\documentclass[12pt,reqno]{amsart}
\usepackage[a4paper,margin=35mm]{geometry}
\usepackage{amssymb}
\usepackage{amsmath}
\usepackage{amsfonts}
\usepackage{amsthm}
\usepackage{amsrefs}
\usepackage{pstricks}
\usepackage{graphicx}
\usepackage{enumerate}        

\usepackage[colorlinks]{hyperref}
\usepackage[english]{babel}
\usepackage[utf8]{inputenc}
\usepackage[T1]{fontenc}








\newcommand{\ga}{\ensuremath{\gamma}}



\newcommand{\lam}{\ensuremath{\lambda}}


\newcommand{\Ga}{\Gamma}






\newcommand{\m}[1]{{ \bf {#1} }}
\newcommand{\vty}[1]{\ensuremath{\mathcal{#1}}}		

\newcommand{\cng}{{\rm Con}}	


\newcommand{\ra}{\to}

\newcommand{\lan}{\langle}
\newcommand{\ran}{\rangle}
\newcommand{\jn}{\vee}
\newcommand{\mt}{\wedge}
\newcommand{\rd}{\slash}
\newcommand{\ld}{\backslash}
\newcommand{\ldm}{\backslash_{_{_{-}}}}
\newcommand{\rdm}{\slash\!\!_{_{_{-}}}}
\newcommand{\pd}{\cdot}
\newcommand{\ut}{{\rm e}}
\newcommand{\zr}{f}
\newcommand{\eq}{\approx}

\newcommand{\ua}{\mathop{\uparrow}}
\newcommand{\da}{\hspace{.01in} \downarrow \hspace{-.01in}}
\newcommand{\abs}[1]{{ |{#1}|}}

\newcommand{\cs}[1]{{\mathcal C}(\m {#1})}      
\newcommand{\nc}[1]{{\mathcal{NC}}(\m {#1})}    


\newcommand{\Rl}{\mathcal{RL}}
\newcommand{\Irl}{\mathcal{IRL}}
\newcommand{\Prl}{\mathcal{PRL}}

\newcommand{\Semrl}{\mathcal{S}em\mathcal{RL}}

\newcommand{\Semirl}{\mathcal{S}em\mathcal{IRL}}
\newcommand{\LG}{\mathcal{LG}}



\newcommand{\N}{ {\mathbb N} }

\newcommand{\R}{ {\mathbb R} }
\newcommand{\C}{ {\rm C} }
\newcommand{\NC}{ {\rm NC} }
\newcommand{\K}{ {\mathcal K} }


\newtheorem{theorem}{Theorem}[section]
\newtheorem{lemma}[theorem]{Lemma}

\newtheorem{corollary}[theorem]{Corollary}

\newtheorem{proposition}[theorem]{Proposition}

\newtheorem{problem}[theorem]{Problem}
\newtheorem{example}{Example}[section]

\numberwithin{equation}{section}


\newcommand{\prp}[1]{{\sf #1}}


\title[The Conrad Program]{The Conrad Program: From $\ell$-Groups to Algebras of Logic}

\author[M. Botur]{Michal Botur}
\address{Department of Algebra and Geometry\\
Palack\'{y} University in Olomouc \\
Czech Republic}
\email{michal.botur@upol.cz}

\author[J. K\"{u}hr]{Jan K\"{u}hr}
\address{Department of Algebra and Geometry\\
Palack\'{y} University in Olomouc \\
Czech Republic}
\email{jan.kuhr@upol.cz}

\author[L. Liu]{Lianzhen Liu}
\address{School of  Science\\
Jiangnan Universsity \\
China}
\email{lianzhen2003@yahoo.com}

\author[C. Tsinakis]{Constantine Tsinakis}
\address{Department of Mathematics\\
Vanderbilt University\\
U.S.A.}
\email{constantine.tsinakis@vanderbilt.edu}

\keywords{Residuated lattices, Hamiltonian residuated lattices, 
Lattice-ordered groups, GMV-algebras}

\subjclass
[2000] {Primary:
06F05, 
Secondary:
06D35, 
06F15, 
03G10,
03B47, 
08B15, 
}


\begin{document}

\begin{abstract}
A number of research articles have established the significant role of lattice-ordered groups ($\ell$-groups) in logic.  The fact that underpins these studies is the realization that important algebras of logic may be viewed as $\ell$-groups with a modal operator. These connections are just the tip of the iceberg. The purpose of the present article is to lay the groundwork for, and provide  significant initial contributions to, the development of a Conrad type approach to the study of algebras of logic. The term \emph{Conrad Program} refers to Paul Conrad's approach to the study of $\ell$-groups, which analyzes the structure of individual or classes of $\ell$-groups by primarily using strictly lattice theoretic properties of their lattices of convex $\ell$-subgroups. The present article demonstrates that large parts of the Conrad Program can be profitably extended in the setting of $\ut$-cyclic residuated lattices -- that is residuated lattices that satisfy the identity $x\ld \ut\eq \ut\rd x$. An indirect benefit of this work is the introduction of new tools and techniques in the study of algebras of logic, and the enhanced role of the lattice of convex subalgebras of a residuated lattice.
\end{abstract}

\noindent
This is a preprint of an article published in Journal of Algebra.\\
The final publication is available at\\
\url{http://dx.doi.org/10.1016/j.jalgebra.2015.10.015}.

\newpage

\maketitle

\section{Introduction}\label{intro}

There have been a number of studies providing compelling evidence of the importance of lattice-ordered groups ($\ell$-groups) in the study of  algebras of logic\footnote{We use the term \emph{algebra of logic} to refer to residuated lattices -- algebraic counterparts of propositional substructural logics -- and their reducts. Substructural logics are non-classical logics that are weaker than classical logic, in the sense that they may lack one or more of the structural rules of contraction, weakening and exchange in their Genzen-style axiomatization. These logics encompass a large number of non-classical logics related to computer science (linear logic), linguistics (Lambek Calculus), philosophy (relevant logics), and many-valued reasoning.}. For example, a fundamental result \cite{Mun86} in the theory of MV-algebras  is the categorical equivalence between the category of MV-algebras and the category of unital Abelian $\ell$-groups. Likewise, the non-commutative generalization of this result in \cite{Dvu02} establishes a categorical equivalence between the category of pseudo-MV-algebras and the category of unital  $\ell$-groups. Further, the generalization of these two results in \cite{MT10}
shows that one can view GMV-algebras as $\ell$-groups with a suitable modal operator. Likewise, the  work in \cite{MT10} offers a new paradigm for the study of various classes of cancellative residuated lattices by viewing these structures as $\ell$-groups with a suitable modal operator (a conucleus). 

The preceding connections are just the tip of the iceberg. Here we lay the groundwork for, and provide some significant initial contributions to, developing a Conrad type approach to the study of algebras of logic. The term \emph{Conrad Program} traditionally refers to Paul Conrad's approach to the study of $\ell$-groups, which analyzes the structure of individual $\ell$-groups, or classes of $\ell$-groups, by primarily using strictly lattice theoretic properties of their lattices of convex $\ell$-subgroups. Conrad's papers \cites{Con60, Con61, Con65, Con68} in the 1960s pioneered this approach and amply demonstrated its usefulness. A survey of the most important consequences of this approach to $\ell$-groups can be found in \cite{ACM89}, while complete proofs for most of the surveyed results can be found in Conrad's ``Blue Notes'' \cite{Con70}, as well as in \cite{AF88} and \cite{Dar95}.

The present article and its forthcoming successors will demonstrate that large parts of the Conrad Program can be profitably extended in the setting of $\ut$-cyclic residuated lattices -- that is residuated lattices that satisfy the identity $x\ld \ut\eq \ut\rd x$. A byproduct of this work is the addition of new tools and techniques for studying  algebras of logic. 

Let us now be  more specific about the structure and results of this paper. In Section~\ref{basicfacts}, we recall some necessary background from the theory of residuated lattices. In Section~\ref{convexsubs}, we study in detail the lattice $\cs{L}$ of convex subalgebras of an $\ut$-cyclic residuated lattice $\m L$. The main result of this section is Theorem~\ref{CS8} which asserts that $\cs{L}$ is an algebraic distributive lattice whose compact elements -- the principal convex subalgebras -- form a sublattice. Further, Lemma~\ref{CS4} of the same section provides an element-wise description of the convex subalgebra generated by an arbitrary subset of $\m L$. The development of the material of this section is a natural extension of the techniques used to study the lattice of normal convex subalgebras, as developed in~\cite{BT03}. 

In Section~\ref{polarsandprimes}, we consider the role of prime convex subalgebras (meet-irredu\-cible elements) and polars (pseudocomplements) in $\cs{L}$. For example, it is shown (Lemma~\ref{lemma<<primes}) that if the $\ut$-cyclic residuated lattice $\m L$ satisfies the left or right  prelinearity law,  then a convex subalgebra $H$ of $\m L$ is prime iff the set of all convex subalgebras exceeding $H$ is a chain under set-inclusion. Hence, see also Proposition~\ref{relatnormal}, the lattice $\K(\cs{L})$ of principal convex subalgebras of $\m L$ is a relatively normal lattice. Further, a description of minimal prime convex subalgebras in terms of polars is provided by Proposition~\ref{minprime2}. 

Section~\ref{section<<semilinearity} is concerned with semilinearity, and more precisely with the question of whether this property can be ``captured'' in the lattice of convex subalgebras. We prove in Theorem~\ref{semilinear} that a variety  $\vty{V}$ of $\ut$-cyclic residuated lattices that satisfy either of the prelinearity laws is semilinear iff for every $\m L\in\vty{V}$, all (principal) polars in $\m L$ are normal.
Equivalently, all minimal prime convex subalgebras of $\m L$ are normal. Further,
Theorem~\ref{newchar} presents a characterization of the variety $\Semrl$ of semilinear residuated lattices that does  not involve multiplication. 

In Section \ref{section<<hamiltonian}, we study Hamiltonian residuated lattices, that is, residuated lattices in which  convex subalgebras are normal. 
Theorem~\ref{thm<<hamiltonian} characterizes Hamiltonian varieties of \ut-cyclic residuated lattices; more precisely,  a class closed with respect to direct products is Hamiltonian iff it satisfies certain identities.
While it is known that  there exists a largest Hamiltonian variety of $\ell$-groups, viz., the variety of weakly Abelian $\ell$-groups, Theorem~\ref{thm<<hamiltonian2} establishes that  this is not the case for Hamiltonian varieties of $\ut$-cyclic residuated lattices. 

In Section \ref{section<<gmvalgebras} we ask whether the lattice of convex subalgebras of a residuated lattice that satisfies either prelinearity law is isomorphic to the lattice of convex  $\ell$-subgroups of an $\ell$-group. The main result of the section  provides an affirmative answer to the question when the residuated lattice is a GMV-algebra.

Section~\ref{section<<concludingremarks} offers a few suggestions for future developments in the subject.


\section{Basic notions}\label{basicfacts}

In this section we briefly recall basic facts about the varieties of residuated lattices, referring to~\cite{BT03},~\cite{JT02}, ~\cite{GJKO07}, and ~\cite{MPT10} for further details. These varieties provide algebraic semantics for substructural logics, and encompass other important classes of algebras such as $\ell$-groups. 


A \emph{residuated lattice} is an algebra $\m{L} = (L,\cdot ,\ld,\rd,\jn,\mt ,\ut)$ satisfying:
\begin{itemize} 
\item[(a)]	$(L,\cdot,\ut)$ is a monoid;
\item[(b)] 	$(L,\jn,\mt)$ is a lattice with order $\le$; and
\item[(c)] 	$\ld$ and $\rd$ are binary operations satisfying the residuation property: 
		\[
		x\cdot y\leq z \quad{\rm iff }\quad y \leq x\ld z \quad {\rm iff }\quad x\leq z \rd y.
		\]
\end{itemize}
We refer to the operations $\ld$ and $\rd$ as the
{\it left residual} and {\it right residual} of $\cdot$, respectively.
As usual, we write $xy$ for $x\cdot y$ and adopt the convention that, in
the absence of parenthesis, $\cdot$ is performed first, followed by
$\ld$ and $\rd$, and finally by $\jn$ and $\mt$.

Throughout this paper, the class of residuated lattices will be
denoted by $\Rl$. It is easy to see that the equivalences that define residuation can be expressed in terms of finitely many equations and thus $\Rl$ is a finitely based variety (see~\cite{BT03},~\cite{BCGJT03}).   

The existence of residuals has the following basic consequences,
which will be used in the remainder of the paper without explicit
reference.

\begin{lemma}\label{residuals}
Let $\m L$ be a residuated lattice.
\begin{enumerate}
\item[\rm{(1)}]
The multiplication preserves all existing joins in each argument;
i.e., if $\bigvee X$ and $\bigvee Y$ exist for $X, Y\subseteq L,$ then
$\bigvee_{x\in X, y\in Y}(xy)$ exists and
\[
\Big(\bigvee X\Big)\Big(\bigvee Y\Big)=\bigvee_{x\in X, y\in Y}(xy).
\]
\item[\rm{(2)}]
The residuals preserve all existing meets in the numerator, and
convert existing joins to meets in the denominator, i.e.,
if $\bigvee X$ and $\bigwedge Y$ exist for $X,Y\subseteq L$, then for
any $z\in L$, $\bigwedge_{x\in X}(x\ld z)$ and
$\bigwedge_{y\in Y}(z\ld y)$ exist and
\[
\Big(\bigvee X\Big)\Big\backslash z=\bigwedge_{x\in X}(x\ld z)
\quad\text{ and }\quad
z\,\Big\backslash\Big(\bigwedge Y\Big)=\bigwedge_{y\in Y}(z\ld y)\text{,}
\]
and the same for $\rd$.

\item[\rm{(3)}] The following identities (and their mirror images)\footnote{(a), (b) and (c)  are expressed as inequalities, but are clearly equivalent to identities.}
hold in $\m L$:
\begin{enumerate}
\item[\rm{(a)}] $(x\ld y)z\leq x\ld yz$;
\item[\rm{(b)}] $x\ld y\leq zx\ld zy$;
\item[\rm{(c)}] $(x\ld y)(y\ld z)\leq x\ld z$;
\item[\rm{(d)}] $xy\ld z=y\ld (x\ld z)$;
\item[\rm{(e)}] $x\ld (y/ z)=(x\ld y)/ z$;
\item[\rm{(f)}] $x(x\ld x)=x$;
\item[\rm{(g)}] $(x\ld x)^2=x\ld x$.
\end{enumerate}
\end{enumerate}
\end{lemma}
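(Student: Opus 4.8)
The plan is to extract everything from the residuation equivalences of clause (c), which say exactly that, for each fixed $a\in L$, the map $x\mapsto ax$ is order-preserving with right adjoint $z\mapsto a\ld z$, and $x\mapsto xa$ is order-preserving with right adjoint $z\mapsto z\rd a$. Before anything else I would record the routine facts that follow instantly: multiplication is order-preserving in each argument, $\ld$ and $\rd$ are order-preserving in the numerator and order-reversing in the denominator, and the counit/unit inequalities $x(x\ld y)\le y$, $(y\rd x)x\le y$, $\ut\le x\ld x$, $\ut\le x\rd x$ hold (the last two because $x\ut=x\le x$ and $\ut x=x\le x$). These are used silently throughout.

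For (1): monotonicity gives $xy\le(\bigvee X)(\bigvee Y)$ for all $x\in X$, $y\in Y$, so the right-hand side is an upper bound of $\{xy : x\in X,\ y\in Y\}$. If $z$ is any other upper bound, then $xy\le z$ for all $y\in Y$ yields $y\le x\ld z$, hence $\bigvee Y\le x\ld z$, i.e. $x(\bigvee Y)\le z$; since this holds for all $x\in X$ we get $x\le z\rd(\bigvee Y)$, hence $\bigvee X\le z\rd(\bigvee Y)$, i.e. $(\bigvee X)(\bigvee Y)\le z$. Thus $(\bigvee X)(\bigvee Y)=\bigvee_{x\in X,\,y\in Y}xy$, and the argument for the other variable is symmetric. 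For (2): $(\bigvee X)\ld z$ is a lower bound of $\{x\ld z:x\in X\}$ by antitonicity of the denominator, and if $w\le x\ld z$ for every $x\in X$ then $xw\le z$, so $x\le z\rd w$, so $\bigvee X\le z\rd w$, so $w\le(\bigvee X)\ld z$; hence $(\bigvee X)\ld z$ is the greatest lower bound. Dually, $z\ld(\bigwedge Y)$ is a lower bound of $\{z\ld y:y\in Y\}$ by monotonicity of the numerator, and $w\le z\ld y$ for all $y\in Y$ iff $zw\le\bigwedge Y$ iff $w\le z\ld(\bigwedge Y)$. The statements for $\rd$ are proved the same way.

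For (3), each item is a one-line consequence of residuation and the inequalities recorded above. (a) From $x(x\ld y)\le y$, multiply on the right by $z$ to get $x(x\ld y)z\le yz$, then residuate: $(x\ld y)z\le x\ld yz$. (b) From $x(x\ld y)\le y$, multiply on the left by $z$ to get $zx(x\ld y)\le zy$, then residuate: $x\ld y\le zx\ld zy$. (c) $x(x\ld y)(y\ld z)\le y(y\ld z)\le z$, so $(x\ld y)(y\ld z)\le x\ld z$. (d) and (e) are currying identities: for every $w$, $w\le xy\ld z\Leftrightarrow xyw\le z\Leftrightarrow yw\le x\ld z\Leftrightarrow w\le y\ld(x\ld z)$, and similarly $w\le x\ld(y\rd z)\Leftrightarrow xwz\le y\Leftrightarrow w\le(x\ld y)\rd z$, so the two sides coincide. (f) $x(x\ld x)\le x$, and since $\ut\le x\ld x$ we get $x=x\ut\le x(x\ld x)$; hence equality. (g) Put $a=x\ld x$; then $xa=x$ by (f), so $xaa=xa\le x$, giving $aa\le x\ld x=a$, while $\ut\le a$ gives $a=\ut a\le aa$; hence $a^2=a$. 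The mirror images follow by the evident left--right symmetry of the axioms. I do not expect a genuine obstacle here; the only thing demanding care is consistently tracking the asymmetry between $\ld$ and $\rd$ (equivalently, whether a given multiplication is performed on the left or on the right), which is precisely why I would fix the monotonicity and counit conventions up front so that the rest reads as a list of one-line derivations.
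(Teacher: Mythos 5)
Your proof is correct; every step follows cleanly from the adjunction form of the residuation property, and the preliminary monotonicity and counit facts you record up front are exactly what is needed. The paper itself offers no proof of this lemma (it is stated as a list of ``basic consequences'' of the existence of residuals, to be used without further reference), and your argument is the standard one that any such proof would take.
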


We will have the occasion to consider pointed residuated lattices. A \emph{pointed residuated lattice}  is  an algebra  $\m{L} = (L,\cdot ,\ld,\rd,\jn,\mt ,\ut,  \zr)$ of signature $\lan 2, 2, 2, 2,$ $2, 0, 0\ran$ such that $(L,\cdot ,\ld,\rd,\jn,\mt ,\ut)$ is a residuated lattice. In other words, a pointed residuated lattice is simply a residuated lattice with an extra constant $\zr$. Residuated lattices may be identified with pointed residuated lattices satisfying the identity 
$\ut \eq \zr$. 
The variety $\Prl$ of pointed residuated lattices provides algebraic semantics for the Full Lambek calculus (pointed residuated lattices are therefore often referred to also as FL-algebras) and its subvarieties  correspond to substructural logics.
We also define here  a \emph{bounded residuated lattice} to be a pointed residuated lattice with bottom element $\zr$ (and therefore also, top element $\zr\ld\zr$), emphasizing that ``bounded'' implies that the constant $\zr$ representing the bottom element is in the signature. 
A  {(pointed)} residuated lattice is said to be {\em integral} if $\ut$ is its top element. {Note, however, that in a bounded residuated lattice, $\ut$ may not be the top element, 
and, conversely, an integral pointed residuated lattice may not be bounded.}
A {(pointed)} residuated lattice is \emph{commutative} if it 
satisfies the equation $xy \eq yx$, in which case, $x\ld y$ and $y\rd x$ coincide and are denoted by $x\ra y$. We call a {(pointed)} residuated lattice \emph{$\ut$-cyclic} if it satisfies the identity  $\ut\rd x\eq x\ld\ut$. In this paper we mostly consider $\ut$-cyclic {(pointed)} residuated lattices.

Important particular classes of $\ut$-cyclic residuated lattices are $\ell$-groups, GBL-algebras and GMV-algebras. The latter two classes will be discussed in some detail in Section \ref{section<<gmvalgebras}.

The variety of $\ell$-groups occupies a very special place among varieties of residuated lattices. Recall that an element $a\in L$ is said to be \emph{invertible} if $(\ut\rd a)$ $a=\ut=a(a\ld \ut)$. This is of course true if and only if $a$ has a (two-sided) inverse $a^{-1}$, in which case $\ut\rd a=a^{-1}=a\ld \ut$. The structures in which every element is invertible are precisely the $\ell$-groups. It should be noted that an $\ell$-group is traditionlly
defined in the literature as an algebra 
$\m G = (G, \jn, \mt, \cdot, { }^{-1}, \ut)$ such that $(G, \jn, \mt)$ is a
lattice, $(G, \cdot , { }^{-1}, \ut)$ is a group, and multiplication
is order preserving -- or, equivalently, it distributes over the
lattice operations (see~\cite{AF88},~\cite{Gla99}). 
The variety of $\ell$-groups is term equivalent to the subvariety of $\Rl$ defined by the
equations $(\ut \rd x) x \eq \ut \eq x(x\ld \ut);$
the term equivalence is given
by $x^{-1}=\ut \ld x$ and $x \rd y = x y^{-1}, \: x \ld y = x
^{-1}y$. We denote by $\LG$ the aforementioned subvariety
and refer to its members as $\ell$\emph{-groups}. 

Let $\m{L}$ be a residuated lattice.  If $F\subseteq L$, we write $F^-$ for the set of ``negative'' elements of $F$; i.e., 
$F^-=\{x \in F \colon  x\leq \ut\}$. The \emph{negative cone} of $\m{L}$ is the algebra $\m{L}^{-}$ with domain $L^-$, 
with monoid and lattice operations  the restrictions to $L^{-}$ of the corresponding operations in $\m{L}$, and with the residuals $\ldm$  and $\rdm$ defined by
\[
x\ldm y=(x\ld y)\mt \ut \quad \text{and}\quad y \rdm x=(y\rd x)\mt \ut,
\]
where $\ld $ and $\rd$ denote the residuals in $\m{L}$.


\section{Convex subalgebras}\label{convexsubs}

In many respects, convex $\ell$-subgroups play a more important role than $\ell$-ideals (normal  convex $\ell$-subgroups) in the study of $\ell$-groups, for the lattice of convex $\ell$-subgroups of a given $\ell$-group bears significant information about the $\ell$-group. Some investigations similar to the aforementioned ``Conrad program'' have been carried out for prefilters (or filters) of certain integral residuated lattices and for convex subalgebras of GBL-algebras -- more precisely, for ideals of DR$\ell$-monoids. 
In the context of $\ut$-cyclic residuated lattices, convex subalgebras are a natural generalization of convex $\ell$-subgroups of $\ell$-groups as well as prefilters (filters) of integral residuated lattices. Hence we will focus on convex subalgebras of $\ut$-cyclic residuated lattices and on the study of $\ut$-cyclic residuated lattices via their lattices of convex subalgebras.

In this section, we describe some basic properties of convex subalgebras of $\ut$-cyclic residuated lattices and we show that the lattice of convex subalgebras of $\m L$ is an algebraic distributive lattice which is isomorphic to a sublattice of the congruence lattice of the lattice reduct of $\m L$.

A subset $C$ of a poset $\m P = (P,\leq)$ is \emph{order-convex} (or simply
\emph{convex}) in $\m P$ if, whenever $a,b\in C$ with $a\leq b$, then
$\ua a \ \cap \da\! b \subseteq C$. For a  residuated lattice $\m L$, we write $\cs{L}$ for the set of all convex
subalgebras of $\m L$, partially ordered by set-inclusion.  It is easy to see that the
intersection of any family of convex subalgebras of $\m L$ is again a convex subalgebra of
$\m L$, and so is the union of any up-directed family of convex 
subalgebras. Thus, $\cs{L}$ is an algebraic closure system.

For any $S \subseteq L$, we  let $\C[S]$ denote the smallest convex subalgebra of $\m L$ containing $S$.  As is customary, we call $\C[S]$ the convex subalgebra \emph{generated} by $S$ and  let $\C[a] = \C[\{a\}]$. We refer to $\C[a]$ as the \emph{principal} convex subalgebra of $\m L$ generated by the element $a$.  We say that a convex subalgebra of $\m L$ is \emph{finitely generated} provided it is generated by some finite subset of $L$. 

Given a residuated lattice $\m L$ and an element $x\in L$, the \emph{absolute value} of $x$ is the element 
\[
\abs{x}=x\mt (\ut\rd x) \mt \ut.
\]
If $X\subseteq L$, we set $\abs{X}=\{\abs{x}\colon x\in X\}$.
We note that in the case of GBL-algebras, $\abs{x}=x\mt (\ut\rd x)$.

The proof of the following lemma is routine:

\begin{lemma}\label{absvalue}
Let $\m L$ be an $\ut$-cyclic residuated lattice, let  $x\in L$, and let $a\in L^-$. The following conditions hold:
\begin{enumerate}
\item[\rm{(1)}] $x\leq\ut$ iff $\abs{x}=x$;
\item[\rm{(2)}] $\abs{x}\leq x\leq \abs{x}\ld\ut$;
\item[\rm{(3)}]  $\abs{x}=\ut$ iff $x=\ut$;
\item[\rm{(4)}]  $a\leq x\leq a\ld\ut$ iff $a\leq \abs{x}$; and
\item[\rm{(5)}]  if $H\in\cs{L}$, then $x\in H$ iff $\abs{x}\in H$.
\end{enumerate}
\end{lemma}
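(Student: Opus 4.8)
The plan is to verify each of the five equivalences in turn, using only the residuation property, Lemma~\ref{residuals}, and the defining identity $\ut\rd x\eq x\ld\ut$ of \ut-cyclic residuated lattices. First I would dispose of (1): since $\abs{x}=x\mt(\ut\rd x)\mt\ut$, if $x\le\ut$ then $\ut\rd x\ge\ut\rd\ut=\ut\ge x$ (using that $\ut\rd x$ is antitone in $x$ together with $x\ld\ut=\ut\rd x$), whence $\abs{x}=x\mt\ut\mt x=x$ after absorbing the redundant terms; conversely $\abs{x}=x$ forces $x\le\ut$ directly from the conjunct $\ut$. Part (3) is the special case $x=\ut$ of the reasoning in (1) together with the fact that $\abs{\ut}=\ut\mt(\ut\rd\ut)\mt\ut=\ut$.

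Next I would treat (2), which is the technical heart. The left inequality $\abs{x}\le x$ is immediate since $\abs{x}$ is a meet with $x$ as one conjunct. For the right inequality $x\le\abs{x}\ld\ut$, by residuation this is equivalent to $\abs{x}\cdot x\le\ut$, equivalently $\abs{x}\le\ut\rd x=x\ld\ut$. Now $\abs{x}\le x\ld\ut$ follows because $x\ld\ut$ is one of the conjuncts of $\abs{x}$ (recalling $\ut\rd x=x\ld\ut$). So (2) reduces to bookkeeping with the residuation adjunction; the only subtlety is keeping the \ut-cyclic identity in play so that the two residuals can be interchanged freely.

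For (4), the forward direction: if $a\le x\le a\ld\ut$ with $a\in L^-$, then from $x\le a\ld\ut$ and residuation we get $ax\le\ut$, hence $a\le x\ld\ut=\ut\rd x$; combined with $a\le x\le\ut$ (the last since $a\le\ut$ and $x\le a\ld\ut\le\ut\ld\ut=\ut$, using $a\le\ut$), we conclude $a\le x\mt(\ut\rd x)\mt\ut=\abs{x}$. Conversely, if $a\le\abs{x}$ then $a\le x$ from the first conjunct, and $a\le\ut\rd x=x\ld\ut$ from another conjunct, so $x\le a\ld\ut$ by residuation. Finally (5): one direction is trivial since $\abs{x}\le x\le\abs{x}\ld\ut$ by (2), and a convex subalgebra containing $x$ and closed under $\ld$ and $\ut$ contains $\abs{x}=x\mt(\ut\rd x)\mt\ut$; the other direction uses convexity — from $\abs{x}\in H$ we have $\abs{x}\le\ut\in H$, and $\abs{x}\le x\le\abs{x}\ld\ut$ with $\abs{x}\ld\ut\in H$ (closure under $\ld$ and $\ut$), so $x$ lies between two elements of $H$ and hence $x\in H$ by convexity. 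I do not expect any genuine obstacle here; the one point requiring care is the systematic use of the \ut-cyclic identity to pass between $\ut\rd x$ and $x\ld\ut$, and making sure in (4) and (5) that all the needed closure properties (under $\ld$, under $\ut$, and convexity) are invoked explicitly.
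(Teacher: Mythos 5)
The paper omits the proof of this lemma as routine, and your verification is essentially the intended one: unfold $\abs{x}=x\mt(\ut\rd x)\mt\ut$ and apply the residuation adjunction together with $\ut$-cyclicity. Items (1), (2) and (5) are correct as written.

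Two local points need repair. In the forward direction of (4), your justification of $x\le\ut$ via ``$x\le a\ld\ut\le\ut\ld\ut=\ut$'' is wrong: residuals are antitone in the denominator, so $a\le\ut$ gives $a\ld\ut\ge\ut\ld\ut=\ut$, not the reverse, and in fact $x\le\ut$ is false in general (in the $\ell$-group $\Z$ take $a=-1$ and $x=1$; then $a\le x\le a\ld\ut$ but $x\not\le\ut$). Fortunately you never need $x\le\ut$: to get $a\le x\mt(\ut\rd x)\mt\ut$ the third conjunct only requires $a\le\ut$, which is exactly the hypothesis $a\in L^-$, so the argument survives once that detour is deleted. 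Second, in (3) your remark only covers the direction $x=\ut\Rightarrow\abs{x}=\ut$; the converse is not a ``special case of (1).'' It does follow quickly: $\abs{x}=\ut$ forces $\ut\le x$ and $\ut\le\ut\rd x$, and the latter gives $\ut\cdot x\le\ut$, i.e.\ $x\le\ut$, whence $x=\ut$. With these two repairs the proof is complete.
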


In what follows, given a residuated lattice $\m L$ and a subset $S\subseteq L$, we write $ \lan S \ran$ for the submonoid of $\m L$ generated by $S$. Thus, $x\in \lan S \ran$ if and only if there exist elements $s_1,\dots, s_n\in S$ such that $x=s_1\cdots s_n$.

\begin{lemma}\label{CS4} 
Suppose $\m L$ is an $\ut$-cyclic residuated lattice and let $S \subseteq L$. 
Then 
\begin{align*}
\C[S]  = \C[\abs{S}] & = \{ x\in L \colon h \leq x \leq h \ld \ut, \text{ for some } h \in \lan\abs{S}\ran \} \\
             & = \{ x\in L \colon h \leq \abs{x}, \text{ for some } h \in \lan\abs{S}\ran \}. 
\end{align*}
\end{lemma}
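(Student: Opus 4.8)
The strategy is to show that the set on the right-hand side, call it $T = \{x\in L \colon h\leq x\leq h\ld\ut \text{ for some } h\in\lan\abs{S}\ran\}$, is a convex subalgebra containing $S$, and that it is contained in every convex subalgebra containing $S$. The equality $\C[S]=\C[\abs{S}]$ is immediate from Lemma~\ref{absvalue}(5), since a convex subalgebra contains $x$ iff it contains $\abs{x}$; the third line equals $T$ by Lemma~\ref{absvalue}(4). So the real work is proving $\C[S]=T$. First I would observe $S\subseteq T$: for $x\in S$, we have $\abs{x}\in\lan\abs{S}\ran$ and $\abs{x}\leq x\leq\abs{x}\ld\ut$ by Lemma~\ref{absvalue}(2). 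Next I would check $T\subseteq C$ for any convex subalgebra $C\supseteq S$: given $x\in T$ with witness $h=\abs{s_1}\cdots\abs{s_n}$, each $\abs{s_i}\in C$ (using Lemma~\ref{absvalue}(5)), so $h\in C$ since $C$ is a subalgebra, hence $h\ld\ut\in C$, and then $x\in C$ by convexity since $h\leq x\leq h\ld\ut$ with $h,h\ld\ut\in C$ (note $h\leq\ut\leq h\ld\ut$).

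**The main obstacle** is verifying that $T$ is itself a convex subalgebra — this is where the $\ut$-cyclic hypothesis and the basic residuation identities in Lemma~\ref{residuals} get used. Convexity of $T$ is easy: if $h\leq x\leq x'\leq h'\ld\ut$ with $h,h'\in\lan\abs{S}\ran$ and $a\leq x''\leq b$ with $a,b\in T$... actually more directly, if $a\leq x\leq b$ with $a,b\in T$, pick witnesses $h_a\leq a$ and $b\leq h_b\ld\ut$; then $h_ah_b\leq h_a\leq a\leq x\leq b\leq h_b\ld\ut\leq (h_ah_b)\ld\ut$ (using $h_a\leq\ut$ and part (3)(b)/(d) of Lemma~\ref{residuals} together with integrality of the negative elements $h_a$), and $h_ah_b\in\lan\abs{S}\ran$, so $x\in T$. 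Closure under $\jn$ and $\mt$: if $h_1\leq x_1\leq h_1\ld\ut$ and $h_2\leq x_2\leq h_2\ld\ut$, then $h_1h_2\leq x_1\mt x_2\leq x_1\jn x_2$ and $x_1\jn x_2\leq (h_1\ld\ut)\jn(h_2\ld\ut)\leq (h_1h_2)\ld\ut$ (again exploiting that the $h_i$ are negative so $h_1h_2\leq h_i$), putting both $x_1\mt x_2$ and $x_1\jn x_2$ in $T$ with witness $h_1h_2$.

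**Closure under the monoid and residual operations** is the delicate part. For multiplication, with witnesses $h_1,h_2$ as above, $h_1h_2\leq x_1x_2$ is clear, while $x_1x_2\leq(h_1\ld\ut)(h_2\ld\ut)$ and one must bound this by $(h_1h_2)\ld\ut$; here I would use $\ut$-cyclicity to rewrite $h_i\ld\ut=\ut\rd h_i$ and the identity $(\ut\rd h_1)(\ut\rd h_2)\leq \ut\rd(h_2h_1)$-type inequalities, combined with commutativity of negative idempotent-like behavior — more carefully, use Lemma~\ref{residuals}(3)(c) and (d). For the residuals $x_1\ld x_2$ and $x_1\rd x_2$: one shows $\abs{x_1\ld x_2}\geq$ some product of $\abs{s_i}$'s, which via Lemma~\ref{absvalue}(4) amounts to sandwiching $x_1\ld x_2$ between $h$ and $h\ld\ut$ for a suitable $h\in\lan\abs{S}\ran$; the lower bound uses that $h_1\leq x_1$ and $x_2\leq h_2\ld\ut$ give $x_1\ld x_2\geq (h_1\ld\ut)\cdot(\text{stuff})$, and crucially $\ut$-cyclicity is what makes the left and right residual estimates symmetric so that $\abs{x_1\ld x_2}$ (which involves both $x_1\ld x_2$ and $\ut\rd(x_1\ld x_2)$) is controlled. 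I expect this residual computation, juggling the mirror-image identities of Lemma~\ref{residuals}(3) under the $\ut$-cyclic identity, to be the genuinely fiddly step; everything else is bookkeeping with Lemma~\ref{absvalue}.
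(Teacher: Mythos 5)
Your overall strategy is exactly the paper's: show the sandwich set $T$ is a convex subalgebra containing $S$ and contained in every convex subalgebra containing $S$, with the first and third displayed descriptions reconciled via Lemma~\ref{absvalue}. The reduction steps, the convexity argument, and the closure under $\jn$ and $\mt$ (with witness $h_1h_2$) all match. But the two steps you yourself flag as delicate are where the proposal has genuine gaps. For multiplication, the inequality you propose, $(h_1\ld\ut)(h_2\ld\ut)\leq (h_1h_2)\ld\ut$, is not justified and is the wrong bound in the noncommutative setting: what one actually gets is $(h_1\ld\ut)(h_2\ld\ut)\leq (h_2h_1)\ld\ut$ (since $h_2h_1(h_1\ld\ut)(h_2\ld\ut)\leq h_2(h_2\ld\ut)\leq\ut$), with the factors \emph{reversed}. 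Since the lower bound is $h_1h_2\leq x_1x_2$ and the upper bound involves $h_2h_1$, you do not yet have a single witness in $\lan\abs{S}\ran$; the paper resolves this by taking the symmetric element $(h_1h_2)(h_2h_1)$ as the witness, which lies below both $h_1h_2$ and $h_2h_1$. This repair is small but necessary, and your sketch as written would not close.

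For the residuals the proposal does not actually produce an argument. The hint ``$x_1\ld x_2\geq(h_1\ld\ut)\cdot(\text{stuff})$'' cannot be the right shape of a lower bound, since a witness must be a product of the \emph{negative} generators $\abs{s_i}$, whereas $h_1\ld\ut\geq\ut$. The actual computation (for $y\ld x$ with $h\leq x\leq h\ld\ut$ and $k\leq y\leq k\ld\ut$) is: $\ut$-cyclicity turns $y\leq k\ld\ut$ into $y\leq\ut\rd k$, i.e.\ $yk\leq\ut$, whence $ykh\leq h\leq x$ and so $hkh\leq kh\leq y\ld x$; for the upper bound, $k\leq y$ and $x\leq h\ld\ut$ give $y\ld x\leq k\ld(h\ld\ut)=hk\ld\ut\leq hkh\ld\ut$, yielding the single witness $hkh$ (and symmetrically $khk$ for $x\rd y$). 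This is precisely the one place where $\ut$-cyclicity is indispensable --- to convert the one-sided upper bound $y\leq k\ld\ut$ into the product inequality $yk\leq\ut$ needed on the other side --- so your instinct that cyclicity enters here is right, but the step itself is the content of the lemma and must be carried out.
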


\begin{proof}
It is clear, in view of the definition of $\abs{.}$ and Condition (2) of Lemma \ref{absvalue}, that $\C[S] = \C[\abs{S}]$. Hence, by Condition (4) of Lemma \ref{absvalue}, it will suffice to prove that 
$\C[\abs{S}]=\{ x\in L \colon h \leq x \leq h \ld \ut, \text{ for some } h \in \lan\abs{S}\ran \}$. 
Set $H=\{ x\in L \colon h \leq x \leq h \ld \ut, \text{ for some } h \in \lan\abs{S}\ran \}$. It is clear that $H \subseteq \C[\abs{S}]$. We complete the proof by showing that $H$ is a convex subalgebra of $\m L$ that contains $\abs{S}$. Let $s \in \abs{S}$. 
Then $s \leq s \leq s \ld \ut$ since $s \leq \ut$, 
and so $\abs{S} \subseteq H$. Let $x,y \in H$. There exist  
$h,k \in\lan \abs{S} \ran\subseteq L^-$ such that
$h \leq x \leq h \ld \ut$ and $k \leq y \leq k \ld \ut$. It  follows that 
$(hk)(kh) \leq hk \leq xy \leq (h \ld \ut)(k \ld \ut) \leq kh
\ld \ut \leq (hk) (kh) \ld \ut$. This shows that $xy \in H$, and whence $H$ is closed under multiplication. Also, $hk \leq h\mt k \leq x\mt y \leq x\jn y  \leq
(h \ld \ut) \jn (k \ld \ut) \leq hk \ld \ut$.  Thus, 
$x \mt y, x \jn y \in H$, and $H$ is closed under the lattice operations. 

To demonstrate closure with respect to residuals, observe that $y\leq k\ld\ut=$ $\ut\rd k$ implies $ykh\leq h\leq x$ whence $hkh\leq kh\leq y\ld x$, and on the other hand, $k\leq y$ and $x\leq h\ld\ut$ imply 
$y\ld x \leq k\ld x \leq k\ld (h\ld\ut)=hk\ld\ut\leq hkh\ld\ut$. Thus $hkh\leq y\ld x\leq hkh\ld\ut$.
Analogously, $khk\leq x\rd y\leq khk\ld\ut$ because 
$y\leq k\ld\ut$ implies $hky\leq h\leq x$, so $khk\leq hk\leq x\rd y$,
and also $x\rd y\leq x\rd k\leq (\ut\rd h)\rd k=$ $\ut\rd kh\leq khk\rd\ut$.

Order-convexity is almost immediate. Indeed, let $z\in L$ such that $x \leq z \leq y$. Then $hk \leq h \leq x \leq z \leq y \leq k \ld \ut \leq hk \ld \ut$. Thus, $z \in H$ and  $H$ is convex.
\end{proof}

The next two corollaries are immediate consequences of the preceding result:

\begin {corollary}\label{CS1} 
Suppose $\m L$ is an $\ut$-cyclic residuated lattice, and let $a\in L$. Then 
\begin{align*}
\C[a] = \C[\abs{a}] &=  \{ x\in L\colon \abs{a}^n \leq x \leq \abs{a}^n \ld \ut, \text{ for some } n\in\N \} \\
& = \{ x\in L\colon \abs{a}^n \leq \abs{x}, \text{ for some } n\in\N \}.
\end{align*}
\end {corollary}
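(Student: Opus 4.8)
The plan is to derive this corollary simply by specializing Lemma~\ref{CS4} to the one-element set $S=\{a\}$. With $S=\{a\}$ we have $\abs{S}=\{\abs{a}\}$, so the equality $\C[a]=\C[\abs{a}]$ is precisely the first equality of Lemma~\ref{CS4}, and the two set-theoretic descriptions of $\C[a]$ will follow as soon as we identify the submonoid $\lan\abs{\{a\}}\ran=\lan\{\abs{a}\}\ran$ occurring there.

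The only computation to carry out is that $\lan\{\abs{a}\}\ran=\{\abs{a}^n\colon n\in\N\}$. Indeed, the submonoid generated by a single element $\abs{a}$ consists of $\ut=\abs{a}^{0}$ together with the powers $\abs{a},\abs{a}^{2},\dots$; and since $\abs{a}\in L^-$ straight from the definition of $\abs{\cdot}$, multiplication by $\abs{a}$ is decreasing, so $\abs{a}^{n+1}=\abs{a}^{n}\abs{a}\leq\abs{a}^{n}\ut=\abs{a}^{n}$. In particular $\abs{a}\leq\ut$, whence $\ut\leq\abs{a}\ld\ut$, so the element $\ut$ already satisfies the condition ``$\abs{a}^{n}\leq x\leq\abs{a}^{n}\ld\ut$'' with $n=1$; thus whether or not $0$ is counted in $\N$ is immaterial. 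Substituting $h=\abs{a}^{n}$ into the two displayed formulas of Lemma~\ref{CS4} then yields
\begin{align*}
\C[a] &=\{x\in L\colon \abs{a}^{n}\leq x\leq\abs{a}^{n}\ld\ut\text{ for some }n\in\N\}\\
      &=\{x\in L\colon \abs{a}^{n}\leq\abs{x}\text{ for some }n\in\N\},
\end{align*}
where the second equality, exactly as in the proof of Lemma~\ref{CS4}, is Condition~(4) of Lemma~\ref{absvalue}.

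I do not expect any genuine obstacle here: the statement is just a reading of Lemma~\ref{CS4} in the principal case. If I wanted to be completely explicit I would additionally remark that, because $\abs{a}^{n+1}\leq\abs{a}^{n}$ and the residual $h\ld\ut$ is antitone in $h$, the condition ``$h\leq x\leq h\ld\ut$'' only becomes easier to satisfy as $n$ grows, so the set on the right is in fact an increasing union indexed by $n$; but this observation is not needed for the equality itself.
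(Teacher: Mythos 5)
Your proposal is correct and follows exactly the route the paper intends: the paper states Corollary~\ref{CS1} as an immediate consequence of Lemma~\ref{CS4}, obtained by taking $S=\{a\}$ and observing that $\lan\{\abs{a}\}\ran$ is the set of powers of $\abs{a}$. Your extra remarks about the unit element and the antitonicity of $h\ld\ut$ are harmless and accurate, so nothing further is needed.
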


\begin{corollary}\label{CS6} 
Let $\m L$ be an $\ut$-cyclic residuated lattice, and let $H$ be a convex subalgebra of $\m L$. Then $H=\C[H^-]$.
\end{corollary}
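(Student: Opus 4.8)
The plan is to establish the two inclusions $\C[H^-]\subseteq H$ and $H\subseteq\C[H^-]$ separately; the first is immediate from minimality and the second is a direct application of the element-wise description in Lemma~\ref{CS4}. For the first inclusion, note that $H^-\subseteq H$ and $H$ is itself a convex subalgebra of $\m L$, so by the very definition of $\C[H^-]$ as the smallest convex subalgebra containing $H^-$ we get $\C[H^-]\subseteq H$.

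For the reverse inclusion, I would first record the simple observation that $\abs{H^-}=H^-$: every $h\in H^-$ satisfies $h\leq\ut$, hence $\abs{h}=h$ by Condition~(1) of Lemma~\ref{absvalue}. Consequently the submonoid $\lan\abs{H^-}\ran$ featuring in Lemma~\ref{CS4} coincides with $\lan H^-\ran$. Now take an arbitrary $x\in H$. By Condition~(5) of Lemma~\ref{absvalue} we have $\abs{x}\in H$, and since $\abs{x}\leq\ut$ this gives $\abs{x}\in H^-$. In particular $\abs{x}$ lies in $\lan H^-\ran=\lan\abs{H^-}\ran$ (as a one-term product), and trivially $\abs{x}\leq\abs{x}$; so the last characterization of $\C[S]$ in Lemma~\ref{CS4}, applied with $S=H^-$, yields $x\in\C[H^-]$. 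Hence $H\subseteq\C[H^-]$.

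Combining the two inclusions gives $H=\C[H^-]$. I do not expect any genuine obstacle here; the only point requiring a little care is to invoke the right form of Lemma~\ref{CS4}, namely the description via $\abs{x}$ rather than via the interval $[h,h\ld\ut]$, since it is precisely that form which makes the membership $x\in\C[H^-]$ fall out at once once we know $\abs{x}\in H^-$.
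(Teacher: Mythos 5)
Your proof is correct and is exactly the argument the paper has in mind: the paper states Corollary~\ref{CS6} as an immediate consequence of Lemma~\ref{CS4}, and your two inclusions (minimality of $\C[H^-]$ for one direction, the $\abs{x}$-form of Lemma~\ref{CS4} together with Lemma~\ref{absvalue}(1),(5) for the other) are precisely how that implication is meant to be unpacked.
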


Recall that a prefilter (sometimes also called filter) of an integral residuated lattice is a non-empty upwards closed subset that is closed under multiplication (e.g. \cite{vA02}). In the integral case, it is quite obvious that prefilters are exactly convex subalgebras. Using absolute values and Lemma \ref{CS4} we can similarly describe convex subalgebras also in the non-integral case.
Prefilters defined below are dual notions to ideals of DR$\ell$-monoids, which, as noted earlier,  are the duals of GBL-algebras; ideal lattices of these algebras were studied in \cite{Kuhr03a}, \cite{Kuhr03b} and \cite{Kuhr05}.

We say that a non-empty subset $P$ of an $\ut$-cyclic residuated lattice $\m L$ is a \emph{prefilter} of $\m L$ if 
\begin{itemize}
\item $xy\in P$ for all $x,y\in P$; and 
\item for all $x\in P$ and $y\in L$, $\abs{x}\leq\abs{y}$ implies $y\in P$.
\end{itemize}
Obviously, if $P$ is a prefilter of $\m L$, then $P^-$ is a prefilter of the integral residuated lattice $\m L^-$.
Further, the second condition entails that $x\in P$ iff $\abs{x}\in P$, for all $x\in L$.

\begin{proposition}
Let $\m L$ be an $\ut$-cyclic residuated lattice, and $P\subseteq L$.
Then $P$ is a prefilter of $\m L$ iff it is a convex subalgebra of $\m L$. 
\end{proposition}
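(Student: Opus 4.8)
The plan is to prove both implications directly, using the element-wise description of convex subalgebras provided by Lemma~\ref{CS4} together with the basic properties of the absolute value from Lemma~\ref{absvalue}.

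First I would show that every convex subalgebra is a prefilter. Suppose $H \in \cs{L}$. Closure under multiplication is immediate since $H$ is a subalgebra. For the second condition, suppose $x \in H$ and $y \in L$ with $\abs{x} \le \abs{y}$. By Lemma~\ref{absvalue}(5), $x \in H$ gives $\abs{x} \in H$; since $\abs{y} \le \abs{x}\ld\ut$ would follow from... more directly, I note $\abs{x} \le \abs{y} \le \ut$ and $\abs{y} \le \ut$, while also $\abs{y} \le \abs{x}\ld\ut$ because $\abs{x}\abs{y} \le \ut$ (both are negative), i.e.\ $\abs{y} \le \abs{x}\ld\ut$; hence $\abs{x} \le \abs{y} \le \abs{x}\ld\ut$, and convexity of $H$ together with $\abs{x}, \abs{x}\ld\ut \in H$ (the latter since $H$ is a subalgebra) yields $\abs{y} \in H$, so $y \in H$ by Lemma~\ref{absvalue}(5). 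Finally $P = H$ is non-empty since $\ut \in H$.

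Next I would prove the converse: every prefilter $P$ is a convex subalgebra. The key observation is that the second prefilter condition forces $P$ to be ``saturated'' in exactly the way described in the last line of Lemma~\ref{CS4}. Concretely, I claim $P = \C[P]$. Clearly $P \subseteq \C[P]$. For the reverse inclusion, take $x \in \C[P] = \C[\abs{P}]$; by Lemma~\ref{CS4} there is $h \in \lan\abs{P}\ran$ with $h \le \abs{x}$. Now $h = \abs{p_1}\cdots\abs{p_n}$ for some $p_1,\dots,p_n \in P$, and since $P$ is closed under multiplication, $p_1\cdots p_n \in P$, whence $\abs{p_1\cdots p_n} \in P$ (using that $x \in P$ iff $\abs{x}\in P$ for prefilters). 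The remaining point is that $\abs{p_1\cdots p_n} \le \abs{p_1}\cdots\abs{p_n} = h$: indeed $\abs{p_i} \le p_i \le \abs{p_i}\ld\ut$ by Lemma~\ref{absvalue}(2), so $\abs{p_1}\cdots\abs{p_n} \le p_1\cdots p_n \le (\abs{p_1}\ld\ut)\cdots(\abs{p_n}\ld\ut)$, and by Lemma~\ref{absvalue}(4) applied with $a = \abs{p_1}\cdots\abs{p_n}$ (a negative element), $a \le p_1\cdots p_n \le a\ld\ut$ forces $a \le \abs{p_1\cdots p_n}$. Thus $h = \abs{p_1}\cdots\abs{p_n} \le \abs{p_1\cdots p_n}$, and combining with $\abs{p_1\cdots p_n} \in P$ and $h \le \abs{x}$ gives $\abs{p_1\cdots p_n} \le \abs{x}$, so by the second prefilter axiom $x \in P$. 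Hence $\C[P] \subseteq P$, so $P = \C[P]$ is a convex subalgebra.

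I expect the main obstacle to be the bookkeeping around absolute values of products — specifically verifying the inequality $\abs{p_1\cdots p_n} \le \abs{p_1}\cdots\abs{p_n}$ and its partial reverse, which is what makes the generating submonoid $\lan\abs{P}\ran$ land back inside $P$. Once that monoid-level comparison is in hand, both directions are short, and the statement follows immediately. An alternative, slightly slicker route for the converse would be to verify the prefilter axioms already guarantee $P$ is a subalgebra (closure under $\jn,\mt$ and the residuals following from the displayed inequalities in the proof of Lemma~\ref{CS4}, since those only used membership of suitable negative elements and the ``$h \le x \le h\ld\ut$'' form) and convex (same displayed inequality), but going through $P = \C[P]$ is cleaner and reuses Lemma~\ref{CS4} directly.
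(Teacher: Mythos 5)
Your first direction (convex subalgebra $\Rightarrow$ prefilter) is correct, if slightly roundabout (you could simply use $\abs{x}\le\abs{y}\le\ut$ and $\ut\in H$). The converse, however, contains a genuine error. You choose $p_1\cdots p_n$ as the element of $P$ to feed into the second prefilter axiom, which would require $\abs{p_1\cdots p_n}\le\abs{x}$; since all you know is $h=\abs{p_1}\cdots\abs{p_n}\le\abs{x}$, you correctly announce that ``the remaining point'' is $\abs{p_1\cdots p_n}\le\abs{p_1}\cdots\abs{p_n}$. But what you then argue via Lemma~\ref{absvalue}(4) is the \emph{reverse} inequality $\abs{p_1}\cdots\abs{p_n}\le\abs{p_1\cdots p_n}$, and from $h\le\abs{p_1\cdots p_n}$ together with $h\le\abs{x}$ one cannot infer $\abs{p_1\cdots p_n}\le\abs{x}$. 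Worse, the inequality you actually need is false in general: a prefilter may contain non-negative elements (e.g.\ $P=L$), and already in the $\ell$-group $\mathbb{Z}$, taking $p_1=1$, $p_2=-1$ gives $\abs{p_1p_2}=\abs{0}=0\nleq -2=\abs{p_1}\abs{p_2}$. (Your derivation of the reverse inequality also has a gap: residuation gives $(\abs{p_1}\ld\ut)(\abs{p_2}\ld\ut)\le\abs{p_2}\abs{p_1}\ld\ut$ with the factors in the denominator \emph{reversed}, not $\le\abs{p_1}\abs{p_2}\ld\ut$; compare the sandwich $(hk)(kh)\le xy\le(hk)(kh)\ld\ut$ in the proof of Lemma~\ref{CS4}.)

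The fix is immediate and is exactly what the paper does: do not pass to $p_1\cdots p_n$ at all. Since $\abs{P}\subseteq P$ (apply the second axiom with $x=p$ and $y=\abs{p}$, noting $\abs{\abs{p}}=\abs{p}$ because $\abs{p}\le\ut$) and $P$ is closed under multiplication, the element $h=\abs{p_1}\cdots\abs{p_n}$ itself lies in $P$; and since $h\le\ut$ we have $\abs{h}=h\le\abs{x}$ by Lemma~\ref{absvalue}(1), so the second prefilter axiom applied to $h$ directly yields $x\in P$. With that one substitution your argument closes and coincides with the paper's proof.
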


\begin{proof}
Let $P$ be a prefilter of $\m L$. If $x\in\C[P]$, then by Lemma \ref{CS4}, there exist $p_1,\dots,p_n\in\abs{P}$ such that $p_1\dots p_n \leq \abs{x}$. 
As noted above, $\abs{P}\subseteq P$, and since $P$ is closed under multiplication, we have that $p_1\dots p_n\in P$. In other words, $p\leq\abs{x}$ for some $p\in P$, whence $x\in P$. Thus $\C[P]\subseteq P$ and we conclude that $P=\C[P]$.

Conversely, recalling Lemma \ref{absvalue} (5), we see that every convex subalgebra is a prefilter.
\end{proof}

The next result shows that every finitely generated member of the algebraic closure system $\cs{L}$ is principal, and that the poset of principal convex subalgebras of $\m L$ is a sublattice of $\cs{L}$, denoted by $\K(\cs{L})$.

\begin{lemma}\label{CS3} 
Let $\m L$ be an $\ut$-cyclic residuated lattice, and  $a,b \in L$. 
Then $\C[a]\cap \C[b]=\C[\abs{a}\jn \abs{b}]$ and $\C[a] \jn \C[b] = \C[\abs{a} \mt \abs{b}]$ $=\C[\abs{a}\abs{b}]$.
\end{lemma}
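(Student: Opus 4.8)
The plan is to use the explicit element-wise description of principal convex subalgebras furnished by Corollary~\ref{CS1}, together with the monotonicity and interaction properties of $\abs{\cdot}$, multiplication, and the residuals. Throughout, recall that for any $a\in L$ the element $\abs a$ is negative, so $\abs a^n\abs b^m$ and similar products all lie in $L^-$ and are $\le$ both $\abs a$ and $\abs b$ when the exponents are positive; also the mirror identities $\ut\rd x = x\ld\ut$ hold by \ut-cyclicity.

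First I would treat the meet $\C[a]\cap\C[b]=\C[\abs a\jn\abs b]$. For the inclusion $\supseteq$, note $\abs a\jn\abs b\le\ut$, and since $\abs a\le\abs a\jn\abs b$ we get $\abs a^n\le(\abs a\jn\abs b)^n$ is false in general, so instead I argue directly: $\abs a\jn\abs b\in\C[a]$ because $\abs a\le\abs a\jn\abs b\le\ut=\abs a^1\ld\ut$ wait -- more carefully, $\abs a\jn\abs b$ lies between $\abs a$ and $\ut$, and $\abs a = \abs a^1$, $\ut\le\abs a\ld\ut$, so by Corollary~\ref{CS1} it belongs to $\C[a]$; symmetrically it belongs to $\C[b]$, hence $\C[\abs a\jn\abs b]\subseteq\C[a]\cap\C[b]$. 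For $\subseteq$, take $x\in\C[a]\cap\C[b]$; by Corollary~\ref{CS1} there are $m,n\in\N$ with $\abs a^m\le\abs x$ and $\abs b^n\le\abs x$, hence $(\abs a\jn\abs b)^k\le\abs x$ fails directly, so I instead use $\abs a^{m}\mt\abs b^{n}\le\abs x$; but $\abs a\jn\abs b\ge\abs a^{m}$? No. The correct move: from $\abs a^m\le\abs x$ and $\abs b^n\le\abs x$ we get, since $\abs a\jn\abs b\ge\abs a$ and $\ge\abs b$ are the wrong direction, use instead that $(\abs a\jn\abs b)^{m+n}\le \abs a^{?}$ -- here one expands the product $(\abs a\jn\abs b)^{m+n}$ using distributivity of $\cdot$ over $\jn$ (Lemma~\ref{residuals}(1)) into a join of terms each a product of $m+n$ factors from $\{\abs a,\abs b\}$; every such term has at least $m$ copies of $\abs a$ or at least $n$ copies of $\abs b$, hence (negativity) is $\le\abs a^m\le\abs x$ or $\le\abs b^n\le\abs x$; therefore the whole join is $\le\abs x$, giving $x\in\C[\abs a\jn\abs b]$ by Corollary~\ref{CS1}. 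This combinatorial expansion step is the main obstacle and the one that genuinely uses \ut-cyclicity indirectly through negativity of absolute values.

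Next I would treat the join $\C[a]\jn\C[b]=\C[\abs a\mt\abs b]=\C[\abs a\abs b]$. Since $\abs a\abs b\le\abs a\mt\abs b\le\abs a,\abs b\le\ut$ (the first inequality by negativity), and $\C[\cdot]$ is monotone, it suffices to show $\C[\abs a\abs b]\supseteq\C[a]\jn\C[b]$ and $\C[\abs a\mt\abs b]\subseteq\C[a]\jn\C[b]$. For the former: $\abs a\abs b\le\abs a\le\ut\le\abs a\abs b\ld\ut$ places $\abs a$ in $\C[\abs a\abs b]$ via Corollary~\ref{CS1} (with $n=1$), since $(\abs a\abs b)^1\le\abs a=\abs{\abs a}$; symmetrically $\abs b\in\C[\abs a\abs b]$, hence both $\C[a]=\C[\abs a]$ and $\C[b]=\C[\abs b]$ are contained in $\C[\abs a\abs b]$, so their join is too. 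For the latter: any convex subalgebra $H$ containing $\C[a]\jn\C[b]$ contains both $\abs a$ and $\abs b$, hence $\abs a\mt\abs b$, hence $\C[\abs a\mt\abs b]\subseteq H$; applying this to $H=\C[a]\jn\C[b]$ gives the inclusion. Finally $\C[\abs a\abs b]\subseteq\C[a]\jn\C[b]$ follows the same way since $\C[a]\jn\C[b]$ contains $\abs a$ and $\abs b$ and is closed under multiplication. Combining the two directions for each of $\abs a\mt\abs b$ and $\abs a\abs b$ yields the chain of equalities, and we are done.

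One remark on bookkeeping: the only nontrivial verification is the expansion of $(\abs a\jn\abs b)^{m+n}$ in the meet case, which I would state as a short sublemma or carry out inline using Lemma~\ref{residuals}(1) and the pigeonhole observation that among $m+n$ factors one cannot have both fewer than $m$ occurrences of $\abs a$ and fewer than $n$ occurrences of $\abs b$. Everything else reduces to the defining description in Corollary~\ref{CS1} plus monotonicity of $\C[\cdot]$ and the fact that convex subalgebras are closed under $\cdot$ and $\mt$.
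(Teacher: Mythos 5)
Your proof is correct and follows essentially the same route as the paper: reduce to absolute values, use the element-wise description of $\C[a]$ from Corollary~\ref{CS1}, and for the meet bound a power of $\abs{a}\jn\abs{b}$ by $\abs{a}^m\jn\abs{b}^n$ via distributivity of $\cdot$ over $\jn$ and a pigeonhole count (the paper uses exponent $mn$ and states the inequality without proof; your $m+n$ with the explicit expansion is the same idea). The join part likewise matches the paper's argument from $\abs{a}\abs{b}\leq\abs{a}\mt\abs{b}\leq\abs{a},\abs{b}\leq\ut$ plus closure under multiplication.
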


\begin{proof} 
In view of Lemma \ref{absvalue} and Corollary \ref{CS1}, we may assume  that $a,b \in L^-$. We first establish the equality 
$\C[a]\cap \C[b]=\C[a\jn b]$. We have $a,b\leq a \jn b \leq \ut$ and so $\C[a\jn b]
\subseteq \C[a]\cap \C[b]$. Now, let $x\in \C[a] \cap \C[b]$. 
Then by Corollary \ref{CS1}, $a ^ m \leq x \leq a ^ m \ld \ut$
and $b ^ n \leq x \leq b ^ n \ld \ut$ for $m,n \in \N$. Note that
$(a \jn b) ^ {mn} \leq a ^ m \jn b ^ n$.
Hence, $(a \jn b) ^ {mn} \leq a ^ m \jn b ^ n \leq x \leq (a ^ m
\ld \ut) \mt (b ^ n \ld \ut)=(a ^ m \jn b ^ n) 
\ld \ut \le (a \jn b) ^ {mn} \ld \ut$. It follows that $x \in C[a \jn b]$, and hence the first equality holds.

We next show that $\C[a] \jn \C[b] = \C[a \mt b] = \C[ab]$. Since $ab \leq a \mt b \leq a,b \leq \ut$, 
we have $\C[a] \jn \C[b] \subseteq \C[a\mt b] \subseteq \C[ab]$. Also $ab$ must belong to $\C[a] \jn \C[b]$. So, $\C[ab] \subseteq \C[a] \jn \C[b]$.
\end{proof}

The following trivial fact will used quite often in calculations:

\begin{lemma}\label{triviallemma}
Let $x_1,\dots,x_n,y\in L^-$ in a residuated lattice $\m L$. Then
\[
(x_1\jn y)\dots (x_n\jn y)\leq x_1\dots x_n \jn y.
\]
\end{lemma}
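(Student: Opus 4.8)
The statement to prove is Lemma~\ref{triviallemma}: for $x_1,\dots,x_n,y\in L^-$ in a residuated lattice $\m L$, we have $(x_1\jn y)\cdots(x_n\jn y)\le x_1\cdots x_n\jn y$.

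\medskip

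The plan is to proceed by induction on $n$. The base case $n=1$ is the trivial identity $x_1\jn y\le x_1\jn y$. For the inductive step, I would first expand the product $(x_1\jn y)\cdots(x_{n+1}\jn y)$ using the fact that multiplication preserves joins in each argument (Lemma~\ref{residuals}(1)). Writing $P=(x_1\jn y)\cdots(x_n\jn y)$ and multiplying on the right by $(x_{n+1}\jn y)$, we get
\[
P\cdot(x_{n+1}\jn y)=P x_{n+1}\jn P y.
\]
By the induction hypothesis $P\le x_1\cdots x_n\jn y$, and since multiplication is order-preserving, $P x_{n+1}\le (x_1\cdots x_n\jn y)x_{n+1}=x_1\cdots x_{n+1}\jn y x_{n+1}$. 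Because all elements lie in $L^-$, we have $y x_{n+1}\le y$ (as $x_{n+1}\le\ut$ and multiplication is order-preserving), so $P x_{n+1}\le x_1\cdots x_{n+1}\jn y$. Similarly $P y\le (x_1\cdots x_n\jn y)y=x_1\cdots x_n y\jn y^2\le y\jn y=y$, using $y\le\ut$ and $x_1\cdots x_n\le\ut$. Taking the join of these two bounds gives $P\cdot(x_{n+1}\jn y)\le x_1\cdots x_{n+1}\jn y$, completing the induction.

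\medskip

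There is no real obstacle here; the only points requiring a modicum of care are the bookkeeping of which factors are being absorbed into $y$ via negativity and the correct invocation of Lemma~\ref{residuals}(1) to distribute the product over the join (noting that only finitely many joins are involved, so the general statement about arbitrary existing joins applies a fortiori). One could alternatively give a direct one-line argument: by distributivity of multiplication over joins, the product $(x_1\jn y)\cdots(x_n\jn y)$ equals the join of all $2^n$ products obtained by selecting, from each factor, either $x_i$ or $y$; every such product except $x_1\cdots x_n$ itself contains at least one factor equal to $y$ and all remaining factors are $\le\ut$, hence is $\le y$; therefore the whole join is $\le x_1\cdots x_n\jn y$. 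I would likely present whichever of these two forms is shortest, probably the direct one, since the lemma is described as trivial.
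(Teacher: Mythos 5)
Your proof is correct and follows essentially the same route as the paper, which expands $(x_1\jn y)(x_2\jn y)=x_1x_2\jn x_1y\jn yx_2\jn y^2\leq x_1x_2\jn y$ for $n=2$ (absorbing every term containing a $y$ factor via negativity) and then declares "the rest is an easy induction" -- your inductive step simply carries out that induction in detail, and your alternative $2^n$-term expansion is the same idea stated all at once.
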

\begin{proof}
Indeed, $(x_1\jn y)(x_2\jn y)=x_1x_2 \jn x_1y \jn yx_2 \jn y^2\leq x_1x_2 \jn y$ for $n=2$, and the rest is an easy induction.
\end{proof}

This leads to the main result of this section:

\begin{theorem}\label{CS8} 
If $\m L$ is an $\ut$-cyclic residuated lattice, then $\cs{L}$ is a distributive algebraic  lattice. The poset $\K(\cs{L})$ of compact elements of $\cs{L}$ -- consisting of the principal convex subalgebras of $\m L$ -- is a sublattice of $\cs{L}$. 
\end{theorem}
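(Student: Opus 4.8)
The plan is to assemble Theorem~\ref{CS8} from the structural facts already established. Recall that $\cs{L}$ was observed to be an algebraic closure system, hence an algebraic lattice; the compact elements of an algebraic lattice are exactly the finitely generated members, so the content left to prove is (i) distributivity and (ii) that $\K(\cs{L})$ is closed under the lattice operations of $\cs{L}$, with its members being precisely the principal convex subalgebras.

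First I would dispose of the description of compact elements. By Lemma~\ref{CS4}, $\C[S] = \{x : h \le \abs{x} \text{ for some } h \in \lan\abs{S}\ran\}$, and every element of $\lan\abs{S}\ran$ is a finite product $s_1\cdots s_n$ with $s_i \in \abs{S}$, hence lies in $\lan\abs{\{s_1,\dots,s_n\}}\ran$; thus $\C[S] = \bigcup\{\C[F] : F \subseteq S \text{ finite}\}$, and since this union is directed (using Lemma~\ref{CS3}, $\C[F_1] \jn \C[F_2] = \C[F_1 \cup F_2]$), any compact convex subalgebra equals $\C[F]$ for a single finite $F$, and then Lemma~\ref{CS3} (iterated) collapses $\C[F]$ to a principal $\C[a]$. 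Conversely each $\C[a]$ is compact, again because $\C[a] = \bigcup\{\C[b] : b \in \C[a]\}$ forces $\C[a] = \C[b]$ for some single $b$ whenever $\C[a]$ sits below a directed join. So $\K(\cs{L})$ is exactly the set of principal convex subalgebras, and Lemma~\ref{CS3} shows directly that $\C[a]\cap\C[b]$ and $\C[a]\jn\C[b]$ are again principal, i.e.\ $\K(\cs{L})$ is a sublattice of $\cs{L}$.

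For distributivity, the cleanest route is to prove the distributive law on the sublattice $\K(\cs{L})$ of compact elements and then lift it: in an algebraic lattice, if the compact elements form a sublattice that is distributive, the whole lattice is distributive (every element is a directed join of compacts, and both $\jn$ and the finite $\cap$ here commute with directed joins — $\cap$ does because $\cs{L}$ is an algebraic \emph{closure system} closed under directed unions). On $\K(\cs{L})$, using the formulas of Lemma~\ref{CS3} and passing to negative cones via Lemma~\ref{absvalue}, the identity $\C[a] \cap (\C[b] \jn \C[c]) = (\C[a]\cap\C[b]) \jn (\C[a]\cap\C[c])$ becomes, for $a,b,c \in L^-$, the claim $\C[a \jn bc] = \C[(a\jn b)(a\jn c)]$. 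One inclusion is trivial since $(a\jn b)(a\jn c) \le a \jn bc$; the reverse inclusion is exactly where Lemma~\ref{triviallemma} is needed: it gives $(a\jn b)^{\,k}(a \jn c)^{\,k} \le \bigl((a\jn b)(a\jn c)\bigr)$-power dominating a power of $a \jn bc$, more precisely $(a \jn bc)^{m} = $ a product of copies of $a$ and $bc$ each of which is $\ge$ a bounded product of $(a\jn b)$'s and $(a \jn c)$'s, so some power of $a\jn bc$ lies below some power of $(a\jn b)(a\jn c)$, and Corollary~\ref{CS1} converts this into $\C[a\jn bc] \subseteq \C[(a\jn b)(a\jn c)]$.

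The main obstacle I anticipate is precisely this last power-comparison estimate: one must check that a suitable power $(a\jn bc)^{N}$ can be rewritten, using only $\ut$-cyclicity and the order, as a product dominated by a power of $(a\jn b)(a\jn c)$ — the subtlety being non-commutativity, so that $bc$ does not simply split and terms must be interleaved carefully, with Lemma~\ref{triviallemma} applied to strings of length depending on $N$. Once that inequality is in hand, Corollary~\ref{CS1} and the observation that all elements involved are negative finish the compact case, and the algebraicity argument of the first paragraph propagates distributivity to all of $\cs{L}$. I would write the proof in the order: (1) compact $=$ principal and $\K(\cs{L})$ a sublattice, citing Lemma~\ref{CS3}; (2) the power estimate and distributivity on $\K(\cs{L})$, citing Lemma~\ref{triviallemma} and Corollary~\ref{CS1}; (3) the lifting of distributivity to $\cs{L}$ via directed joins.
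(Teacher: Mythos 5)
Your argument is sound, but it establishes distributivity by a different route from the paper. The paper does not restrict to compact elements: given arbitrary $X,Y,Z\in\cs{L}$ and $x\in X\cap(Y\jn Z)$, it picks $a_1,\dots,a_n\in Y^-\cup Z^-$ with $a_1\cdots a_n\le\abs{x}$ (Lemma~\ref{CS4}), applies Lemma~\ref{triviallemma} to get $(a_1\jn\abs{x})\cdots(a_n\jn\abs{x})\le\abs{x}$, and observes that each factor lies in $(X\cap Y)^-$ or $(X\cap Z)^-$; this places $x$ in $(X\cap Y)\jn(X\cap Z)$ directly, with no lifting from compacts. Your route --- verify the distributive law on $\K(\cs{L})$ via Lemma~\ref{CS3} and propagate it through directed joins --- is equally valid and arguably more modular, at the cost of the (standard, but not free) lifting step, which you justify correctly. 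One correction, though: you have the two inclusions the wrong way around. The inequality $(a\jn b)(a\jn c)\le a\jn bc$ from Lemma~\ref{triviallemma} yields $\C[a\jn bc]\subseteq\C[(a\jn b)(a\jn c)]$ (for negative elements, a smaller generator gives a larger convex subalgebra), and this is precisely the substantive direction $X\cap(Y\jn Z)\subseteq(X\cap Y)\jn(X\cap Z)$. The ``reverse'' inclusion you flag as the main obstacle is the direction that holds in every lattice, and element-wise it needs no interleaving at all: since $bc\le b$ and $bc\le c$, we have $a\jn bc\le a\jn b$ and $a\jn bc\le a\jn c$, whence $(a\jn bc)^2\le(a\jn b)(a\jn c)$, and Corollary~\ref{CS1} finishes. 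So your anticipated power-comparison difficulty dissolves, and the proof goes through as planned.
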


\begin{proof}
Let $\m L$ be an $\ut$-cyclic residuated lattice. In view of the preceding results, we only need to prove that the lattice $\cs{L}$ is distributive. To this end, let $X,Y,Z\in\cs{L}$, and let $x\in X\cap (Y\jn Z)$. Then $x\in X$ and there exist $a_1,\dots,a_n\in Y^- \cup Z^-$ such that $a_1\dots a_n\leq \abs{x}$. Invoking the preceding lemma we get 
\[
(a_1\jn \abs{x})\dots (a_n\jn \abs{x})\leq a_1\dots a_n \jn\abs{x}=\abs{x}.
\]
Since $\abs{x}\in X^-$, the element $a_i \jn \abs{x}$ belongs to $(X\cap Y)^-$ when $a_i\in Y^-$, or to $(X\cap Z)^-$ when $a_i\in Z^-$. Therefore, $\abs{x}$, and hence, by Lemma \ref{absvalue}, $x$ is an element of $\C[(X\cap Y)\cup (X\cap Z)]=(X\cap Y)\jn (X\cap Z)$.
This proves $X\cap (Y\jn Z) \subseteq (X\cap Y)\jn (X\cap Z)$, and the distributivity of $\cs{L}$.
\end{proof}

Since the variety of lattices is congruence-distributive, the distributivity of the lattice $\cs{L}$ also follows from the following:

\begin{theorem}
Let $\m L$ be an $\ut$-cyclic residuated lattice. Then the lattice $\cs{L}$ can be embedded (as a complete sublattice) into the congruence lattice of the lattice reduct of $\m L$.
\end{theorem}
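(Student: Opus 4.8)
The plan is to exhibit an explicit map sending each convex subalgebra $H$ of $\m L$ to a congruence $\theta_H$ of the lattice reduct $(L,\jn,\mt)$, and to show this map is an order-embedding that preserves arbitrary joins and meets. The natural candidate, echoing the $\ell$-group situation and the treatment of normal convex subalgebras in \cite{BT03}, is to set $x\mathrel{\theta_H}y$ iff $\abs{x\ld y}\mt\abs{y\ld x}\in H$, or equivalently (and more symmetrically for the lattice reduct) iff there exist $h,k\in H^-$ with $hx\le y$ and $ky\le x$ in the sense captured by the residuals; concretely I expect $x\mathrel{\theta_H}y$ to be equivalent to "$x$ and $y$ generate the same convex subalgebra modulo $H$", i.e. $\C[x]\jn H=\C[y]\jn H$ fails to be fine enough, so instead one wants: $x\mathrel{\theta_H}y$ iff $\abs{(x\jn y)\ld(x\mt y)}\in H$. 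I would first check this is an equivalence relation on $L$ (reflexivity and symmetry are immediate; transitivity uses Lemma~\ref{residuals}(3)(c) together with Lemma~\ref{absvalue} and the fact that $H$ is closed under multiplication and convex).

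Next I would verify that $\theta_H$ is a lattice congruence: given $x\mathrel{\theta_H}y$, one must show $x\jn z\mathrel{\theta_H}y\jn z$ and $x\mt z\mathrel{\theta_H}y\mt z$ for all $z$. This is the standard computation where one bounds $\abs{(x\jn z)\ld(y\jn z)}$ (and the symmetric term) below by something of the form $h\mt\ut$ with $h\in\lan\abs{H}\ran$, using the residuation inequalities in Lemma~\ref{residuals}(3) and the distributivity-like estimate of Lemma~\ref{triviallemma}; membership in $H$ then follows from Lemma~\ref{absvalue}(4)--(5) and convexity. The key point making the construction honest is that the congruence class of $\ut$ under $\theta_H$ is exactly $H$: if $x\mathrel{\theta_H}\ut$ then $\abs{x}=\abs{x\mt\ut}$ relates to $\ut$ so $\abs{x}\in H$, hence $x\in H$ by Lemma~\ref{absvalue}(5); conversely if $x\in H$ then $\abs{x}\in H^-$ and the relevant absolute value lies in $H$. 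This recovers $H$ from $\theta_H$, giving injectivity, and shows $H\subseteq K$ iff $\theta_H\subseteq\theta_K$ (one direction is the class-of-$\ut$ argument, the other is direct from the defining formula), so the map is an order-embedding.

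To get a \emph{complete} sublattice embedding I would argue that $\theta_{\bigcap_i H_i}=\bigcap_i\theta_{H_i}$ and $\theta_{\bigvee_i H_i}=\bigvee_i\theta_{H_i}$. The meet equality is essentially immediate from the defining formula, since $\abs{(x\jn y)\ld(x\mt y)}$ lies in $\bigcap H_i$ iff it lies in each $H_i$. For joins: the inclusion $\bigvee_i\theta_{H_i}\subseteq\theta_{\bigvee_i H_i}$ is monotonicity; for the reverse, if $x\mathrel{\theta_{\bigvee_i H_i}}y$ then by Lemma~\ref{CS4} the witnessing absolute value is $\ge$ a product $h_1\cdots h_n$ with each $h_j\in\abs{H_{i_j}}$, and one threads a finite $\theta_{H_{i_1}}\vee\cdots\vee\theta_{H_{i_n}}$-chain from $x$ to $y$ by inserting the intermediate elements $x\mt(h_1\cdots h_j)$ (or $\abs{x}\jn$ suitable truncations), each consecutive pair related by a single $\theta_{H_{i_j}}$; since a join of congruences contains all such finite zig-zags, we are done.

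The main obstacle I anticipate is pinning down the \emph{correct} defining formula for $\theta_H$ on the lattice reduct and then verifying the compatibility with $\jn$ and $\mt$ cleanly in the non-integral, non-commutative setting: the absolute value $\abs{x}=x\mt(\ut\rd x)\mt\ut$ is a two-sided object, and showing $x\mt z\mathrel{\theta_H}y\mt z$ requires controlling residuals of meets, which is where Lemma~\ref{residuals}(2) and (3) and Lemma~\ref{triviallemma} must be combined carefully; getting the one-sided estimates to close up symmetrically (so that both $\abs{u\ld v}$ and $\abs{v\ld u}$ land in $H$) is the delicate part. Once the formula is right, everything else is routine bookkeeping with Lemma~\ref{absvalue} and Lemma~\ref{CS4}.
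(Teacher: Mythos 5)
Your overall architecture coincides with the paper's: both define a lattice congruence $\Theta_H$ elementwise from $H$ via residuals, recover $H$ as the class $[\ut]_{\Theta_H}$ to get an order-embedding, observe that meets are preserved automatically, and prove join-preservation by writing a witness for $\lan a,b\ran\in\Theta_{\bigvee_i H_i}$ as a finite product $x_1\cdots x_m$ of generators and threading a finite chain from $a$ to $b$ whose consecutive links each lie in a single $\Theta_{H_{i_j}}$. (The paper sidesteps your anticipated ``main obstacle'' by citing \cite{BT03} and \cite{JT02} for compatibility of $\Theta_H$ with $\jn$ and $\mt$, noting that normality of $H$ is not used in that part of the argument.) However, two of your concrete choices do not work as written. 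First, the defining formula: both of your candidates, $\abs{x\ld y}\mt\abs{y\ld x}\in H$ and $\abs{(x\jn y)\ld(x\mt y)}\in H$, fail reflexivity. Taking $x=y$ they require $\abs{x\ld x}\in H$; but $x\ld x$ may strictly exceed $\ut$, and then $\abs{x\ld x}=(x\ld x)\mt\big(\ut\rd(x\ld x)\big)\mt\ut$ can drop far below anything in $H$. Concretely, on the three-element chain $0<\ut<\top$ with $\top\cdot\top=\top$ and $0$ absorbing, $\{\ut\}$ is a convex subalgebra, $\top\ld\top=\top$, and $\abs{\top\ld\top}=\abs{\top}=0\notin\{\ut\}$, so $\lan\top,\top\ran$ would not be related. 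The correct formula---the one the paper uses---truncates at $\ut$ instead of taking the absolute value: $\lan a,b\ran\in\Theta_H$ iff $(a\ld b)\mt(b\ld a)\mt\ut\in H$; since $x\ld x\geq\ut$ always, reflexivity is then immediate, and the rest of your bookkeeping goes through.

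Second, the interpolating chain in the join argument. Your proposed intermediate elements $x\mt(h_1\cdots h_j)$ all sit below $x$, so they do not interpolate between $x$ and $y$ at all, and the hedge ``$\abs{x}\jn$ suitable truncations'' leaves the actual construction unspecified. Assuming (as one may, since $\Theta_K$ is a lattice congruence) that $a\leq b$, and choosing $x_1,\dots,x_m\in\bigcup_i H_i^-$ with $bx_1\cdots x_m\leq a$, the elements that work are $z_j=a\jn bx_1\cdots x_{m-j}$, which climb from $z_0=a$ to $z_m=b$; the fact that each step $\lan z_j,z_{j+1}\ran$ lands in a single $\Theta_{H_i}$ rests on the inequality $(x\ld y)\mt\ut\leq\big((z\jn x)\ld(z\jn y)\big)\mt\ut$, valid in every residuated lattice. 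With these two repairs your plan becomes essentially the paper's proof.
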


\begin{proof}
For every $H\in\cs{L}$, 
$\Theta_H=\{\lan a,b\ran \in L^2\colon (a\ld b) \mt (b\ld a) \mt\ut \in H\}$
is a congruence of the lattice reduct $(L,\jn,\mt)$ of $\m L$ such that $[\ut]_{\Theta_H} = H$.
Indeed, it is proved in \cite{BT03} and \cite{JT02} that $\Theta_H$ is a congruence of $\m L$ provided that $H$ is a normal\footnote{Briefly, $H\in\cs{L}$ is \emph{normal} iff for all $a,b\in L$, $(a\ld b)\mt\ut \in H$ iff $(b\rd a)\mt\ut\in H$. Normal convex subalgebras are discussed in Section \ref{section<<semilinearity}.} convex subalgebra, but normality of $H$ is not used in proving that $\Theta_H$ is an equivalence relation which is compatible with the lattice operations.

It is easily seen that $H\subseteq K$ iff $\Theta_H \subseteq \Theta_K$, for all $H,K\in\cs{L}$.
Thus the map $H\mapsto\Theta_H$ is an order-embedding of $\cs{L}$ into the congruence lattice of $(L,\jn,\mt)$, the lattice reduct of $\m L$.

Let $\{H_i\colon i\in I\}$ be a collection of convex subalgebras of $\m L$, and let $H=\bigcap_{i\in I} H_i$ and $K=\bigvee_{i\in I} H_i$ in $\cs{L}$.
Writing $\Theta_i$ for $\Theta_{H_i}$, we obviously have $\bigcap_{i\in I} \Theta_i = \Theta_{H}$ and $\bigvee_{i\in I} \Theta_i \subseteq \Theta_K$.

Let $\lan a,b\ran \in\Theta_K$. Since $\Theta_K$ is a lattice congruence, we may safely assume that $a\leq b$.
Then $(b\ld a) \mt\ut \in K$, so there exist $x_1,\dots,x_m\in \bigcup_{i\in I} H_i^-$ such that $x_1\dots x_m\leq (b\ld a) \mt\ut$, whence $bx_1\dots x_m\leq a$.
Then
\[
x_m\leq (bx_1\dots x_{m-1}\ld a) \mt\ut \leq \big((a\jn bx_1\dots x_{m-1})\ld a\big) \mt\ut,
\]
where the latter inequality follows from the fact that all residuated lattices satisfy the inequality $(x\ld y) \mt \ut \leq ((z\jn x)\ld (z\jn y)) \mt\ut$.
Hence, letting $z_1=a\jn bx_1\dots x_{m-1}$, we have $a\leq z_1$ and $(z_1\ld a) \mt\ut \in \bigcup_{i\in I} H_i$, so $\lan a,z_1\ran \in \bigcup_{i\in I}\Theta_i$.
Analogously,
\begin{align*}
x_{m-1} & \leq (bx_1\dots x_{m-2} \ld bx_1\dots x_{m-2}x_{m-1})\mt\ut \\
& \leq \big((a\jn bx_1\dots x_{m-2}) \ld (a\jn bx_1\dots x_{m-2}x_{m-1})\big)\mt\ut,
\end{align*}
and letting $z_2=a\jn bx_1\dots x_{m-2}$, we get $z_1\leq z_2$ and $(z_2\ld z_1) \mt\ut \in \bigcup_{i\in I} H_i$.
Thus $\lan z_1,z_2\ran \in \bigcup_{i\in I} \Theta_i$.
Repeating this procedure we get
\[
x_1\leq (b\ld bx_1)\mt\ut \leq (b\ld (a\jn bx_1))\mt\ut,
\]
so with $z_{m-1}=a\jn bx_1$ we have $z_{m-1}\leq b$ and $(b\ld z_{m-1})\mt\ut \in \bigcup_{i\in I} H_i$. Thus $\lan z_{m-1},b\ran \in\bigcup_{i\in I}\Theta_i$.

We have found $z_0,\dots,z_{m}\in L$ such that $a=z_0\leq z_1\leq\dots\leq z_{m-1}\leq z_m=b$ and $\lan z_j,z_{j+1}\ran \in\bigcup_{i\in I}\Theta_i$ for $j=0,\dots,m-1$, which means that $\lan a,b\ran \in\bigvee_{i\in I}\Theta_i$.
Hence $\Theta_K\subseteq\bigvee_{i\in I}\Theta_i$ and the proof is complete.
\end{proof}


\section{Polars and prime convex subalgebras}\label{polarsandprimes}

We have seen in the previous section that the lattice $\cs{L}$ of convex subalgebras of an $\ut$-cyclic residuated lattice $\m L$ is an algebraic distributive  lattice. As such, it satisfies the join-infinite distributive law
\[
X \,\cap \bigvee_{i\in I}Y_i=\bigvee_{i\in I}(X\cap\, Y_i),
\]
and hence it is relatively pseudocomplemented.  
That is, for all $X,Y\in\cs{L}$, the relative pseudocomplement $X\ra Y$ of $X$ relative to $Y$ exists:
\[
X\ra Y =\max\{Z\in \cs{L}\colon X\cap Z\subseteq Y\}.
\]
The next lemma provides an element-wise description of $X\ra Y$ in terms of the absolute value, and in particular one for the pseudocomplement 
$X^\perp =X\ra \{\ut\}$  of $X$.

\begin{lemma}\label{polars}
If $\m L$ is an $\ut$-cyclic residuated lattice, then $\cs{L}$ is a relatively pseudocomplemented lattice. Specifically, given $X,Y\in\cs{L}$,
\[
X\ra Y=\{a\in L\colon \abs{a} \jn \abs{x} \in Y, \text{ for all } x\in X\}\text{,}
\]
and in particular, 
\[
X^\perp=X\ra\{\ut\}=\{a\in L\colon \abs{a}\jn \abs{x}=\ut, \text{ for all } x\in X\}.
\] 
\end{lemma}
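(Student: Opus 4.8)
The plan is to verify directly that the set on the right-hand side, call it $W=\{a\in L\colon \abs{a}\jn\abs{x}\in Y \text{ for all } x\in X\}$, equals $X\ra Y$, exploiting the element-wise description of the join in $\cs{L}$ furnished by Lemma~\ref{CS4} together with the characterization of membership via absolute values (Lemma~\ref{absvalue}, especially conditions (4) and (5)) and the elementary inequality of Lemma~\ref{triviallemma}. Since $\cs{L}$ is an algebraic distributive lattice (Theorem~\ref{CS8}) satisfying the join-infinite distributive law, it is automatically relatively pseudocomplemented, so the only content is the formula for $X\ra Y$. First I would reduce everything to negative elements: because $\abs{a}\le\ut$ always and, by Lemma~\ref{absvalue}(5), a convex subalgebra contains $a$ iff it contains $\abs{a}$, it suffices to work inside $L^-$ throughout.

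Next I would show $W\in\cs{L}$, i.e.\ that $W$ is a convex subalgebra containing $Y$; equivalently, that $W^-$ is a prefilter of $\m L^-$ (in the sense of the Proposition preceding Lemma~\ref{CS3}). Closure under the lattice operations and order-convexity are routine monotonicity arguments using $\abs{\cdot}$. The key point is closure under multiplication: if $a,b\in W^-$, then for each $x\in X^-$ one has, by Lemma~\ref{triviallemma}, $(a\jn x)(b\jn x)\le ab\jn x$, while $ab\jn x\le (a\jn x)\wedge(b\jn x)$; since $a\jn x,b\jn x\in Y$ and $Y$ is a convex subalgebra (hence closed under multiplication and order-convex), it follows that $ab\jn x\in Y$, so $ab\in W$. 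That $Y\subseteq W$ is immediate since $\abs{a}\jn\abs{x}\ge\abs{a}$ and $Y$ is order-convex with top $\ut$.

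Then I would check the two defining inequalities of the relative pseudocomplement. For $X\cap W\subseteq Y$: if $c\in X\cap W$ then $\abs{c}\in X$, so taking $x=\abs{c}$ in the definition of $W$ gives $\abs{c}=\abs{c}\jn\abs{c}\in Y$, hence $c\in Y$ by Lemma~\ref{absvalue}(5). For maximality, suppose $Z\in\cs{L}$ with $X\cap Z\subseteq Y$, and let $a\in Z$; for any $x\in X$ the element $\abs{a}\jn\abs{x}$ lies in $X$ (since $\abs{x}\in X$ and convex subalgebras are closed under $\jn$ and contain everything between $\abs{x}$ and $\ut$... more precisely $\abs{a}\jn\abs{x}\ge\abs{x}$ and $\le\ut$, so it is in $X$ by convexity) and also in $Z$ (same reasoning with $\abs{a}\in Z$), hence in $X\cap Z\subseteq Y$; thus $a\in W$ and $Z\subseteq W$. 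This establishes $W=\max\{Z\in\cs{L}\colon X\cap Z\subseteq Y\}=X\ra Y$. The special case $X^\perp=X\ra\{\ut\}$ then drops out by noting that $\abs{a}\jn\abs{x}\in\{\ut\}$ means exactly $\abs{a}\jn\abs{x}=\ut$.

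The main obstacle is the verification that $W$ is closed under multiplication; everything else is a short monotonicity check. The clean way through it is precisely Lemma~\ref{triviallemma} sandwiched between $ab\jn x$ below and the two factors above, which pins $ab\jn x$ into the order-convex set $Y$. One should also be slightly careful that the reduction to $L^-$ is legitimate at each step, but Lemma~\ref{absvalue} and Corollary~\ref{CS1} make this bookkeeping harmless.
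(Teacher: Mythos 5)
Your proof is correct, but it follows a genuinely different route from the paper's. The paper disposes of the lemma in a few lines: since $\cs{L}$ is an algebraic distributive lattice it satisfies the join-infinite distributive law, so $a\in X\ra Y$ iff $\C[a]\cap X\subseteq Y$ iff $\C[a]\cap\C[x]\subseteq Y$ for all $x\in X$, and Lemma~\ref{CS3} converts the last condition into $\C[\abs{a}\jn\abs{x}]\subseteq Y$, i.e.\ $\abs{a}\jn\abs{x}\in Y$. Most of the work you do by hand --- in particular the multiplicative closure of $W$ via the sandwich $(a\jn x)(b\jn x)\le ab\jn x\le (a\jn x)\mt(b\jn x)$ from Lemma~\ref{triviallemma} --- is in effect a re-derivation, carried out inside $Y$, of the identity $\C[a]\cap\C[b]=\C[\abs{a}\jn\abs{b}]$ that Lemma~\ref{CS3} already packages. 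What your version buys is that it exhibits the maximum of $\{Z\in\cs{L}\colon X\cap Z\subseteq Y\}$ explicitly, so the existence of the relative pseudocomplement falls out of the construction instead of being quoted from the theory of algebraic distributive lattices; the price is length and some bookkeeping. Two spots deserve a sentence more than you give them: closure of $W$ under multiplication is needed for arbitrary $a,b\in W$, and your reduction to $W^-$ is legitimate only because $W=\{a\colon\abs{a}\in W^-\}$ coincides with $\C[W^-]$ by Lemma~\ref{CS4} once $W^-$ is known to be a filter of $\m L^-$; and closure of $W^-$ under $\mt$ is not a pure monotonicity check, since $(a\mt b)\jn x$ needs the lower bound $ab\jn x\in Y$ (or $(a\jn x)(b\jn x)$) before convexity of $Y$ applies. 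Neither is a gap, just a place to be explicit.
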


\begin{proof}
As noted above, $\cs{L}$ is relatively pseudocomplemented. Thus the proof is a direct consequence of Lemmas \ref{absvalue} (5) and  \ref{CS3}. Indeed, let $X,Y\in\cs{L}$ and $a\in L$. Then $a\in X\ra Y$ iff $\C[a]\subseteq X\ra Y$ iff $\C[a]\cap X\subseteq Y$ iff $\C[a]\cap \C[x]\subseteq Y, \text{ for all } x\in X$ iff $\C[\abs{a}\jn\abs{x}]\subseteq Y, \text{ for all } x\in X$ iff $\abs{a}\jn\abs{x}\in Y, \text{ for all } x\in X$. The description of $X^\perp$ is clear.
\end{proof}

We can define $X^\perp$ for any non-empty subset $X\subseteq L$. It is not hard to show that $X^\perp=\C[X]^\perp$, so $X^\perp$ is always a convex subalgebra.
We refer to $X^\perp$ as the \emph{polar} of $X$; in case $X=\{x\}$, we write $x^\perp$ instead of $\{x\}^\perp$ (or $\C[x]^\perp$) and refer to it as the \emph{principal polar} of $x$.

The map $^\perp\colon \cs{L}\ra \cs{L}$ is a self-adjoint inclusion-reversing map, while the map sending $H\in \cs{L}$ to its double polar $H^{\perp\perp}$ is an intersection-preserving closure operator on $\cs{L}$. By a classic
result due to Glivenko, the image of this closure operator is a (complete) Boolean
algebra $\m{B}_{\cs{L}}$ with least element $\{\ut\}$ and largest element
$L$. The complement of $H$ in $\m{B}_{\cs{L}}$ is precisely $H^\perp$, whereas, for any pair of convex subalgebras $H, K\in \m{B}_{\cs{L}}$, 
\[
H\jn^{\m{B}_{\cs{L}}}K=(H^\perp\cap K^\perp)^\perp \, =(H\cup K)^{\perp\perp}.
\]
On the other hand, meets in $\m{B}_{\cs{L}}$ are just intersections.

Two identities of particular interest to us are the \emph{left prelinearity law} \prp{LP} and the \emph{right prelinearity law} \prp{RP}:
\begin{gather*}
((x\ld y) \mt\ut) \jn ((y\ld x)\mt\ut) \eq \ut,\\
((y\rd x) \mt\ut) \jn ((x\rd y)\mt\ut) \eq \ut.
\end{gather*}

Recall that $H\in \cs{L}$ is said to be \emph{prime} if it is meet-irreducible in $\cs{L}$. That is, whenever $X, Y\in \cs{L}, \text{ and } X\cap Y\subseteq H, \text{ then }  X\subseteq H$ or $Y\subseteq H$. Either prelinearity law implies that the poset of prime convex subalgebras of $\m L$ is a root system (dual tree), as the next result shows. 

The next lemma gives a characterization of prime convex subalgebras.

\begin{lemma}\label{lemma<<primes}
Let $\m L$ be an $\ut$-cyclic residuated lattice that satisfies \prp{LP}. 
Then for every $H\in\cs{L}$, the following are equivalent:
\begin{enumerate}
\item[\rm{(1)}] $H$ is a prime convex subalgebra of $\m L$.
\item[\rm{(2)}] For all $a,b\in L$, if $\abs{a}\jn \abs{b}\in H$, then $a\in H$ or $b\in H$.
\item[\rm{(3)}] For all $a,b\in L$, if $\abs{a}\jn \abs{b}=\ut$, then $a\in H$ or $b\in H$.
\item[\rm{(4)}] For all $a,b\in L$, $(a\ld b) \mt\ut \in H$ or $(b\ld a) \mt\ut \in H$.
\item[\rm{(5)}] The set of all convex subalgebras exceeding $H$ is a chain under set-inclusion.
\end{enumerate}
\end{lemma}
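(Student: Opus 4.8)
The plan is to prove the cycle of implications $(1)\Rightarrow(5)\Rightarrow(4)\Rightarrow(3)\Rightarrow(2)\Rightarrow(1)$, using \prp{LP} only where it is genuinely needed. The implications $(2)\Rightarrow(3)$ and $(4)\Rightarrow(3)$ are trivial (the latter because, by Lemma~\ref{polars} and Lemma~\ref{CS1}, $\abs{a}\jn\abs{b}=\ut$ together with $(a\ld b)\mt\ut\in H$ forces $a\in H$: indeed from $\abs{a}\jn\abs{b}=\ut$ one gets $\abs{b}\leq\ut\rd\abs{a}$, hence $\abs{b}\cdot(a\ld b)\mt\ut\cdots$ — in any case this is a short computation with absolute values). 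For $(3)\Rightarrow(2)$: given $\abs{a}\jn\abs{b}\in H$, I would use \prp{LP} applied to $\abs{a}$ and $\abs{b}$, or rather apply the prelinearity law to the residuals, to split off a factor; the point is that modulo $H$ the elements $\abs a$ and $\abs b$ become comparable, so one of them lies in $H$. The cleanest route is: since $\cs{L}$ is distributive and $\C[\abs a\jn\abs b]=\C[a]\cap\C[b]$ by Lemma~\ref{CS3}, primeness in the lattice-theoretic sense (which is what $(1)$ says) immediately gives $(2)$, so really the substantive content is $(1)\Leftrightarrow(5)$ and that $(4)$ and $(3)$ are equivalent reformulations, with \prp{LP} guaranteeing that $\C[a]\cap\C[b]$-membership is detected by the join of absolute values.

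For $(1)\Rightarrow(5)$: let $H$ be prime and suppose $K,M\in\cs{L}$ both properly contain $H$ but are incomparable. Then I would like to produce $X,Y\in\cs{L}$ with $X\cap Y\subseteq H$ while $X\not\subseteq H$ and $Y\not\subseteq H$, contradicting primeness. The natural candidates: pick $a\in K\setminus M$ and $b\in M\setminus K$ (or rather $a\in K\setminus H$, $b\in M\setminus H$ chosen using incomparability), and consider $\C[a]$ and $\C[b]$; one shows $\C[a]\cap\C[b]=\C[\abs a\jn\abs b]\subseteq H$ using \prp{LP} — this is where prelinearity is essential, since \prp{LP} says $(\abs a\ld\abs b)\mt\ut$ and $(\abs b\ld\abs a)\mt\ut$ join to $\ut$, so the join $\abs a\jn\abs b$ is "large" (close to $\ut$), and membership of $\abs a\jn\abs b$ in $H$ can be arranged. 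Actually the sharper statement: without \prp{LP}, $(5)\Rightarrow(1)$ can fail, so one must be careful to use it symmetrically.

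For $(5)\Rightarrow(4)$: assume the convex subalgebras above $H$ form a chain. Given $a,b\in L$, consider $\C[(a\ld b)\mt\ut]\jn H$ and $\C[(b\ld a)\mt\ut]\jn H$; both contain $H$, so by $(5)$ one is contained in the other, say the first in the second. Then I would argue that $(a\ld b)\mt\ut\in\C[(b\ld a)\mt\ut]\jn H$, and combine this with \prp{LP} — which says the product/join of these two "relative distances" reaches $\ut$ — to conclude $(b\ld a)\mt\ut\in H$ (roughly: if $c\jn d=\ut$-ish and $\C[c]\subseteq\C[d]\jn H$, then $d$ together with an element of $H$ dominates enough of $c$ that one forces $d\in H$; this uses Lemma~\ref{CS4}'s description of generated convex subalgebras via submonoids of absolute values). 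The hard part will be this step $(5)\Rightarrow(4)$, specifically bookkeeping the absolute-value inequalities to extract membership in $H$ from a chain condition — one has to use \prp{LP} to know that $(a\ld b)\mt\ut$ and $(b\ld a)\mt\ut$ are "complementary modulo being negative," so that once their principal convex subalgebras are comparable, the smaller one's generator is squeezed into $H$. Throughout I would lean on Lemma~\ref{absvalue}, Corollary~\ref{CS1}, Lemma~\ref{CS3}, and Lemma~\ref{triviallemma} to convert every lattice-of-convex-subalgebras statement into an explicit inequality between products of negative elements, which is the computational engine of the whole argument.
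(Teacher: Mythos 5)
Your reversed cycle front-loads all the substance into $(1)\Rightarrow(5)$ and $(5)\Rightarrow(4)$, and neither step is correctly executed. In $(1)\Rightarrow(5)$ the pivotal claim is that for $a\in K\setminus M$ and $b\in M\setminus K$ one has $\C[a]\cap\C[b]=\C[\abs{a}\jn\abs{b}]\subseteq H$; but $\abs{a}\jn\abs{b}$ lies in $K\cap M$ (by convexity, since it sits between $\abs{a}$ and $\ut$), and $K\cap M$ merely \emph{contains} $H$ --- nothing forces it into $H$, and ``\prp{LP} makes the join close to $\ut$, so membership can be arranged'' is not an argument. The pair to which primeness should be applied is not $a,b$ but $u=(a\ld b)\mt\ut$ and $v=(b\ld a)\mt\ut$: by \prp{LP} these satisfy $u\jn v=\ut$, hence $\C[u]\cap\C[v]=\{\ut\}\subseteq H$, primeness gives $u\in H$ or $v\in H$, and then $au\leq b\leq\ut$ (resp.\ $bv\leq a\leq\ut$) transfers membership into $K$ or $M$ and yields the contradiction. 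That, however, is exactly the paper's chain $(1)\Rightarrow(2)\Rightarrow(3)\Rightarrow(4)\Rightarrow(5)$, in which each link is one or two lines and \prp{LP} enters only at $(3)\Rightarrow(4)$.

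In $(5)\Rightarrow(4)$ you draw the wrong conclusion: from $\C[u]\jn H\subseteq\C[v]\jn H$ with $u\jn v=\ut$ one gets $u\in H$, not $v\in H$ (by distributivity of $\cs{L}$, $\C[u]=\C[u]\cap(\C[v]\jn H)=(\C[u]\cap\C[v])\jn(\C[u]\cap H)=\C[u]\cap H$). Your version is already false in $\m L=\mathbb{Z}$ with $H=\{0\}$, $u=0$, $v=-1$. Since $(4)$ is a disjunction the step is repairable, but as stated the mechanism asserts a false implication. Two further loose ends: your cycle needs $(2)\Rightarrow(1)$ and $(3)\Rightarrow(2)$, yet you only sketch $(1)\Rightarrow(2)$ (an implication your cycle does not require) and never supply the computation behind ``modulo $H$ they become comparable''; and the aside that $(5)\Rightarrow(1)$ can fail without \prp{LP} is incorrect --- that implication holds in every $\ut$-cyclic residuated lattice by distributivity of $\cs{L}$ (the paper's cautionary remark concerns $(1)\Leftrightarrow(4)$, not $(5)\Rightarrow(1)$).
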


\begin{proof} 
(1) implies (2): If $\abs{a} \jn \abs{b} \in H$, then $\C[a] \cap \C[b] = \C[\abs{a} \jn \abs{b}] \subseteq H$, whence $\C[a]\subseteq H$ or $\C[b]\subseteq H$, so $a\in H$ or $b\in H$, by Lemma \ref{CS3}.

(2) implies (3): It follows by specialization.

(3) implies (4): Since $((a\ld b)\mt\ut) \jn ((b\ld a)\mt\ut) = \ut$ by \prp{LP}, it follows that $(a\ld b) \mt\ut \in H$ or $(b\ld a) \mt\ut \in H$.

(4) implies (5): Suppose that $X$ and $Y$ are two incomparable convex subalgebras that exceed $H$. Then there exist $a\in X\setminus Y$ and $b\in Y\setminus X$ and we may assume that $a,b\in L^-$. If $(a\ld b)\mt\ut\in H\subseteq X$, then $a((a\ld b)\mt\ut)\in X$ which implies $b\in X$ because $a((a\ld b)\mt\ut)\leq b\leq\ut$. Analogously, if $(b\ld a)\mt\ut\in H\subseteq Y$, then $b((b\ld a)\mt\ut)\in Y$ whence $a\in Y$. In either case we reach a contradiction.

(5) implies (1): Obvious.
\end{proof}

It is worth noting that Conditions \rm{(1)} and \rm{(4)} are equivalent if and only if $\m L$ satisfies \prp{LP}.
Indeed, suppose that $\m L$ does not satisfy \prp{LP}, 
i.e., there exist $a,b\in L$ such that $s=((a\ld b)\mt\ut)\jn ((b\ld a)\mt\ut)<\ut$. Now, if $V$ is a value\footnote{A value of an element $a\in L$  is a (necessarily completely meet-irreducible) convex subalgebra that is maximal with respect to not containing $a$.} 
of $s$, then clearly neither $(a\ld b)\mt\ut$ nor $(b\ld a)\mt\ut$ belongs to $V$ (for otherwise $s\in V$).
Thus $V$ is a prime convex subalgebra which does not fulfill Condition  \rm{(4)}.

We also note that the right prelinearity law \prp{RP} can be used to obtain the dual of Lemma \ref{lemma<<primes}. One simply replaces $\ld$ by $\rd$ in Condition \rm{(4)}.

Lemma \ref{lemma<<primes} has the following important consequence:

\begin{corollary}\label{linearity}
Let $\m L$ be an $\ut$-cyclic residuated lattice that satisfies either prelinerity law. If $\cs{L}$ -- equivalently, $\K(\cs{L})$ -- is totally ordered, then so is $\m L$.
\end{corollary}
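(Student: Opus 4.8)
The goal is Corollary~\ref{linearity}: if $\m L$ is an $\ut$-cyclic residuated lattice satisfying \prp{LP} (or \prp{RP}) and $\cs{L}$ is totally ordered, then $\m L$ is totally ordered.

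\medskip

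\textbf{Plan.} The plan is to show that any two elements of $\m L$ are comparable by reducing to the negative cone and invoking Lemma~\ref{lemma<<primes}. First I would recall that, by Lemma~\ref{absvalue}, comparing arbitrary $a,b\in L$ can be replaced by comparing their absolute values: it suffices to prove that $\abs{a}$ and $\abs{b}$ are comparable for all $a,b$, since if, say, $\abs{a}\leq\abs{b}$ then $b\leq\abs{b}\ld\ut\leq\abs{a}\ld\ut$ and... actually the cleanest route is different. Instead, observe that $\cs{L}$ being a chain means in particular that $\{\ut\}$ is a prime convex subalgebra (it is meet-irreducible, being the bottom of a chain), so by the equivalence of (1) and (4) in Lemma~\ref{lemma<<primes}, for all $a,b\in L$ we have $(a\ld b)\mt\ut\in\{\ut\}$ or $(b\ld a)\mt\ut\in\{\ut\}$; that is, $(a\ld b)\mt\ut=\ut$ or $(b\ld a)\mt\ut=\ut$, i.e.\ $\ut\leq a\ld b$ or $\ut\leq b\ld a$.

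\medskip

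\textbf{Key steps.} The second step is to translate $\ut\leq a\ld b$ back into an order relation. By residuation, $\ut\leq a\ld b$ is equivalent to $a\ut\leq b$, i.e.\ $a\leq b$; similarly $\ut\leq b\ld a$ gives $b\leq a$. Hence $a\leq b$ or $b\leq a$, so $\m L$ is totally ordered. If instead $\m L$ satisfies \prp{RP}, one uses the dual of Lemma~\ref{lemma<<primes} (noted in the text: replace $\ld$ by $\rd$ in Condition (4)), obtaining $(a\rd b)\mt\ut=\ut$ or $(b\rd a)\mt\ut=\ut$, i.e.\ $b\leq a$ or $a\leq b$ by the residuation law $\ut\leq a\rd b \iff \ut b\leq a$. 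The equivalence ``$\cs{L}$ totally ordered iff $\K(\cs{L})$ totally ordered'' is immediate since $\K(\cs{L})$ is join-dense in the algebraic lattice $\cs{L}$ (Theorem~\ref{CS8}): a chain of compact elements forces the whole lattice to be a chain, as every element is a directed join of compacts.

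\medskip

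\textbf{Main obstacle.} There is essentially no hard step here; the only thing to be careful about is the degenerate case where $L=\{\ut\}$, in which $\cs{L}$ has a single element and is trivially a chain, and $\m L$ is trivially totally ordered, so the statement holds vacuously. One should also make sure the hypothesis ``$\cs{L}$ totally ordered'' genuinely yields that $\{\ut\}$ is meet-irreducible: this needs $\cs{L}$ to be nontrivial, but if $\cs{L}$ is the one-element lattice then $L=\{\ut\}$ and we are in the degenerate case just handled. Thus the argument is a direct application of the (1)$\Leftrightarrow$(4) equivalence in Lemma~\ref{lemma<<primes} to the prime convex subalgebra $\{\ut\}$, followed by a one-line residuation computation.
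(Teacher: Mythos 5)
Your proof is correct and matches the paper's intended argument: the corollary is stated there as an immediate consequence of Lemma~\ref{lemma<<primes}, and your derivation (when $\cs{L}$ is a chain, $\{\ut\}$ is prime, so condition (4) gives $(a\ld b)\mt\ut=\ut$ or $(b\ld a)\mt\ut=\ut$, i.e.\ $a\leq b$ or $b\leq a$ by residuation) is exactly the omitted verification. Your remarks on the degenerate case and on the equivalence of $\cs{L}$ and $\K(\cs{L})$ being chains are also sound.
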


A lower-bounded distributive lattice $\m A = (A,\jn,\mt, \bot)$ is said to be \emph{relatively normal} provided its prime ideals form a root-system under set-inclusion. Relatively normal lattices form an important class of lattices.  They include dual relative Stone lattices, the lattice of cozero-sets of any topological space, and the lattices of compact congruences for many well-studied ordered algebraic structures such as semilinear $\ell$-groups, Reisz spaces, $f$-rings, etc.  (The reader is referred to \cite{ST95} and \cite{HT94} for details and an extensive bibliography on this class.) Lemma~\ref{lemma<<primes} implies that the lattice $\K(\cs{L})$ of principal convex subalgebras of an $\ut$-cyclic residuated lattice that satisfies either prelinearity law is relatively normal. 

The next result, due essentially to Monteiro \cites{Mon54, Mon74}, catalogues several conditions that are equivalent to relative normality. 
Its easy proof is left to the reader.  

\begin{proposition} For a lower-bounded, distributive lattice $\m A$, the
following are equivalent:
\begin{enumerate}
\item[\rm{(1)}] $\m A$ is relatively normal;
\item[\rm{(2)}] Any two incomparable prime ideals in $\m A$ have disjoint open neighborhoods in the Stone space of $\m A$;
\item[\rm{(3)}] The join of any pair of incomparable prime filters in $\m A$ is all of $A$; and
\item[\rm{(4)}] For all $a,b \in A$, there exist $u,v \in A$ such that $u\mt v = \bot$ and $u\jn b =a \jn b = a\jn v$.
\end{enumerate}
\end{proposition}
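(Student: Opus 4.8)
The plan is to establish the chain of implications $(1)\Rightarrow(4)\Rightarrow(3)\Rightarrow(2)\Rightarrow(1)$, the main tools being the inclusion-reversing bijection $P\mapsto A\setminus P$ between prime ideals and prime filters of $\m A$ and the prime ideal theorem for distributive lattices. First I would dispose of the two routine steps, which take place in the Stone space of $\m A$; its basic open sets are the $U_a=\{P\colon a\notin P\}$, and $U_a\cap U_b=U_{a\mt b}$ while $U_\bot=\emptyset$ since no prime ideal omits $\bot$. For $(3)\Rightarrow(2)$: given incomparable prime ideals $P_1,P_2$, the complementary prime filters $F_i=A\setminus P_i$ are incomparable, so $(3)$ yields $s\in F_1$ and $t\in F_2$ with $s\mt t=\bot$; then $U_s\ni P_1$ and $U_t\ni P_2$ are disjoint. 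For $(2)\Rightarrow(1)$: if some prime ideal $P$ were contained in two incomparable prime ideals $P_1,P_2$, then, choosing by $(2)$ basic disjoint neighbourhoods $U_a\ni P_1$ and $U_b\ni P_2$, we would have $a\mt b$ in every prime ideal, hence $a\mt b=\bot$ (the intersection of all prime ideals of $\m A$ is $\{\bot\}$, by the prime ideal theorem), while $a,b\notin P$ because $P\subseteq P_1\cap P_2$ --- contradicting the primeness of $P$.

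For $(4)\Rightarrow(3)$ I would take incomparable prime filters $F_1,F_2$, pick $a\in F_2\setminus F_1$ and $b\in F_1\setminus F_2$, and apply $(4)$ to the pair $a,b$ to obtain $u,v$ with $u\mt v=\bot$ and $u\jn b=a\jn b=a\jn v$. Since $b\in F_1$, $a\jn b=u\jn b\in F_1$, and then $a\jn v\in F_1$ with $a\notin F_1$ forces $v\in F_1$ by primeness of $F_1$; symmetrically $a\in F_2$ gives $a\jn b\in F_2$, so $u\in F_2$ (as $b\notin F_2$). Finally $u\mt v=\bot\notin F_1$ and $v\in F_1$ give $u\notin F_1$, so $v\in F_1$ and $u\in F_2$ witness $\bot\in F_1\jn F_2$, i.e. $F_1\jn F_2=A$.

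The remaining implication $(1)\Rightarrow(4)$ is where the work lies. Given $a,b\in A$, I would pass to the interval $\m A'=[\bot,a\jn b]$, a bounded distributive lattice: the map $Q\mapsto\{x\in A\colon x\mt(a\jn b)\in Q\}$ is an order-embedding of the prime-ideal poset of $\m A'$ into that of $\m A$ with inverse $P\mapsto P\cap A'$, so $(1)$ is inherited by $\m A'$; and since $a\jn b$ is the top of $\m A'$, any witnesses $u,v$ found in $\m A'$ serve for $\m A$. Writing $1=a\jn b$, the sets $\Phi^b=\{x\le 1\colon x\jn b=1\}$ and $\Phi^a=\{x\le 1\colon x\jn a=1\}$ are filters of $\m A'$ containing $a$ and $b$, and $(4)$ says exactly that $\Phi^a\jn\Phi^b=A'$, i.e. some element of $\Phi^b$ and some element of $\Phi^a$ meet in $\bot$. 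If this failed, extend the proper filter generated by $\Phi^a\cup\Phi^b$ to a maximal proper filter $U$ of $\m A'$; then $P_0=A'\setminus U$ is a prime ideal disjoint from both $\Phi^a$ and $\Phi^b$. By $(1)$ in $\m A'$ the prime ideals containing $P_0$ form a chain, topped by a unique maximal ideal $M$; since $1=a\jn b\notin M$, at least one of $a,b$ lies outside $M$ --- say $b\notin M$. Then $b$ lies in no prime ideal containing $P_0$, so the ideal generated by $P_0\cup\{b\}$ is all of $A'$, whence $p\jn b=1$ for some $p\in P_0$; thus $p\in\Phi^b\subseteq U$, contradicting $p\in P_0$. (If $a$ is the one outside $M$, argue with $a$ and $\Phi^a$ instead.) So $\Phi^a\jn\Phi^b=A'$, which is $(4)$. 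The crux --- and the only genuine obstacle --- is this last step: it rests on the reduction to a bounded lattice, the existence of maximal filters and maximal (prime) ideals, and the familiar fact that in a bounded distributive lattice a root system of prime ideals is the same as the requirement that every prime ideal lie below a unique maximal ideal; the other implications are routine ideal--filter duality together with the arithmetic of the sets $U_a$.
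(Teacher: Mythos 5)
Your proof is correct and complete; all four implications check out, including the only substantive one, $(1)\Rightarrow(4)$, where the reduction to the bounded interval $[\bot,a\jn b]$, the filters $\Phi^a,\Phi^b$, and the appeal to the prime ideal theorem are exactly what is needed. The paper itself supplies no proof (it explicitly leaves the ``easy proof'' to the reader), and your argument is the standard Monteiro-type one found in the cited literature, so there is nothing to contrast it with.
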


Condition (4) is a key property for the structure of relatively normal lattices. While the relative normality of $\K(\cs{L})$ follows from Lemma \ref{lemma<<primes}, we can use Condition (4) to provide an alternative proof of this fact.

\begin{proposition}\label{relatnormal}
Let $\m L$ be an $\ut$-cyclic residuated lattice that satisfies either prelinerity law. Then $\K(\cs{L})$ is a relatively normal lattice.
\end{proposition}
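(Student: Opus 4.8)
The plan is to verify Condition (4) of the preceding Monteiro-style proposition directly for the lattice $\K(\cs{L})$, using the element-wise description of principal convex subalgebras (Corollary \ref{CS1}) together with the prelinearity law. Say $\m L$ satisfies \prp{LP} (the case of \prp{RP} is symmetric, using the mirror-image of Lemma \ref{lemma<<primes} noted in the text). The compact elements of $\cs{L}$ are exactly the principal convex subalgebras $\C[a]$, $a\in L$, and by Lemma \ref{absvalue} and Corollary \ref{CS1} we may assume every generator is negative. So given two compact elements $A=\C[a]$ and $B=\C[b]$ with $a,b\in L^-$, I must produce compact $U,V\in\K(\cs{L})$ with $U\cap V=\{\ut\}$ (the bottom of $\K(\cs{L})$) and $U\jn B = A\jn B = A\jn V$.

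The natural candidates are $U=\C[(a\ld b)\mt\ut]$ and $V=\C[(b\ld a)\mt\ut]$. First I would check $U\cap V=\{\ut\}$: by Lemma \ref{CS3}, $U\cap V = \C[((a\ld b)\mt\ut)\jn((b\ld a)\mt\ut)]$, and this join equals $\ut$ by \prp{LP}, so $U\cap V=\C[\ut]=\{\ut\}$. Next, the join identities. By Lemma \ref{CS3} again, $A\jn B=\C[a]\jn\C[b]=\C[ab]=\C[a\mt b]$, and similarly $U\jn B=\C[((a\ld b)\mt\ut)\cdot b]$ and $A\jn V=\C[a\cdot((b\ld a)\mt\ut)]$. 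So it suffices to show these three principal convex subalgebras coincide, which by Corollary \ref{CS1} amounts to showing that the three negative generators $a\mt b$, $((a\ld b)\mt\ut)b$, and $a((b\ld a)\mt\ut)$ generate the same principal convex subalgebra — i.e., each is bounded below by a power of another. One inclusion direction is easy: $((a\ld b)\mt\ut)b\leq (a\ld b)b\leq a$ and also $((a\ld b)\mt\ut)b\leq b$, hence $((a\ld b)\mt\ut)b\leq a\mt b$, giving $U\jn B\subseteq A\jn B$ (as expected since $U\subseteq A$ is false in general — rather we get $\C[((a\ld b)\mt\ut)b]\subseteq\C[a\mt b]$). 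The reverse inclusion $A\jn B\subseteq U\jn B$ is the substantive step.

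For the reverse inclusion I would argue as follows. We want $a\mt b$ to lie in $\C[((a\ld b)\mt\ut)b]$, equivalently (Corollary \ref{CS1}) that some power $(((a\ld b)\mt\ut)b)^n\leq a\mt b = |a\mt b|$. Since $((a\ld b)\mt\ut)b\leq b$, it is enough to get such a power below $a$. Here is where \prp{LP} does the real work through the ``exchange'' of $a$ and $b$: from $(a\ld b)\mt\ut$ and $(b\ld a)\mt\ut$ joining to $\ut$ one derives, multiplying out as in Lemma \ref{triviallemma}, that a product of the form $((a\ld b)\mt\ut)^i((b\ld a)\mt\ut)^j$ times a suitable product involving $b$ drops below $a$; concretely, $((b\ld a)\mt\ut)b\leq a$ and $b\cdot((a\ld b)\mt\ut)$ sit symmetrically, and combining $a\jn$ with the join $= \ut$ one squeezes $a\mt b$ between a power of $((a\ld b)\mt\ut)b$ (possibly after also invoking the symmetric generator) and $\ut$. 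I expect this squeezing computation — getting the exponents to line up so that a single power of $((a\ld b)\mt\ut)b$ falls below $a\mt b$, using only \prp{LP}, the basic residuation inequalities of Lemma \ref{residuals}(3), and Lemma \ref{triviallemma} — to be the main obstacle; everything else is bookkeeping with Corollary \ref{CS1} and Lemma \ref{CS3}. Once the three generators are shown to generate the same convex subalgebra, Condition (4) holds and the cited proposition gives that $\K(\cs{L})$ is relatively normal.
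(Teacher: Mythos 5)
Your overall strategy---verifying Monteiro's condition (4) in $\K(\cs{L})$ with witnesses built from the prelinearity terms $(a\ld b)\mt\ut$ and $(b\ld a)\mt\ut$---is exactly the paper's, and the disjointness step $U\cap V=\C\bigl[((a\ld b)\mt\ut)\jn((b\ld a)\mt\ut)\bigr]=\C[\ut]=\{\ut\}$ is correct. But you have paired the witnesses the wrong way around, and the identity you set out to prove is false as stated. The identity that actually holds is
$\C[(b\ld a)\mt\ut]\jn\C[b]=\C[a]\jn\C[b]=\C[a]\jn\C[(a\ld b)\mt\ut]$,
i.e.\ the term $(a\ld b)\mt\ut$ must be joined with $\C[a]$, not with $\C[b]$. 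With your pairing, take $\m L=\R$ viewed as an Abelian $\ell$-group, $a=-1$, $b=0=\ut$: then $(a\ld b)\mt\ut=(b-a)\mt 0=\ut$, so $U=\{\ut\}$ and $U\jn B=\C[b]=\{\ut\}$, while $A\jn B=\C[a]=\R$. The source of the confusion is visible in your ``easy direction'': the inequality $(a\ld b)b\leq a$ is not a residuation law. What residuation gives is $a(a\ld b)\leq b$ and $b(b\ld a)\leq a$, and it is the second of these that makes $(b\ld a)\mt\ut$ the right partner for $b$.

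Beyond the pairing, the substantive containment is left as a conjecture (``I expect this squeezing computation \dots to be the main obstacle''), so the argument is incomplete even after the swap. The paper closes it in two lines and without any iterated use of \prp{LP}: with $v=(a\ld b)\mt\ut$ and $a,b\in L^-$, one has $ab\leq a\mt v\leq\ut$, so $a\mt v\in\C[a]\jn\C[b]$ by convexity; conversely $(a\mt v)^2\leq a^2\leq a$ and $(a\mt v)^2\leq a(a\ld b)\leq b$, hence $(a\mt v)^2\leq a\mt b\leq\ut$ and $a\mt b\in\C[a]\jn\C[v]$. Note also that \prp{LP} is needed only for the disjointness $\C[u]\cap\C[v]=\{\ut\}$; the two join identities hold in any $\ut$-cyclic residuated lattice. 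If you correct the pairing and supply the squaring computation, your proof becomes essentially the paper's.
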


\begin{proof} Without loss of generality, we will assume that $\m L$ satisfies \prp{LP}. We have already established  that $\K(\cs{L})$ is a lower-bounded distributive lattice, whose elements are the principal convex subalgebras of $\m L$. Let $X,Y \in \K(\cs{L})$.  By Corollary \ref{CS1}, there exist $a,b\in L^-$ such that $X = \C[a]$ and $Y =\C[b]$.  
Let $u = (b \ld a)\mt\ut$ and $v = (a \ld b)\mt\ut$.
We know by Lemma \ref{CS3} that $\C[u] \cap \C[v] = \C[u \jn v]$.
Consequently, since $u \jn v = ((a \ld b)\mt\ut) \jn ((b \ld a)\mt\ut) = \ut$, 
we see that $\C[u] \cap \C[v] = \{\ut\}$.  

To complete the proof, we need to show that $\C[a] \jn \C[v] = \C[a] \jn \C[b] = \C[u] \jn \C[b]$.
We just prove that $\C[a] \jn \C[v] = \C[a] \jn \C[b]$; the proof of the other equality is analogous.

To prove that $\C[a] \jn \C[v] \subseteq \C[a] \jn \C[b]$, it suffices, by
Lemma \ref{CS3}, to prove that $a \mt v \in \C[a] \jn \C[b]$.  Since $a \leq \ut$ by assumption, we know $a \mt v = a \mt (a \ld b)$.  By definition, $\C[a] \jn \C[b] = \C[\{a,b\}]$; hence, it is clear that $a \land v \in \C[a] \jn \C[b]$. 

To obtain the reverse inclusion, it suffices by Lemma \ref{CS3}  to prove
that $a \mt b \in \C[a] \jn \C[v]$.  Observe that
$(a \mt v)^2 = (a\mt(a\ld b))^2\leq a^2 \leq a,$ and
$(a \mt v)^2 = (a\mt(a\ld b))^2\leq a(a\ld b)\leq b$. It follows that $(a \mt v)^2 \leq a \mt b$.  Since $a \mt b \leq \ut$ by assumption,
convexity now implies that $a \mt b \in \C[a] \jn \C[v]$.
\end{proof}

We close this section by discussing the relationship of principal polars with minimal prime convex subalgebras. A result that has appeared in a number of papers, ranging from $\ell$-groups to MV-algebras and other more general classes of bounded residuated lattices, states that if $H$ is a minimal prime convex subalgebra (called filter or prefilter in this case) of such an algebra, then $H=\bigcup\{x^\perp\colon x\not\in H\}.$ We want to draw attention to \cite{Tsi79} where it is shown that this is truly a result about algebraic distributive lattices, and has little to do with particular properties of the algebras in question. For the convenience of the reader, we provide the details of this relationship.

In the remainder of this section, $\m A$ will denote a nontrivial algebraic distributive lattice whose poset $\K(\m A)$ of compact elements forms a sublattice. The top and bottom elements of $\m A$ will be denoted by $\top$ and $\bot$, respectively. We use $a\ra b$ to denote the relative pseudocomplement of $a$ relative to $b$, and write $\neg a$ for the pseudocomplement $a\ra\bot$ of $a$.

\begin{lemma}\label{meetirred}
Let $p$ be a meet-irreducible element of $\m A$,  let $a\in \K(\m A)$, and let $b\in A$ such that $a\ra b\leq p$. Then there exists a meet-irreducible element $q$ such that $a\not\leq q$ and $b\leq q\leq p.$ 
\end{lemma}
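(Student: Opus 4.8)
The plan is to use Zorn's Lemma to extract the meet-irreducible $q$ directly, working in the interval of elements of $\m A$ lying above $b$ but failing to lie above $a$. First I would observe that the hypotheses are consistent in the required sense: since $a\ra b\le p$ and $a\ra b$ is the largest element whose meet with $a$ lies below $b$, the element $b$ itself satisfies $a\not\le b$ unless $b=\top$ — but if $b=\top$ then $a\ra b=\top\le p$ forces $p=\top$, contradicting meet-irreducibility of $p$ in the nontrivial lattice $\m A$ (a top element is not meet-irreducible in a nontrivial lattice under the usual convention, or is excluded); so we may assume $a\not\le b$. Thus the set $\mathcal S=\{x\in A: b\le x\le p,\ a\not\le x\}$ is nonempty, containing $b$.

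Next I would verify that $\mathcal S$ is closed under joins of nonempty chains, which is where algebraicity of $\m A$ enters. If $\{x_i\}_{i\in I}$ is a chain in $\mathcal S$ with join $x=\bigvee_i x_i$, then $b\le x\le p$ is clear; the point is $a\not\le x$. Here I use that $a$ is \emph{compact}: if $a\le\bigvee_i x_i$, then since the $x_i$ form an up-directed set, compactness of $a$ yields $a\le x_i$ for some single $i$, contradicting $x_i\in\mathcal S$. Hence $x\in\mathcal S$, and by Zorn's Lemma $\mathcal S$ has a maximal element $q$. By construction $b\le q\le p$ and $a\not\le q$, so it remains only to check that $q$ is meet-irreducible.

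For meet-irreducibility, suppose $q=u\mt v$ with $u,v>q$. Since $a\not\le q=u\mt v$, at least one of $u,v$ fails to lie above... no — rather, the maximality of $q$ in $\mathcal S$ is the tool: $u$ and $v$ strictly exceed $q$, and both satisfy $b\le u$, $b\le v$; if both also satisfied $u\le p$ and $v\le p$ and $a\not\le u$, $a\not\le v$, then $u,v\in\mathcal S$ would contradict maximality of $q$. So for each of $u,v$ at least one of the three defining conditions of $\mathcal S$ fails. The condition $b\le{}$ holds for both. The condition $\le p$ need not hold — and this is the subtle point — so I must not assume it. Instead I argue: since $a\not\le q$ but we want a contradiction, note $a\le u$ or $u\not\le p$, and likewise $a\le v$ or $v\not\le p$. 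Consider the first: $a\le u$. If also $a\le v$, then $a\le u\mt v=q$, contradiction. So $a\not\le v$, hence $v\not\le p$; but then $q=u\mt v\le u$, and ... this branch needs the distributivity of $\m A$ together with $q\le p$. Using $q\le p$ and $p$ meet-irreducible would be the clean route, but $p$ meet-irreducible only controls factorizations of $p$, not of $q$.

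The cleaner fix, which I would adopt: replace the set $\mathcal S$ by $\{x\in A: b\le x,\ a\not\le x\}$ \emph{without} the ceiling $x\le p$, run Zorn's Lemma (again using compactness of $a$ for chains) to get a maximal such $q_0$; such a maximal element is automatically meet-irreducible by the standard argument (if $q_0=u\mt v$ with $u,v>q_0$ then $a\le u$ and $a\le v$ by maximality, so $a\le q_0$, contradiction). Then separately I must arrange $q_0\le p$; for this I instead apply Zorn inside the interval $[b,p]$ but use that $a\ra b\le p$ to guarantee the maximal element found there is meet-irreducible \emph{in} $\m A$: if $q=u\mt v$ in $\m A$ with $u,v>q$, then $u\mt v\le p$ while $u,v$ need not be below $p$, yet $(u\jn(a\ra b))\mt a\le ?$... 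The honest main obstacle is exactly reconciling the ceiling $\le p$ with meet-irreducibility, and I expect the resolution to go through Lemma-style reasoning with the relative pseudocomplement: one shows $a\ra q \le p$ whenever $q\in\mathcal S$ is maximal, then uses $p$'s meet-irreducibility on the factorization $p = (p\jn u)\mt(p\jn v)$ forced by a would-be nontrivial factorization of $q$, together with distributivity of $\m A$. I would spell this out carefully; everything else (nonemptiness, chain-closure via compactness, the $b=\top$ degenerate case) is routine.
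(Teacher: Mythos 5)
Your proposal correctly assembles the outer scaffolding (nonemptiness via $a\not\leq b$, chain-closure via compactness of $a$, the degenerate case $b=\top$), but the one step that carries the content of the lemma --- meet-irreducibility of the maximal element --- is left open, and neither of the routes you sketch can close it. If you maximize over $\mathcal{S}=\{x\colon b\leq x\leq p,\ a\not\leq x\}$, a maximal element need not be meet-irreducible, and the auxiliary claim you hope for (``$a\ra q\leq p$ whenever $q$ is maximal in $\mathcal{S}$'') is false. Concretely, let $\m A$ be the lattice of downsets of the poset $\{\rho,\sigma,\tau\}$ with $\tau<\rho$ and $\sigma$ incomparable to both, and take $a=p=\{\sigma,\tau\}$ (meet-irreducible) and $b=\emptyset$, so $a\ra b=\emptyset\leq p$. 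Then $\mathcal{S}=\{\emptyset,\{\tau\},\{\sigma\}\}$ has the maximal element $\{\tau\}=\{\sigma,\tau\}\mt\{\tau,\rho\}$, which is not meet-irreducible, and $a\ra\{\tau\}=\{\tau,\rho\}\not\leq p$; Zorn's Lemma gives you no way to prefer the good maximal element $\{\sigma\}$. Your ``cleaner fix'' of dropping the ceiling does yield a meet-irreducible maximal element, but in the same example it may return $\{\tau,\rho\}$, which is not below $p$, exactly as you feared.

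The missing idea is the choice of the Zorn set itself: one maximizes over $S=\{s\in A\colon s\geq b \text{ and } a\ra s\leq p\}$. The single condition $a\ra s\leq p$ packages all three of your constraints --- it gives $s\leq p$ since $s\leq a\ra s$, and $a\not\leq s$ since $p\neq\top$ --- and, unlike your conditions, it is preserved through the meet-irreducibility argument. If $q$ is maximal in $S$ and $x\mt y\leq q$ with $x,y\not\leq q$, then $a\ra(q\jn x)\not\leq p$ and $a\ra(q\jn y)\not\leq p$, so there are compact $c_1,c_2\not\leq p$ with $a\mt c_1\leq q\jn x$ and $a\mt c_2\leq q\jn y$; distributivity gives $a\mt(c_1\mt c_2)\leq(q\jn x)\mt(q\jn y)=q\jn(x\mt y)=q$, so $c_1\mt c_2\leq a\ra q$, while $c_1\mt c_2$ is compact (this is where the hypothesis that $\K(\m A)$ is a sublattice enters) and $c_1\mt c_2\not\leq p$ by meet-irreducibility of $p$ together with distributivity --- contradicting $a\ra q\leq p$. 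Chain-closure of $S$ again follows from compactness. This reformulation via the relative pseudocomplement is precisely the ingredient your proposal circles around but never lands on, so as written the proof has a genuine gap.
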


\begin{proof}
Let $S=\{s\in A\colon s\geq b \text{ and } a\ra s\leq p\}.$ Note that $S\neq\emptyset$ since $b\in S$. By Zorn's Lemma, $S$ has a maximal element $q$. We claim that $q$ is meet-irreducible. Let us assume that this is not the case. Then there exist elements $x, y\in A$ such that $x\not\leq q, y\not\leq q, \text{ and } x\mt y\leq q$. By the maximality of $q$, $a\ra (q\jn x)\not\leq p$, and $a\ra (q\jn y)\not\leq p$. Thus, there exist compact elements $c_1, c_2\not\leq p$ such that  $c_1 \leq a\ra (q\jn x)$ and $c_2 \leq a\ra (q\jn y)$. 
The last two inequalities are equivalent to $a\mt c_1\leq {q\jn x}$ and $a\mt c_2\leq q\jn y$. 
So $a\mt(c_1\mt c_2)\leq (q\jn x)\mt(q\jn y)=q\jn(x\mt y)=q$. Now $c_1\mt c_2\in\K(\m A) \text { and } c_1\mt c_2\not\leq p$. 
Thus, $c_1\mt c_2\leq a\ra q$ and $c_1\mt c_2\not\leq p,$ showing that $a\ra q\not\leq p,$ contradicting the fact that $q\in S$. We have shown that $q$ is meet-irreducible. Lastly, $a\ra q\leq p$ implies that $q\leq p$.
\end{proof}

The minimal elements of the partially ordered set of meet-irreducible elements of $\m A$ will be referred to as \emph{minimal meet-irreducible} elements of $\m A$. 

We state the following result for future reference:

\begin{lemma}\label{minmeetirred2}
Every meet-irreducible element of $\m A$ exceeds a minimal meet-irreducible element. In particular, the meet of all minimal meet-irreducible elements is the bottom element $\bot$.
\end{lemma}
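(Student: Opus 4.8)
The plan is to prove the two assertions of Lemma~\ref{minmeetirred2} in turn, using Lemma~\ref{meetirred} as the engine for the first one. For the first claim --- that every meet-irreducible element exceeds a minimal one --- I would fix a meet-irreducible element $p$ of $\m A$ and consider the set $T$ of all meet-irreducible elements $q$ with $q\leq p$. This set is nonempty since $p\in T$. The goal is to apply Zorn's Lemma \emph{downward}, so I need to check that every chain in $T$ has a lower bound in $T$. Let $\{q_i\colon i\in I\}$ be such a chain and put $q=\bigwedge_i q_i$; clearly $q\leq p$, and $q$ lies below each $q_i$, so the only thing to verify is that $q$ is again meet-irreducible. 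This is where Lemma~\ref{meetirred} enters.

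To see that $q=\bigwedge_i q_i$ is meet-irreducible, suppose instead that $x\mt y\leq q$ with $x\not\leq q$ and $y\not\leq q$. Since $\m A$ is algebraic, pick compact elements $a\leq x$ and $a'\leq y$ with $a\not\leq q$ and $a'\not\leq q$; replacing $a$ by $a\jn a'$ (which is compact, as $\K(\m A)$ is a sublattice) we may take a single compact $a$ with $a\not\leq q$ but $a\mt a\leq$ (roughly) $q$ --- more precisely, I would argue directly with the two compacts and the join-infinite distributive law. The key point: because $a\not\leq q=\bigwedge_i q_i$, there is some index $i_0$ with $a\not\leq q_{i_0}$; and since the $q_i$ form a chain below this point, $a\not\leq q_i$ for all $i$ with $q_i\leq q_{i_0}$, which is cofinal in the chain. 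Now $a\ra q = a\ra\bigwedge_i q_i = \bigwedge_i(a\ra q_i)$ by the fact that relative pseudocomplement preserves meets in the second argument (a consequence of the join-infinite distributive law). For each $i$ in the cofinal part of the chain, $q_i$ is meet-irreducible with $a\not\leq q_i$, hence $a\ra q_i = q_i$ (standard: in a distributive algebraic lattice, if $a\not\leq q$ and $q$ is meet-irreducible then $a\mt c\leq q$ forces $c\leq q$, so $a\ra q\leq q$, and the reverse inequality is trivial). Therefore $a\ra q = \bigwedge_i q_i = q$. Applying Lemma~\ref{meetirred} with $p:=q$, $a:=a$, $b:=q$ (so $a\ra b = a\ra q = q\leq p=q$ holds) yields a meet-irreducible $q'$ with $a\not\leq q'$ and $q\leq q'\leq q$, i.e.\ $q'=q$; but then $q$ itself is meet-irreducible, contradicting our assumption. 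Hence $q$ is meet-irreducible, $q\in T$, and $T$ has a minimal element by Zorn, which is then a minimal meet-irreducible element below $p$.

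For the second assertion, let $m=\bigwedge\{q\colon q \text{ minimal meet-irreducible}\}$ and suppose $m>\bot$. Then, picking a compact $a$ with $\bot<a\leq m$, we have $a\leq q$ for every minimal meet-irreducible $q$. On the other hand, in any algebraic distributive lattice every element is the meet of the meet-irreducible elements above it; in particular $\bot$ is the meet of \emph{all} meet-irreducible elements, so there is a meet-irreducible $p$ with $a\not\leq p$. By the first part, $p$ exceeds some minimal meet-irreducible $q_0\leq p$, and then $a\leq q_0\leq p$ contradicts $a\not\leq p$. Hence $m=\bot$.

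\textbf{Main obstacle.} The routine-looking but genuinely load-bearing step is the verification that the meet of a chain of meet-irreducible elements is meet-irreducible; this is exactly what Lemma~\ref{meetirred} was set up to deliver, and the care needed is in handling the compactness/chain-cofinality argument to arrange the hypothesis $a\ra q\leq p$ of that lemma. Everything else (downward Zorn, and the final ``$\bot$ is a meet of meet-irreducibles'' argument) is formal once the chain condition is in hand.
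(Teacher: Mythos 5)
Your overall strategy --- downward Zorn on the set of meet-irreducible elements below $p$ for the first claim, and the fact that $\bot$ is a meet of (completely) meet-irreducible elements for the second --- is exactly the paper's, whose proof consists of precisely these two remarks; your final paragraph on the second assertion is fine. The problem lies in the step you yourself identify as load-bearing: the invocation of Lemma~\ref{meetirred} with $p:=q$. That lemma \emph{hypothesizes} that $p$ is meet-irreducible, and its proof uses this essentially (it needs $c_1\not\leq p$ and $c_2\not\leq p$ to force $c_1\mt c_2\not\leq p$). At the point where you apply it, $q=\bigwedge_i q_i$ is the very element whose meet-irreducibility is in question --- indeed you have just assumed, for contradiction, that it fails --- so the application is circular and the step as written is invalid.

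The gap is easily repaired, and in fact your own intermediate computation already closes it. Recall that in this paper ``meet-irreducible'' is the meet-prime condition: $x\mt y\leq p$ implies $x\leq p$ or $y\leq p$. With that definition the meet of a chain of meet-irreducible elements is meet-irreducible by a two-line argument needing no compactness and no Lemma~\ref{meetirred}: if $x\mt y\leq q=\bigwedge_i q_i$ while $x\not\leq q_{i_0}$ and $y\not\leq q_{i_1}$, then the smaller of the comparable elements $q_{i_0},q_{i_1}$ is a single meet-irreducible $q_i$ with $x\not\leq q_i$, $y\not\leq q_i$ and $x\mt y\leq q\leq q_i$, a contradiction. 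Alternatively, if you wish to keep your route: once you have shown $a\ra q=q$ for every compact $a\not\leq q$ (that part of your computation is correct), you are already done --- given $x\mt y\leq q$ with $x\not\leq q$, choose a compact $a\leq x$ with $a\not\leq q$; then $a\mt y\leq q$ gives $y\leq a\ra q=q$. Either repair supplies the chain condition, after which Zorn's Lemma and your argument for the second assertion go through as in the paper.
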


\begin{proof}
A direct application of Zorn's Lemma implies that every meet-irredu\-ci\-ble element exceeds a minimal meet-irreducible element. The second statement is a direct consequence of the fact that in an algebraic lattice every element is a meet of meet-irreducible (in fact, completely meet-irreducible) elements.
\end{proof}

The next result provides a useful characterization of minimal meet-irredu\-ci\-ble elements. 

\begin{lemma}\label{minmeetirred3}
For a meet-irreducible element $p$ of $\m A$, the following statements are equivalent:
\begin{enumerate}
\item[\rm{(1)}] $p$ is a minimal meet-irreducible element;
\item[\rm{(2)}] for all $c\in \K(\m A), c\leq p \text{ implies } \neg c \not\leq p$;
\item[\rm{(3)}] $p=\bigvee\{\neg c\colon c\in \K(\m A), c\not\leq p\}$.
\end{enumerate}
\end{lemma}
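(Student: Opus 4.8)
The plan is to establish the cycle of implications $(1)\Rightarrow(2)\Rightarrow(3)\Rightarrow(1)$. Two facts about $\m A$ will be used throughout. First, since $\m A$ is distributive, every meet-irreducible element is in fact meet-\emph{prime}: if $x\mt y\leq q$ with $q$ meet-irreducible, then $q=q\jn(x\mt y)=(q\jn x)\mt(q\jn y)$ forces $x\leq q$ or $y\leq q$. Second, since $\m A$ is algebraic, each $a\in A$ is the join of the compact elements below it, and, for $c\in\K(\m A)$ and $e\in A$, the (relative) pseudocomplement satisfies $c\leq\neg e \Leftrightarrow c\mt e=\bot \Leftrightarrow e\leq\neg c$, together with $c\mt\neg c=\bot$. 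For $(1)\Rightarrow(2)$ I will argue by contradiction: if some $c\in\K(\m A)$ had $c\leq p$ and $\neg c\leq p$, then applying Lemma~\ref{meetirred} with $a=c$ and $b=\bot$ (note $a\ra b=c\ra\bot=\neg c\leq p$) yields a meet-irreducible $q$ with $c\not\leq q$ and $\bot\leq q\leq p$; since $c\leq p$ but $c\not\leq q$ we get $q<p$, contradicting the minimality of $p$.

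For $(2)\Rightarrow(3)$, set $d=\bigvee\{\neg c\colon c\in\K(\m A),\ c\not\leq p\}$. The inclusion $d\leq p$ is immediate: for each compact $c\not\leq p$, from $c\mt\neg c=\bot\leq p$ and meet-primeness of $p$ we obtain $\neg c\leq p$, so every generator of $d$ lies below $p$. For the reverse inclusion it suffices to show $e\leq d$ for each compact $e\leq p$; here $(2)$ gives $\neg e\not\leq p$, so writing $\neg e$ as the join of the compact elements below it we find a compact $c\leq\neg e$ with $c\not\leq p$ (otherwise $\neg e\leq p$), and then $e\leq\neg c\leq d$. Hence $p=\bigvee\{e\in\K(\m A)\colon e\leq p\}\leq d$, which gives $(3)$.

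For $(3)\Rightarrow(1)$: $p$ is meet-irreducible by hypothesis, so only minimality needs checking. If $q$ is meet-irreducible with $q\leq p$, then every compact $c\not\leq p$ also satisfies $c\not\leq q$ (else $c\leq q\leq p$), so $c\mt\neg c=\bot\leq q$ and meet-primeness of $q$ force $\neg c\leq q$; taking the join over all such $c$ and invoking $(3)$ gives $p\leq q$, hence $q=p$. Thus no meet-irreducible element lies strictly below $p$, i.e. $p$ is a minimal meet-irreducible element.

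The only step that requires more than bookkeeping is the inclusion $p\leq d$ in $(2)\Rightarrow(3)$: this is precisely where algebraicity is essential, since it lets us split $\neg e$ into compact pieces and extract one that escapes $p$, thereby converting the negative statement $(2)$ into the positive formula $(3)$. Lemma~\ref{meetirred} carries $(1)\Rightarrow(2)$, and the remaining arguments are routine applications of meet-primeness and the pseudocomplement identities recorded above.
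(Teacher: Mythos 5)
Your proof is correct, and the cycle $(1)\Rightarrow(2)\Rightarrow(3)\Rightarrow(1)$ together with the first two implications matches the paper's argument almost verbatim: $(1)\Rightarrow(2)$ is the same application of Lemma~\ref{meetirred} with $b=\bot$, and $(2)\Rightarrow(3)$ is the same two-sided comparison, where you merely make explicit the detail (left as ``clear'' in the paper) that $\neg c\leq p$ for compact $c\not\leq p$ via meet-primeness. The one place you genuinely diverge is $(3)\Rightarrow(1)$. The paper argues by contradiction: it picks a meet-irreducible $q<p$ and a compact $t\leq p$ with $t\not\leq q$, uses compactness of $t$ to extract finitely many generators $\neg c_1,\dots,\neg c_n$ with $t\leq\neg c_1\jn\dots\jn\neg c_n$, and then applies the law $\neg(\neg c_1\jn\dots\jn\neg c_n)=\neg\neg c_1\mt\dots\mt\neg\neg c_n$ to reach the contradiction $c_1\mt\dots\mt c_n\leq p$. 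You instead show directly that any meet-irreducible $q\leq p$ absorbs every generator $\neg c$ (since $c\mt\neg c=\bot\leq q$ and $c\not\leq q$ force $\neg c\leq q$ by meet-primeness), whence $p\leq q$ by $(3)$. Your version is shorter, avoids both the compactness argument and the de~Morgan identity for pseudocomplements, and is arguably the cleaner of the two; the paper's version has the minor virtue of exhibiting concretely which finite meet of compacts witnesses the failure of minimality. Both are sound.
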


\begin{proof} 
(1) implies (2): Assume that (2) fails. Then there exists $a\in \K(\m A)$ such that $a,\neg a \leq p$. 
By Lemma \ref{meetirred}  for $b=\bot$, there exists  a meet-irreducible element $q$ such that $a\not\leq q$ and $b\leq q\leq p$. However, since $a\leq p, $ it follows that $q < p$, and hence $p$ is not a minimal meet-irreducible element of $\m A$.

(2) implies (3): Set $x=\bigvee\{\neg c\colon c\in \K(\m A), c\not\leq p\}.$ It is clear that $x\leq p$. Let now $t\in \K(\m A), t\leq p$. By (2), $\neg t\not\leq p$, and hence there exists compact $s\not\leq p$ such that $t\mt s=\bot$. It follows that $t\leq \neg s\leq x$. This shows that $x=p$.

(3) implies (1): Suppose that (1) fails and (3) holds. Let $q<p$ be a meet-irreducible element. Let $t\in  \K(\m A)$ such that $t\leq p$ and $t\not\leq q$. Now $t\mt \neg t=\bot, $ and hence $\neg t\leq q<p$. Thus, $t, \neg t\leq p$. Since $p=\bigvee\{\neg c\colon c\in \K(\m A), c\not\leq p\},$ there exist compact elements $c_1, \dots, c_n\not\leq p$ such that $t\leq \neg c_1\jn\dots\jn \neg c_n$. So $\neg(\neg c_1\jn\dots\jn \neg c_n)=\neg\neg c_1\mt\dots\mt \neg\neg c_n\leq \neg t\leq p.$  This yields the contradictory inequality $c_1\mt\dots\mt c_n\leq p$.
\end{proof}

Lemmas~\ref{minmeetirred2} and~\ref{minmeetirred3} have the following corollaries in the setting of $\ut$-cyclic residuated lattices.

\begin{proposition}\label{minprime1}
Every prime convex subalgebra of an $\ut$-cyclic residuated lattice $\m L$ exceeds a minimal prime convex subalgebra. In particular, the intersection of all minimal prime convex subalgebras of $\m L$ is $\{\ut\}$.
\end{proposition}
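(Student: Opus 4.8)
The plan is to obtain the proposition as an immediate specialization of the abstract results proved in this section, taking $\m A = \cs{L}$. The first step is to verify that $\cs{L}$ meets the standing hypotheses imposed on $\m A$: by Theorem~\ref{CS8}, $\cs{L}$ is an algebraic distributive lattice whose poset $\K(\cs{L})$ of compact elements is a sublattice, and its bottom element is $\{\ut\}$. These hypotheses are stated for a \emph{nontrivial} $\m A$, so I would first dispose of the degenerate case: if $\m L$ is trivial then $L=\{\ut\}$, there are no (proper, hence meet-irreducible) prime convex subalgebras, and the intersection of the empty family is the top element $\{\ut\}$, so the statement holds trivially. Henceforth one assumes $\m L$ nontrivial, so that $\cs{L}$ is a nontrivial algebraic distributive lattice of the required kind.

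The second step is a translation of terminology: by the definition adopted before Lemma~\ref{lemma<<primes}, a convex subalgebra $H$ of $\m L$ is prime exactly when it is meet-irreducible in $\cs{L}$; consequently the minimal prime convex subalgebras of $\m L$ are precisely the minimal meet-irreducible elements of $\cs{L}$. Moreover, since $\cs{L}$ is an algebraic closure system, meets in $\cs{L}$ are set-theoretic intersections.

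With these identifications in place, the proposition is exactly Lemma~\ref{minmeetirred2} read in the lattice $\m A=\cs{L}$: its first assertion becomes the statement that every prime convex subalgebra of $\m L$ exceeds a minimal prime convex subalgebra, and its second assertion becomes the statement that the intersection of all minimal prime convex subalgebras of $\m L$ equals the bottom element $\{\ut\}$ of $\cs{L}$. I do not anticipate any genuine obstacle here: the real work has already been done in Theorem~\ref{CS8} (which certifies $\cs{L}$ as an admissible $\m A$) and in Lemma~\ref{minmeetirred2} (the abstract statement), and the only points requiring a word of care are the trivial-algebra case and the correct matching of the lattice-theoretic vocabulary ($\text{prime}\leftrightarrow\text{meet-irreducible}$, $\text{intersection}\leftrightarrow\text{meet}$, $\{\ut\}\leftrightarrow\bot$) with the residuated-lattice vocabulary.
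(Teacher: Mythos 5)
Your proposal is correct and matches the paper exactly: the paper states Proposition~\ref{minprime1} without proof as an immediate corollary of Lemma~\ref{minmeetirred2} applied to $\m A=\cs{L}$, whose admissibility is guaranteed by Theorem~\ref{CS8}. Your added care about the trivial algebra and the vocabulary translation (prime $\leftrightarrow$ meet-irreducible, intersection $\leftrightarrow$ meet, $\{\ut\}\leftrightarrow\bot$) is sound but goes beyond what the paper bothers to record.
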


\begin{proposition}\label{minprime2}
For a prime convex subalgebra $P$ of an $\ut$-cyclic residuated lattice $\m L$, the following statements are equivalent:
\begin{enumerate}
\item[\rm{(1)}] $P$ is a minimal prime convex subalgebra of $\m L$;
\item[\rm{(2)}] for all $x\in L, x\in P \text{ implies } x^\perp \not\subseteq P$; and
\item[\rm{(3)}] $P=\bigcup\{x^\perp\colon x\in L\setminus P\}$.
\end{enumerate}
\end{proposition}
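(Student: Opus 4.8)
The plan is to obtain Proposition~\ref{minprime2} by specializing Lemma~\ref{minmeetirred3} to the algebraic distributive lattice $\m A=\cs{L}$. By Theorem~\ref{CS8}, $\cs{L}$ is an algebraic distributive lattice whose poset $\K(\cs{L})$ of compact elements -- the principal convex subalgebras of $\m L$ -- forms a sublattice, so the standing hypotheses on $\m A$ are met, with bottom element $\{\ut\}$. The translation dictionary I would set up is: a convex subalgebra is \emph{prime} precisely when it is meet-irreducible in $\cs{L}$ (by definition), and minimal prime convex subalgebras correspond to minimal meet-irreducible elements; by Theorem~\ref{CS8}, Corollary~\ref{CS1} and Lemma~\ref{absvalue}, every compact element of $\cs{L}$ has the form $\C[a]$ with $a\in L^-$, and $\C[a]\subseteq P$ iff $a\in P$; and by Lemma~\ref{polars} together with the remark following it, the pseudocomplement of a compact element $\C[a]$ in $\cs{L}$ is $\neg\C[a]=\C[a]\ra\{\ut\}=a^\perp$. (Proposition~\ref{minprime1} is obtained in exactly the same way from Lemma~\ref{minmeetirred2}.)

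With this dictionary, the equivalence (1)$\Leftrightarrow$(2) I would simply read off. Condition~(2) of Lemma~\ref{minmeetirred3} states that for every compact $c$, $c\subseteq P$ implies $\neg c\not\subseteq P$; for $c=\C[a]$ with $a\in L^-$ this says ``$a\in P$ implies $a^\perp\not\subseteq P$''. Since by Lemma~\ref{absvalue} and Corollary~\ref{CS1} an arbitrary $x\in L$ may be replaced by $\abs{x}\in L^-$ without altering $\C[x]$ or $x^\perp$, this is exactly Condition~(2) of the proposition; hence (1)$\Leftrightarrow$(2) is immediate.

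For (1)$\Leftrightarrow$(3), Condition~(3) of Lemma~\ref{minmeetirred3} becomes $P=\bigvee\{a^\perp\colon a\in L^-,\ \C[a]\not\subseteq P\}=\bigvee\{x^\perp\colon x\in L\setminus P\}$, the join being taken in $\cs{L}$. So the real content is to show that this join coincides with the set-theoretic union $\bigcup\{x^\perp\colon x\in L\setminus P\}$ appearing in Condition~(3) of the proposition; this is the step I expect to require the most care, and it is the only place where the primeness of $P$ enters beyond Lemma~\ref{minmeetirred3}. I would argue that the family $\{x^\perp\colon x\in L\setminus P\}$ is upward directed: given $x,y\in L\setminus P$, put $z=\abs{x}\jn\abs{y}\in L^-$; by Lemma~\ref{CS3}, $\C[z]=\C[x]\cap\C[y]$, which is not contained in $P$ since $P$ is meet-irreducible and neither $\C[x]$ nor $\C[y]$ is, so $z\in L\setminus P$, while $\C[z]\subseteq\C[x]$ and $\C[z]\subseteq\C[y]$ yield $x^\perp\subseteq z^\perp$ and $y^\perp\subseteq z^\perp$ because $^\perp$ reverses inclusion. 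An up-directed union of convex subalgebras is again a convex subalgebra, so this union lies in $\cs{L}$, and being the least convex subalgebra containing every $x^\perp$ it coincides with their join in $\cs{L}$. Thus Condition~(3) of the proposition ($P$ equals that union) is equivalent to Condition~(3) of Lemma~\ref{minmeetirred3} ($P$ equals that join), and combining the three translated equivalences completes the proof.
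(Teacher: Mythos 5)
Your proposal is correct and follows exactly the route the paper takes: the paper's proof consists of the single remark that the statement follows from Lemma~\ref{minmeetirred3} applied to $\cs{L}$, with the join in Condition~(3) replaced by a union because the family $\{x^\perp\colon x\in L\setminus P\}$ is up-directed. You have merely spelled out the translation dictionary and the up-directedness argument (via $z=\abs{x}\jn\abs{y}$ and primeness of $P$) in more detail, and both are accurate.
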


\begin{proof}
The proof follows directly from Lemma~\ref{minmeetirred3}. In Condition (3), join can be replaced by union because the set $\{x^\perp\colon x\in L\setminus P\}$ is up-directed.
\end{proof}

Another result that has been proved for various particular classes of residuated lattices is Proposition \ref{polars-primes} below. It is but a direct translation of the following lattice-theoretical result (which does not require that $\K(\m A)$ be a sublattice of $\m A$, see \cite{ST95}).

\begin{lemma}
Let $\m A$ be an algebraic distributive lattice.
For every $a\in A$, 
$\neg a=\bigwedge\{p\in A\colon p \text{ minimal meet-irreducible and } a\nleq p\}$.
\end{lemma}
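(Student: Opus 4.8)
The plan is to show the two inclusions $\neg a \leq \bigwedge\{p : p \text{ minimal meet-irreducible}, a \nleq p\}$ and the reverse. Throughout, write $M$ for the set of minimal meet-irreducible elements $p$ of $\m A$ with $a \nleq p$, and let $w = \bigwedge M$ (with $w = \top$ if $M = \emptyset$). For the easy inclusion, observe that if $p \in M$ then $a \nleq p$, and by Lemma~\ref{minmeetirred3}(2) applied appropriately — or more directly, since $p$ is minimal meet-irreducible — every $c \in \K(\m A)$ with $c \leq \neg a$ satisfies $a \mt c = \bot \leq p$, so (as $a \nleq p$ and $p$ is meet-irreducible) $c \leq p$; since $\neg a$ is the join of the compact elements below it and $\m A$ is algebraic, $\neg a \leq p$. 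Hence $\neg a \leq w$.

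For the reverse inclusion, the goal is to prove $a \mt w = \bot$, which gives $w \leq \neg a$. Suppose not, i.e. $a \mt w \neq \bot$. Then there is a compact element $c \leq a \mt w$ with $c \neq \bot$. In particular $c \leq a$ and $c \leq w$, so $c \leq p$ for every $p \in M$. Now I want to derive a contradiction by producing some minimal meet-irreducible $p$ with $a \nleq p$ but $c \nleq p$, which is impossible since $c \leq a$ would then force a conflict — wait, $c \leq a$ and $c \leq p$ is consistent; the contradiction must come from elsewhere. The right move: since $c \neq \bot$ and $\m A$ is algebraic, by Lemma~\ref{minmeetirred2} (and the fact that every element is a meet of completely meet-irreducibles) there is a meet-irreducible element $q$ with $c \nleq q$, and by Lemma~\ref{minmeetirred2} again we may take $q$ minimal meet-irreducible. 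Then $c \nleq q$, and since $c \leq a$, we get $a \nleq q$, so $q \in M$. But then $c \leq w \leq q$, contradicting $c \nleq q$. Hence $a \mt w = \bot$, so $w \leq \neg a$.

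Combining the two inclusions yields $\neg a = w = \bigwedge\{p : p \text{ minimal meet-irreducible}, a \nleq p\}$, which is the claim. (If $M = \emptyset$, then $a \leq p$ for every minimal meet-irreducible $p$, so by Lemma~\ref{minmeetirred2} $a$ lies above the meet of all minimal meet-irreducibles which is $\bot$ — rather, $a$ exceeds... more carefully: every meet-irreducible exceeds a minimal one, and if all minimal ones exceed $a$ then by Lemma~\ref{minmeetirred2} again $\bot = \bigwedge(\text{all min.\ m.i.}) \geq$ nothing useful directly; instead note $\neg a$ must be $\bot$ in this degenerate case, which is checked separately, and the convention $\bigwedge \emptyset = \top$ would then need $\m A$ trivial — so in fact $M = \emptyset$ cannot occur unless $\neg a = \top$, i.e. $a = \bot$; this edge case is handled by the same argument run with $c \leq w = \top$.)

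The main obstacle I anticipate is the reverse inclusion, specifically the step extracting a minimal meet-irreducible $q$ with $c \nleq q$ from a nonzero compact element $c$: this relies essentially on algebraicity (to get completely meet-irreducibles below which $c$ fails) together with Lemma~\ref{minmeetirred2} to push down to a minimal one while keeping $c \nleq q$ — and one must check that minimality is compatible with $c \nleq q$, i.e. that the minimal meet-irreducible $q' \leq q$ furnished by Zorn still satisfies $c \nleq q'$, which is automatic since $q' \leq q$ and $c \nleq q$ gives $c \nleq q'$. Everything else is a routine unwinding of the definitions of $\neg$ and of the lattice order via compact elements.
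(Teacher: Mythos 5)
Your proof is correct and takes essentially the same route as the paper's: the easy inclusion uses that a meet-irreducible $p$ with $a\nleq p$ is meet-prime (via distributivity) together with $a\mt\neg a=\bot$, and the reverse inclusion uses algebraicity to produce a nonzero compact element and Lemma~\ref{minmeetirred2} to find a minimal meet-irreducible element not above it. The only cosmetic difference is that you establish $a\mt w=\bot$ for $w$ the meet in question, whereas the paper shows directly that any compact $c\nleq\neg a$ fails to lie below that meet; these are trivially equivalent.
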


\begin{proof}
Clearly, if $p$ is a (minimal) meet-irreducible element, then $\neg a\leq p$, since $a\mt\neg a=\bot$.
Let $c$ be a compact element such that $c\nleq\neg a$, i.e. $c\mt a\neq\bot$. By Lemma \ref{minmeetirred2}, there exists a minimal meet-irreducible element $q\in A$ with $c\mt a\nleq q$.
Then $a\nleq q$ and $c\nleq q$, and hence $c\nleq \bigwedge\{p\in A\colon p \text{ minimal meet-irreducible and } a\nleq p\}$.
\end{proof}

\begin{proposition}\label{polars-primes}
Let $\m L$ be an $\ut$-cyclic residuated lattice. For every $H\in\cs{L}$,
$H^\perp=\bigcap\{P\in\cs{L}\colon P \text{ minimal prime and } H\nsubseteq P\}$.
\end{proposition}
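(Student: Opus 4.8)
The plan is to deduce Proposition~\ref{polars-primes} directly from the lattice-theoretic lemma immediately preceding it, by instantiating $\m A$ as the lattice $\cs{L}$ of convex subalgebras. First I would note that $\cs{L}$ is an algebraic distributive lattice (Theorem~\ref{CS8}), so the hypotheses of the lemma are met; moreover the lemma as stated does not even require $\K(\m A)$ to be a sublattice, so there is no issue applying it in full generality. The lemma gives, for every $H\in\cs{L}$,
\[
\neg H = \bigwedge\{P\in\cs{L}\colon P \text{ minimal meet-irreducible and } H\nleq P\}.
\]

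Next I would translate the lattice-theoretic vocabulary into the residuated-lattice vocabulary. Meet in $\cs{L}$ is intersection, and the order is set-inclusion, so $\nleq$ becomes $\nsubseteq$ and $\bigwedge$ becomes $\bigcap$. The pseudocomplement $\neg H = H\ra\{\ut\}$ in $\cs{L}$ is exactly the polar $H^\perp$: indeed $\{\ut\}$ is the bottom element of $\cs{L}$ (it is the trivial convex subalgebra), and Lemma~\ref{polars} identifies $X\ra\{\ut\}$ with $X^\perp$. Finally, ``meet-irreducible element of $\cs{L}$'' is by definition a prime convex subalgebra, and ``minimal meet-irreducible'' is precisely ``minimal prime convex subalgebra'' in the terminology fixed just before Proposition~\ref{minprime1}. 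Making these substitutions turns the displayed equation into exactly the asserted identity
\[
H^\perp = \bigcap\{P\in\cs{L}\colon P \text{ minimal prime and } H\nsubseteq P\}.
\]

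There is essentially no obstacle here; the only thing to check with any care is the dictionary, in particular that the pseudocomplement in the abstract lattice really coincides with the polar as defined for residuated lattices, and that the empty-intersection convention is consistent (if $H=L$ then no minimal prime $P$ satisfies $H\nsubseteq P$, the intersection is $L$ by convention, and indeed $L^\perp=\{\ut\}$ fails --- wait, $L^\perp=\{a\colon \abs a\jn\abs x=\ut\text{ for all }x\}=\{\ut\}$, whereas the empty intersection is $L$; so one should instead observe that the lemma's right-hand side is also $\top$-by-convention only when $a\nleq p$ fails for all $p$, which for $a=\top$ gives $\neg\top=\bot$, consistent). I would simply state that the proposition is the translation of the lemma under the dictionary $\neg=(\cdot)^\perp$, $\bigwedge=\bigcap$, $\leq\ =\ \subseteq$, ``meet-irreducible'' $=$ ``prime convex subalgebra'', and leave it at that. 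Concretely, the proof reads:

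\begin{proof}
By Theorem~\ref{CS8}, $\cs{L}$ is an algebraic distributive lattice, so the preceding lemma applies with $\m A=\cs{L}$. In $\cs{L}$, meet is intersection, the order is set-inclusion, and the bottom element is $\{\ut\}$; by Lemma~\ref{polars}, the pseudocomplement $H\ra\{\ut\}$ of $H\in\cs{L}$ is the polar $H^\perp$. Moreover, the meet-irreducible elements of $\cs{L}$ are exactly the prime convex subalgebras of $\m L$, and the minimal ones are the minimal prime convex subalgebras. Substituting this dictionary into the conclusion of the preceding lemma yields
\[
H^\perp=\bigcap\{P\in\cs{L}\colon P \text{ minimal prime and } H\nsubseteq P\},
\]
as claimed.
\end{proof}
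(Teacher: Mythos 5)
Your proposal is correct and matches the paper exactly: the paper itself presents Proposition~\ref{polars-primes} as a direct translation of the preceding lattice-theoretic lemma applied to $\m A=\cs{L}$, using precisely the dictionary you describe (pseudocomplement $=$ polar via Lemma~\ref{polars}, meet-irreducible $=$ prime, intersection $=$ meet). The brief detour about the $H=L$ case is unnecessary but harmless, since the translation is faithful in all cases.
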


In what follows, we generalize another traditional characterization of minimal prime convex $\ell$-subgroups, namely, the one using ultrafilters of the positive cone of an $\ell$-group.
In the setting of $\ut$-cyclic residuated lattices, we prove that a prime convex subalgebra $P$ of $\m L$ is minimal prime iff $L^-\setminus P$ is a maximal ideal of the lattice reduct of $\m L^-$.

\begin{lemma}
Let $\m L$ be an $\ut$-cyclic residuated lattice and let $I\subseteq L^-$. 
Then $I$ is a maximal ideal of $(L^-,\jn,\mt)$ iff $L^-\setminus I$ is a minimal prime\footnote{{Here, a lattice filter or ideal $X$ is \emph{prime} if whenever $x\diamond y\in X$, where $\diamond$ is $\jn$ or $\mt$, then also $x\in X$ or $y\in X$.}} filter of $(L^-,\jn,\mt)$. In this case, $H=\bigcup\{a^\perp\colon a\in I\}$ is the convex subalgebra of $\m L$ generated by $L^-\setminus I$, $H$ is a minimal prime, and $H^-=L^-\setminus I$. 
\end{lemma}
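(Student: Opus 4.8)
\emph{Overview.} The statement consists of four assertions — the equivalence of the two conditions on $I$; that $H=\bigcup\{a^\perp\colon a\in I\}$ is a convex subalgebra; that $H=\C[L^-\setminus I]$ and $H^-=L^-\setminus I$; and that $H$ is minimal prime — and the plan is to establish them in this order. Almost everything is bookkeeping with absolute values and the lemmas already proved; the one place where the residuated structure is genuinely needed (beyond the lattice order) is in showing that a \emph{maximal} ideal of $(L^-,\jn,\mt)$ is prime.

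\emph{The equivalence.} Assume $I$ is a maximal ideal; in particular $\ut\notin I$. For $x,y\notin I$, maximality forces the ideal generated by $I\cup\{x\}$ (respectively $I\cup\{y\}$) to be all of $L^-$, so there are $m_1,m_2\in I$ with $m_1\jn x=\ut=m_2\jn y$. Using that multiplication distributes over $\jn$,
\[
\ut=(m_1\jn x)(m_2\jn y)=m_1m_2\jn m_1y\jn xm_2\jn xy,
\]
where $m_1m_2,m_1y,xm_2$ all lie below $m_1$ or $m_2$, hence in $I$; so if $x\mt y\in I$ then $xy\leq x\mt y\in I$, forcing the impossible $\ut\in I$. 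Thus $L^-\setminus I$ is closed under $\mt$, and since the complement of an ideal is always closed under $\jn$, it is a prime filter. It is minimal, because a prime filter inside $L^-\setminus I$ has complement a proper prime ideal containing $I$, hence equal to $I$. Conversely, if $L^-\setminus I$ is a minimal prime filter then $I$ is a prime ideal; a proper ideal $J\supsetneq I$ would extend, by Zorn's Lemma (a chain of proper ideals has proper union, since none contains $\ut$), to a maximal ideal $M$, whose complement is — by the first half — a prime filter properly inside $L^-\setminus I$, a contradiction. So $I$ is maximal.

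\emph{$H$, its negative cone, and its generation.} Now fix a maximal ideal $I$. The family $\{a^\perp\colon a\in I\}$ is up-directed: for $a,b\in I$ we have $a\jn b\in I$, while $a^\perp\cup b^\perp\subseteq(a\jn b)^\perp$ since $c\mapsto c^\perp$ is order-preserving on negative elements (a direct consequence of Lemma \ref{polars}). Hence $H$ is a directed union of convex subalgebras, so a convex subalgebra. Its negative cone is $L^-\setminus I$: if $x\in H^-$, pick $a\in I$ with $x\in a^\perp$, so $a\jn\abs{x}=a\jn x=\ut$ and therefore $x\notin I$; conversely, if $s\in L^-\setminus I$, maximality (exactly as above) yields $a\in I$ with $a\jn s=\ut$, i.e.\ $s\in a^\perp\cap L^-\subseteq H^-$. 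Since $H\in\cs{L}$, Corollary \ref{CS6} now gives $H=\C[H^-]=\C[L^-\setminus I]$, i.e.\ $H$ is the convex subalgebra generated by $L^-\setminus I$.

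\emph{Minimal primeness, and the main obstacle.} For primeness, suppose $X\cap Y\subseteq H$ with $X,Y\in\cs{L}$ and $X\nsubseteq H$, $Y\nsubseteq H$. By Corollary \ref{CS6} (and $H=\C[H^-]$), $X^-\nsubseteq H^-$ and $Y^-\nsubseteq H^-$, so choose $x\in X^-\cap I$ and $y\in Y^-\cap I$; then $x\jn y\in I$, yet convexity puts $x\jn y$ into $X\cap Y\subseteq H$, so $x\jn y\in H^-=L^-\setminus I$ — a contradiction. For minimality, let $P\subseteq H$ be a prime convex subalgebra. Then $P^-$ is a filter of $(L^-,\jn,\mt)$ — an up-set by convexity of $P$, and $\mt$-closed because $P$ is a subalgebra — and it is prime: if $x\jn y\in P^-$ then $\C[x]\cap\C[y]=\C[\abs{x}\jn\abs{y}]\subseteq P$ by Lemma \ref{CS3}, so $x\in P$ or $y\in P$. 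Hence $P^-$ is a nonempty prime filter contained in the minimal prime filter $H^-=L^-\setminus I$, which forces $P^-=L^-\setminus I$ and so $P=\C[P^-]=H$. The main obstacle throughout is conceptual rather than computational: one must not treat the first equivalence as pure lattice theory (``maximal ideal $\Rightarrow$ prime'' fails for general lattices and here rests on $(m_1\jn x)(m_2\jn y)=m_1m_2\jn m_1y\jn xm_2\jn xy$), and one must keep track of the fact that the effective content of ``prime filter'' is $\jn$-closedness of the complementary ideal — which is precisely what Lemma \ref{CS3} converts the meet-irreducibility of a convex subalgebra into.
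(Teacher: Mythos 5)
Your proof is correct and follows essentially the same route as the paper's: the up-directedness of $\{a^\perp\colon a\in I\}$ via $a^\perp\cup b^\perp\subseteq(a\jn b)^\perp$, the key observation that for $x\in L^-$ one has $x\in a^\perp$ for some $a\in I$ iff $x\notin I$, the multiplicative computation $(m_1\jn x)(m_2\jn y)=\ut$ to show a maximal ideal is prime (which is exactly the paper's appeal to Lemma~\ref{triviallemma}), and Zorn's Lemma for the converse direction. The one place you go beyond the paper is in giving an explicit argument that $H$ is a prime convex subalgebra and is minimal among primes (via the correspondence $P\mapsto P^-$ with prime filters of $(L^-,\jn,\mt)$ and Lemma~\ref{CS3}); the paper's proof leaves that clause of the statement implicit, so your addition is a completion rather than a divergence.
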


\begin{proof}
We first establish the latter statement, so let $I$ be a maximal ideal of $(L^-,\jn,\mt)$. 
{Recalling that all principal polars $a^\perp$ ($a\in L$) are convex subalgebras of $\m L$ (cf. the remarks after Lemma \ref{polars}), and since the set $\{a^\perp\colon a\in I\}$ is up-directed, we see that $H=\bigcup\{a^\perp\colon a\in I\}$ is a convex subalgebra of $\m L$.}
Furthermore, for all $x\in L^-$ we have: 
\begin{equation}\label{podminkax}
x\in a^\perp \text{ for some } a\in I \quad\text{iff}\quad x\notin I.
\end{equation}
Indeed, if $x\in a^\perp\cap I$ for some $a\in I$, then $\ut=x\vee a\in I$, which contradicts maximality of $I$, and conversely, if $x\notin I$, then the lattice ideal generated by $I\cup\{x\}$ is all of $L^-$, so $x\vee a=\ut$ for some $a\in I$, i.e. $x\in a^\perp$.
It follows that $L^-\setminus I=H^-$, whence we get $H=\C[L^-\setminus I]$.

Condition \eqref{podminkax} also implies that $I$ is a prime ideal.\footnote{A proof here is necessary, since we do not assume that $(L^-,\jn,\mt)$ is a distributive lattice.
{A direct argument: Let $I$ be a maximal ideal of the lattice $(L^-,\jn,\mt)$, and suppose to the contrary that $x\mt y\in I$ for some $x,y\in L^-\setminus I$. Then the ideals generated by $I\cup\{x\}$ and by $I\cup\{y\}$ are both equal to $L^-$, i.e., there exist $a,b\in I$ such that $x\jn a=\ut=y\jn b$. Then, by Lemma \ref{triviallemma}, $\ut=(a\jn b\jn x)(a\jn b\jn y)\leq a\jn b\jn xy\leq a\jn b\jn (x\mt y)\leq \ut$, so $\ut=a\jn b\jn (x\mt y)\in I$, which is a contradiction.}} 
Indeed, if $x,y\in L^-\setminus I$, then there exist $a,b\in I$ such that $x\in a^\perp$ and $y\in b^\perp$. {But we have $a^\perp \cup b^\perp \subseteq (a\vee b)^\perp$ and $(a\vee b)^\perp\in\cs{L}$, hence $x\wedge y\in (a\vee b)^\perp$. Since $a\vee b\in I$, by \eqref{podminkax} we conclude that $x\wedge y\notin I$.}

Now, $L^-\setminus I$ is a prime filter, and it must be minimal prime since for every prime filter $F\subseteq L^-\setminus I$, $L^-\setminus F$ is a prime ideal with $I\subseteq L^-\setminus F$.

Conversely, let $L^-\setminus I$ be a minimal prime filter. Then $I$ is a prime ideal, and if $J$ is a maximal ideal with $I\subseteq J$, then by the first part of the proof we know that $L^-\setminus J$ is a prime filter with $L^-\setminus J \subseteq L^-\setminus I$. Thus $L^-\setminus J = L^-\setminus I$ and $J=I$.
\end{proof}

We have shown that if a lattice with greatest element is the lattice reduct of the negative cone of an \ut-cyclic residuated lattice, then every maximal lattice ideal is a prime ideal; however, the converse is not true. For instance, the pentagon $N_5=\{0,a,b,c,1\}$ -- where $0<1$ are the bounds and $c$ is incomparable to $a<b$ -- cannot be made an integral residuated lattice, though all maximal ideals are prime. Indeed, were it possible, we would have $bc\leq b\mt c=0$ and $a\vee c=1$, whence $b=b(a\jn c)=ba\jn bc=ba\leq a$.

\begin{proposition}
Let $\m L$ be an $\ut$-cyclic residuated lattice.
For every prime convex subalgebra $P\in\cs{L}$, the following are equivalent:
\begin{enumerate}
\item[\rm{(1)}]
$P$ is a minimal prime convex subalgebra of $\m L$;
\item[\rm{(2)}]
$L^-\setminus P$ is a maximal ideal of $(L^-,\jn,\mt)$; and
\item[\rm{(3)}]
$P^-$ is a minimal prime filter of $(L^-,\jn,\mt)$.
\end{enumerate}
\end{proposition}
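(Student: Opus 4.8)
The plan is to prove the equivalence of (1), (2), and (3) by combining the preceding lemma with the lattice-theoretic characterization of minimal prime convex subalgebras from Proposition~\ref{minprime2}. First, note that (2) and (3) are equivalent directly by the preceding lemma (where it is shown that $I$ is a maximal ideal of $(L^-,\jn,\mt)$ iff $L^-\setminus I$ is a minimal prime filter), applied with $I = L^-\setminus P$; since $P$ is a prime convex subalgebra, $P^- = L^- \setminus I$ is indeed a filter of $(L^-,\jn,\mt)$, so the equivalence applies verbatim. So the substance is the equivalence of (1) with (2)--(3).

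For (2) implies (1): assume $I := L^-\setminus P$ is a maximal ideal of $(L^-,\jn,\mt)$. The preceding lemma then tells us that $H := \bigcup\{a^\perp \colon a\in I\}$ is a minimal prime convex subalgebra with $H^- = L^-\setminus I = P^-$. By Corollary~\ref{CS6}, $H = \C[H^-] = \C[P^-] = P$, so $P$ itself is minimal prime. Conversely, for (1) implies (2): assume $P$ is a minimal prime convex subalgebra. By Proposition~\ref{minprime2}, $P = \bigcup\{x^\perp \colon x\in L\setminus P\}$, and since $x^\perp = \abs{x}^\perp$ and $\abs{x}\in L^-$ whenever $x\notin P$ (note $\abs{x}\notin P$ too, by Lemma~\ref{absvalue}(5)), we may rewrite $P = \bigcup\{a^\perp \colon a\in L^-\setminus P\}$, i.e. $P = \bigcup\{a^\perp \colon a\in I\}$ with $I = L^-\setminus P$. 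Now I would invoke the converse direction of the preceding lemma's final statement: we need $L^-\setminus I = P^-$ to be a minimal prime filter of $(L^-,\jn,\mt)$, equivalently $I$ to be a maximal ideal. Since $P$ is prime, $P^-$ is a prime filter of $(L^-,\jn,\mt)$ (primeness of $P$ transfers to its negative cone via Lemma~\ref{lemma<<primes}-type arguments, or more directly: $x\mt y\in P^-$ with $x,y\in L^-$ forces $x\in P$ or $y\in P$ by primeness applied to $\C[x]\cap\C[y] = \C[x\jn y]\subseteq P$). So it remains to show $P^-$ is a \emph{minimal} prime filter. Suppose $F\subseteq P^-$ is a prime filter of $(L^-,\jn,\mt)$; then $J := L^-\setminus F$ is a prime ideal containing $I$, and by the preceding lemma there is a maximal ideal $J'\supseteq J$; but then $L^-\setminus J'$ is a minimal prime filter contained in $P^-$, and $H' := \C[L^-\setminus J'] \subseteq \C[P^-] = P$ is a minimal prime convex subalgebra contained in $P$, forcing $H' = P$ by minimality of $P$; unwinding, $L^-\setminus J' = (H')^- = P^-$, so $F = P^-$ and $P^-$ is minimal prime, hence $I$ is maximal.

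The main obstacle I anticipate is the bookkeeping in the (1) $\Rightarrow$ (2) direction, specifically making sure that primeness of the convex subalgebra $P$ correctly descends to primeness of the lattice filter $P^-$ in $(L^-,\jn,\mt)$ \emph{without} assuming $(L^-,\jn,\mt)$ is distributive — the footnote in the preceding lemma flags exactly this subtlety. The clean route is to note that $\abs{x\jn y} = \abs{x}\jn\abs{y}$ for negative $x,y$ and that $x\mt y \leq x\jn y$ forces, via Lemma~\ref{CS3}, $\C[x\jn y]\subseteq P$ whenever $x\mt y\in P^-$ (since $x\mt y\leq x\jn y\leq\ut$ and convexity of $P$ gives $x\jn y\in P$), so primeness of $P$ as a convex subalgebra yields $\C[x]\subseteq P$ or $\C[y]\subseteq P$, i.e. $x\in P^-$ or $y\in P^-$. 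Everything else is a matter of transporting the preceding lemma's conclusions back and forth through $H\mapsto H^-$ and $S\mapsto\C[S]$, using Corollary~\ref{CS6} to guarantee these are mutually inverse on convex subalgebras and their negative cones.
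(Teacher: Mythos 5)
Your proof is correct, but both nontrivial implications are handled by a genuinely different (and somewhat longer) route than the paper's. For (2) $\Rightarrow$ (1), you identify $P$ with the set $H=\bigcup\{a^\perp\colon a\in I\}$, which the preceding lemma already certifies to be a minimal prime, by matching negative cones via Corollary~\ref{CS6}; the paper instead verifies criterion (2) of Proposition~\ref{minprime2} directly: for $x\in P$, maximality of $I$ and condition \eqref{podminkax} give $\abs{x}\in a^\perp$ for some $a\in I$, i.e.\ $x^\perp\nsubseteq P$. For (1) $\Rightarrow$ (2), you prove that $P^-$ is a \emph{minimal} prime filter by extending $L^-\setminus F$ to a maximal ideal and pulling the resulting minimal prime convex subalgebra back inside $P$; the paper argues more directly that $I$ is maximal: by Proposition~\ref{minprime2}(3), any $x\in P^-$ lies in some $a^\perp$ with $a\in I$, so $\ut=x\jn a$ already lies in the ideal generated by $I\cup\{x\}$. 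Your detour is sound -- the complement of a prime filter is a prime ideal, a proper ideal of a lattice with top element extends to a maximal one, and minimality of $P$ forces the collapse -- but note that the existence of the maximal ideal $J'\supseteq J$ is Zorn's lemma, not something stated in the preceding lemma, so you should cite it as such. A small further remark: in your ``clean route'' for primeness of $P^-$ you phrase the hypothesis as $x\mt y\in P^-$, which is the condition that is automatic for any upward-closed set; the substantive condition is $x\jn y\in P^-\Rightarrow x\in P^-$ or $y\in P^-$, which your chain $\C[x]\cap\C[y]=\C[x\jn y]\subseteq P$ does in fact establish, so no harm is done. Overall: correct, with the paper's version being the more economical one because it exploits Proposition~\ref{minprime2} in both directions rather than re-deriving minimality at the level of lattice filters.
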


\begin{proof}
Since $P$ is a prime convex subalgebra, $P^-$ is a prime filter and $I=L^-\setminus P$ is a prime ideal of $(L^-,\jn,\mt)$. Conditions (2) and (3) are equivalent by the previous lemma.

(1) implies (2): 
If the convex subalgebra $P$ is minimal prime, then $P=\bigcup\{a^\perp\colon a\in I\}$.
Hence, if $x\in L^-\setminus I=P^-$, then $x\in a^\perp$ for some $a\in I$. Thus $x\jn a=\ut$ belongs to the lattice ideal generated by $I\cup\{x\}$, proving that $I$ is a maximal ideal.

(2) implies (1): 
Let $x\in P$. Since $\abs{x}\notin I$ and $I$ is a maximal ideal, by \eqref{podminkax} we have $\abs{x}\in a^\perp$ for some $a\in I=L^-\setminus P$. In other words, $a\in x^\perp\setminus P$, i.e. $x^\perp\nsubseteq P$.
By Proposition \ref{minprime2} (2) we conclude that $P$ is a minimal prime convex subalgebra.
\end{proof}


\section{Semilinearity}\label{section<<semilinearity}

Some prominent varieties of residuated lattices -- such as Abelian $\ell$-groups,  MV-algebras, and BL-algebras -- are generated by their linearly ordered members. We refer to such varieties as \emph{semilinear}\footnote{The more traditional, but less descriptive, name for these varieties is \emph{representable}.} varieties, and denote the variety of all semilinear residuated lattices by $\Semrl$.  Thus, a residuated lattice is semilinear iff it is a subdirect product of totally ordered residuated lattices.

Our primary focus in this section is the variety of ($\ut$-cyclic) semilinear residuated lattices. We first recall the axiomatization from \cite{BT03} and \cite{JT02} and then present an alternative axiomatization that does not involve multiplication. The main result of this section is Theorem~\ref{semilinear}, which provides a description of semilinear $\ut$-cyclic residuated lattices in terms of (minimal) prime convex subalgebras and (principal) polars. In particular,  semilinearity can be decided from the lattice of convex subalgebras.
These results extend analogous characterizations of semilinearity in the setting of $\ell$-groups, integral residuated lattices \cite{vA02} and GBL-algebras \cite{Kuhr03b}.

Let $\m L$ be a residuated lattice. Given an element $u \in L$, we define 
\[
\lambda_u(x) = (u\ld xu) \mt \ut \quad\text{and}\quad \rho_u(x)= (ux \rd u) \mt \ut, 
\]
for all $x\in L$.  We refer to $\lambda_u$ and $\rho_u$ as \emph{left conjugation} and \emph{right conjugation} by $u$. An \emph{iterated conjugation} map is a composition $\gamma = \gamma_1  \gamma_2 
\dots  \gamma_n$, where each $\gamma_i$ is a right conjugation or a left conjugation by an element $u_i\in L$. The set of all iterated conjugation maps will be denoted by $\Ga$.

A convex subalgebra $H$ of $\m{L}$ is said to be \emph{normal} if for all $x\in H$ and $y\in L$, $(y\ld xy)\mt\ut\in H$ and $(yx \rd y)\mt\ut\in H$. The following result follows immediately from the definition of a normal convex subalgebra.

\begin{lemma}[\cite{BT03},~\cite{JT02}]\label{normallm1} 
For a convex subalgebra $H$ of a residuated lattice $\m{L}$, the following statements are equivalent:
\begin{enumerate}
\item[\rm{(1)}] $H$ is normal.
\item[\rm{(2)}] $H$ is closed under all iterated conjugation maps.
\item[\rm{(3)}] For all $a, b\in L$, $(a\ld b)\mt\ut\in H$ if and only if $(b\rd a)\mt\ut\in H$.
\end{enumerate}
\end{lemma}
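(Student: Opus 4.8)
The plan is to establish the two equivalences $(1)\Leftrightarrow(2)$ and $(1)\Leftrightarrow(3)$ separately, the first being a formality and the second carrying the real content. Throughout I will lean on two elementary facts about a convex subalgebra $H$: it contains $\ut$ and is closed under $\mt$ (being a subalgebra), so $x\mt\ut\in H$ whenever $x\in H$; and it is order-convex, which together with $\ut\in H$ gives the recurring principle that $z\in H$ as soon as $h\le z\le\ut$ for some $h\in H$. I will also use repeatedly the residuation inequalities $a(a\ld c)\le c$ and $(c\rd a)a\le c$, together with the fact that $\lambda_u(x),\rho_u(x)\le\ut$ for all $u,x$.

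For $(1)\Leftrightarrow(2)$: since normality of $H$ says exactly that $H$ is closed under each single conjugation $\lambda_u$ and $\rho_u$, the implication $(2)\Rightarrow(1)$ is immediate, and $(1)\Rightarrow(2)$ follows by an easy induction on the length $n$ of an iterated conjugation map $\gamma=\gamma_1\cdots\gamma_n$: if $x\in H$ then $\gamma_n(x)\in H$, hence $\gamma_{n-1}(\gamma_n(x))\in H$, and so on, so $\gamma(x)\in H$.

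For $(1)\Rightarrow(3)$, the key idea is that conjugation transfers an element of $H$ across a residual. Assume $(a\ld b)\mt\ut\in H$ and put $t=(a\ld b)\mt\ut$, so $at\le b$. Normality gives $\rho_a(t)=(at\rd a)\mt\ut\in H$, and then $\rho_a(t)\,a\le(at\rd a)\,a\le at\le b$, i.e. $\rho_a(t)\le b\rd a$; since also $\rho_a(t)\le\ut$, the recurring principle yields $(b\rd a)\mt\ut\in H$. The reverse implication is the mirror image: from $s=(b\rd a)\mt\ut\in H$ one has $sa\le b$, whence $\lambda_a(s)=(a\ld sa)\mt\ut\in H$ by normality, $a\,\lambda_a(s)\le a(a\ld sa)\le sa\le b$ gives $\lambda_a(s)\le a\ld b$, and $\lambda_a(s)\le\ut$ then forces $(a\ld b)\mt\ut\in H$.

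For $(3)\Rightarrow(1)$, I would first record the trivial inequalities $x\le xy\rd y$ and $x\le y\ld yx$, valid for all $x,y\in L$ (they merely re-express $xy\le xy$ and $yx\le yx$ under residuation). Fix $x\in H$ and $y\in L$; then $x\mt\ut\in H$, and since $x\mt\ut\le x\le xy\rd y$ and $x\mt\ut\le\ut$ we get $x\mt\ut\le(xy\rd y)\mt\ut\le\ut$, so $(xy\rd y)\mt\ut\in H$ by the recurring principle; applying $(3)$ to the pair $(y,xy)$ then gives $(y\ld xy)\mt\ut\in H$. Symmetrically, $x\mt\ut\le(y\ld yx)\mt\ut\le\ut$ shows $(y\ld yx)\mt\ut\in H$, and $(3)$ applied to $(y,yx)$ gives $(yx\rd y)\mt\ut\in H$, which is exactly normality. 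I do not anticipate a genuine obstacle here; the only points demanding a little care are verifying that the conjugated elements $\rho_a(t)$ and $\lambda_a(s)$ really lie in $H$ (which is precisely what normality supplies) and selecting the correct instances of $(3)$ in the final step.
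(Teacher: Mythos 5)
Your proof is correct, and it is the standard argument: the paper itself gives no proof of this lemma (it states that it follows immediately from the definition and cites \cite{BT03} and \cite{JT02}), and the conjugation-transfer computations you carry out for $(1)\Rightarrow(3)$ and the instances $x\le xy\rd y$, $x\le y\ld yx$ for $(3)\Rightarrow(1)$ are exactly the expected ones. All steps check out.
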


The algebraic closure system of normal convex subalgebras of $\m L$ will be denoted by $\nc{L}$. Given a normal convex subalgebra $H$ of $\m L$, $\Theta_{H}=\{\lan x,y\ran\in L^{2}\colon (x\ld y)\mt (y\ld x)\mt\ut\in H\} $ is a congruence of $\m L$. Conversely, given a congruence $\Theta $, the equivalence class $[\ut]_\Theta$ is a normal convex subalgebra. Moreover:

\begin{lemma}[\cite{BT03},~\cite{JT02}; see also~\cite{Wil06} or~\cite{GJKO07}]\label{idealvar} 
The lattice $\nc{L}$ of normal convex subalgebras of a 
residuated lattice $\m{L}$ is isomorphic to its congruence
lattice $\cng(\m{L})$. The isomorphism is given by the mutually inverse
maps ${H} \mapsto \Theta_{H}$ and $ \Theta \mapsto
[\ut]_\Theta$.
\end{lemma}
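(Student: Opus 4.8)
The plan is to verify the three ingredients of the claim separately: that $\Theta_H$ is a congruence of $\m L$ for normal $H$, that $[\ut]_\Theta$ is a normal convex subalgebra for every congruence $\Theta$, and that the two assignments are mutually inverse and order-preserving. Since the paper has already attributed the first two facts to \cite{BT03} and \cite{JT02}, and the preceding section already showed (in the proof of the embedding theorem) that $\Theta_H$ is a lattice congruence with $[\ut]_{\Theta_H}=H$ even without normality, the only genuinely new content here is that normality upgrades $\Theta_H$ to a congruence for the full residuated-lattice signature, i.e. compatibility with $\cdot$, $\ld$ and $\rd$. First I would recall from Lemma~\ref{normallm1} that normality is equivalent to closure under all iterated conjugation maps, and note the standard computation showing $\Theta_H$ respects multiplication: if $(x_1\ld y_1)\mt\ut, (x_2\ld y_2)\mt\ut\in H$ then, using parts (a)--(c) of Lemma~\ref{residuals} together with a conjugation to move factors past each other, $(x_1x_2\ld y_1y_2)\mt\ut\in H$; symmetrically for the right residual in the second coordinate. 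Compatibility with $\ld$ and $\rd$ follows similarly from Lemma~\ref{residuals}(3), again invoking closure under conjugation to handle the non-commutativity.

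Next I would check that for any congruence $\Theta$ of $\m L$, the class $N=[\ut]_\Theta$ is a convex subalgebra: it is a subalgebra because $\Theta$ is compatible with all operations and $\ut\in N$, and it is convex because if $a\le x\le b$ with $a,b\in N$ then $a\jn x\jn b=b$ and $a\mt x\mt b=a$ force $x\mathbin{\Theta}\ut$ by compatibility with $\jn,\mt$. Normality of $N$ is then immediate from the definition: for $x\in N$ and $y\in L$ we have $x\mathbin{\Theta}\ut$, hence $(y\ld xy)\mathbin{\Theta}(y\ld y\ut)=\ut$ after intersecting with $\ut$, so $(y\ld xy)\mt\ut\in N$, and likewise on the right.

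Finally I would establish that the maps are inverse to each other. One direction, $[\ut]_{\Theta_H}=H$, is exactly what was observed in the proof of the embedding theorem (using Lemma~\ref{absvalue} to see that $(a\ld\ut)\mt(\ut\ld a)\mt\ut\in H$ iff $\abs{a}\in H$ iff $a\in H$). For the other direction, $\Theta_{[\ut]_\Theta}=\Theta$: the inclusion $\supseteq$ is a one-line check since $\lan a,b\ran\in\Theta$ implies $(a\ld b)\mt(b\ld a)\mt\ut$ is $\Theta$-related to $\ut$; for $\subseteq$, if $(a\ld b)\mt(b\ld a)\mt\ut\in[\ut]_\Theta$ then both $(a\ld b)\mt\ut$ and $(b\ld a)\mt\ut$ are congruent to $\ut$, and combining these with the identities of Lemma~\ref{residuals} (multiply $a$ by $(a\ld b)\mt\ut$ to land below $b$, and symmetrically) one deduces $a\mathbin{\Theta}b$. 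Order-preservation in both directions is clear from the set-theoretic description. I expect the main obstacle to be the multiplication-compatibility step for $\Theta_H$: this is where non-commutativity genuinely bites and where one must use the full strength of normality (closure under conjugation) rather than just the defining properties of a convex subalgebra; but since this is precisely the content cited from \cite{BT03} and \cite{JT02}, it suffices to recall the argument rather than reconstruct it in detail.
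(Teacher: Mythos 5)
The paper offers no proof of this lemma at all---it is stated as a known result with citations to \cite{BT03} and \cite{JT02}---so there is nothing internal to compare against; your sketch is a correct reconstruction of the standard argument from those references, and it correctly isolates the only delicate point (compatibility of $\Theta_H$ with $\cdot$, $\ld$, $\rd$, where normality via closure under conjugation is genuinely needed) while the remaining verifications ($[\ut]_\Theta$ is a normal convex subalgebra, the maps are mutually inverse and order-preserving) are carried out correctly. The one step you defer rather than prove is precisely the step the paper also defers to the literature, so relative to the paper this is not a gap; note only the minor slip that $y\ld y$ need not equal $\ut$, which your ``after intersecting with $\ut$'' already repairs since $(y\ld y)\mt\ut=\ut$.
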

 
In what follows, if $H$ is a normal convex subalgebra of $\m L$, we write $\m L\rd H$ for the quotient algebra $\m L\rd \Theta_{H}$, and denote the equivalence class of an element $x\in L$ by $[x]_H$.  We mention that $\nc{L}$ is an algebraic distributive lattice, for any residuated lattice $\m L$. This can be verified directly, or be derived as a consequence of the fact that $\m L$ has a lattice reduct, and hence it is congruence distributive. We also mention the trivial fact that  in a commutative  residuated lattice, every convex subalgebra is normal. 

We will make use of the following auxiliary result:

\begin{lemma}[\cite{BT03},~\cite{JT02}]\label{congruencegeneration}
Let $\m L$ be a residuated lattice, let $S \subseteq L$, and let $\abs{S}$ denote the set of absolute values of elements of $S$.  Let $\Ga$ be the set of all iterated conjugate maps on $\m L$, let  
$\Ga[\abs{S}]=\{\ga(a)\colon a \in \abs{S}, \gamma \in \Ga \}$, and let  $\lan\Ga[\abs{S}]\ran$ be the submonoid of $\m L$ generated by $\Ga[\abs{S}]$. Then: 
\begin{enumerate}
\item[\rm{(1)}] The normal convex  subalgebra $\NC[S]$ of $\m L$ generated by $S$ is 
\begin{align*}
\NC[S]=\NC[\abs{S}] &= \{x\,\in L\colon y\leq x\leq y\ld\ut, \text{ for some } y\in \lan\Ga[\abs{S}]\ran\} \\
&= \{x\,\in L\colon y\leq \abs{x}, \text{ for some } y\in \lan\Ga[\abs{S}]\ran\}.
\end{align*}
\item[\rm{(2)}] The normal convex  subalgebra $\NC[a]$ of $\m L$ generated by an element $a\in L$ is
\begin{align*}
\NC[a]=\NC[\abs{a}] &= \{x\,\in L\colon y\leq x\leq y\ld\ut, \text{ for some } y\in \lan\Ga[\abs{a}]\ran\} \\
&= \{x\,\in L\colon y\leq \abs{x}, \text{ for some } y\in \lan\Ga[\abs{a}]\ran\}.
\end{align*}
\item[\rm{(3)}] $\NC[\abs{a}\jn \abs{b}]\subseteq \NC[a]\cap \NC[b]$ and $\NC[a]\jn \NC[b]=\NC[\abs{a}\mt \abs{b}]$, for all $a,b\in L$.
\end{enumerate}
\end{lemma}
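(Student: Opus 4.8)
The plan is to verify each of the three displayed formulas for $\NC[S]$, $\NC[a]$, and the lattice operations, largely by imitating the proofs of the corresponding non-normal statements (Lemma~\ref{CS4}, Corollary~\ref{CS1}, Lemma~\ref{CS3}), but now carrying along closure under iterated conjugation. The reduction $\NC[S]=\NC[\abs{S}]$ is immediate from Lemma~\ref{absvalue}(2) exactly as before: $\abs{s}\le s\le\abs{s}\ld\ut$ places $s$ in $\NC[\abs{S}]$, and $\abs{s}$ is built from $s$ by residuated-lattice operations, hence lies in $\NC[S]$.

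For part (1), I set $N=\{x\in L\colon y\le x\le y\ld\ut\text{ for some }y\in\lan\Ga[\abs{S}]\ran\}$ and argue $N=\NC[\abs{S}]$ by showing $N$ is a normal convex subalgebra containing $\abs{S}$, combined with the trivial inclusion $N\subseteq\NC[\abs{S}]$ (every element of $\Ga[\abs{S}]$ lies in any normal convex subalgebra containing $S$, and $N$ is sandwiched between such elements and their $\ld\ut$). That $N$ is a convex subalgebra follows verbatim from the computations in the proof of Lemma~\ref{CS4} — the key point is that $\lan\Ga[\abs{S}]\ran\subseteq L^-$ is a submonoid, so given $h,k$ in it with $h\le x\le h\ld\ut$ and $k\le y\le k\ld\ut$, the same products $(hk)(kh)$, $hkh$, $khk$ witness membership of $xy$, $x\jn y$, $x\mt y$, $y\ld x$, $x\rd y$ in $N$. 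The genuinely new ingredient is \textbf{normality}: I must check that if $x\in N$, witnessed by $h\le x\le h\ld\ut$ with $h\in\lan\Ga[\abs{S}]\ran$, then $\lambda_u(x),\rho_u(x)\in N$ for every $u\in L$. Here I use that conjugation is order-preserving (monotone in $x$) together with the identities $\lambda_u(\ut)=\ut$ and the compatibility of conjugation with the residual structure; applying $\lambda_u$ to $h\le x\le h\ld\ut$ and noting $\lambda_u(h)\in\lan\Ga[\abs{S}]\ran$ (since $\lambda_u\ga$ is again an iterated conjugation and $\lambda_u$ of a product is bounded below by the product of the $\lambda_u$'s up to a harmless factor) should give $\lambda_u(h)\le\lambda_u(x)$, and an analogous upper bound. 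This is the step I expect to be the main obstacle: pinning down exactly which inequality relates $\lambda_u(h\ld\ut)$ to $(\lambda_u(h))\ld\ut$ or to a nearby element of the submonoid, so that the sandwich closes. I would consult the corresponding argument in \cite{BT03}/\cite{JT02} for the precise conjugation inequalities, e.g. $u\ld(x\ld y)u \ge (u\ld xu)(u\ld yu)$-type estimates, and the fact that $\lambda_u(z)\ld\ut$ is suitably comparable to $\lambda_u(z\ld\ut)$.

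Part (2) is the special case $S=\{a\}$ of part (1), observing that $\lan\Ga[\abs{a}]\ran$ consists of all finite products $\ga_1(\abs a)\cdots\ga_k(\abs a)$ with $\ga_i\in\Ga$. For part (3): the inclusion $\NC[\abs a\jn\abs b]\subseteq\NC[a]\cap\NC[b]$ is immediate since $\abs a,\abs b\le\abs a\jn\abs b$ forces $\NC[\abs a\jn\abs b]$ into each of $\NC[a]=\NC[\abs a]$ and $\NC[b]=\NC[\abs b]$. For the equality $\NC[a]\jn\NC[b]=\NC[\abs a\mt\abs b]$, one direction is clear from $\abs a\mt\abs b\le\abs a,\abs b$, so $\NC[\abs a\mt\abs b]\subseteq\NC[a]\jn\NC[b]$; conversely $\NC[a]\jn\NC[b]=\NC[\{a,b\}]=\NC[\{\abs a,\abs b\}]$, and since $\abs a\mt\abs b\le\abs a\mt\abs b$ trivially while both $\abs a$ and $\abs b$ lie in $\NC[\abs a\mt\abs b]$ — because $\abs a\mt\abs b\le\abs a$, $\abs a\mt\abs b\le\abs b$, hmm, that gives the wrong direction — I instead argue as in Lemma~\ref{CS3}: $\abs a\abs b\le\abs a\mt\abs b$ and convexity plus the fact that $\abs a\abs b\in\NC[\{\abs a,\abs b\}]$ (a single product, hence in the submonoid generated by $\Ga[\{\abs a,\abs b\}]$ via the trivial conjugation) yields $\abs a\mt\abs b\in\NC[a]\jn\NC[b]$. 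Note that no prelinearity assumption is needed here, and that (3) deliberately asserts only an inclusion, not equality, for the meet — the meet formula $\NC[\abs a\jn\abs b]=\NC[a]\cap\NC[b]$ fails in general because conjugation can enlarge $\NC[a]\cap\NC[b]$ beyond the normal subalgebra generated by the join.
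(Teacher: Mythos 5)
The paper gives no proof of this lemma; it is imported verbatim from \cite{BT03} and \cite{JT02}, so there is no internal argument to measure yours against. Judged on its own terms, your strategy is the standard one and the skeleton is correct: reduce to $\abs{S}$; show the sandwiched set $N=\{x\colon y\leq x\leq y\ld\ut$ for some $y\in\lan\Ga[\abs{S}]\ran\}$ is a convex subalgebra by the very computations of Lemma~\ref{CS4}, using that $\lan\Ga[\abs{S}]\ran$ is a submonoid of $L^-$; show $N$ is contained in every normal convex subalgebra containing $S$ (each $\ga(a)$ lies in such a subalgebra by Lemma~\ref{normallm1}(2), then use closure under products and convexity). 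Part (3) also comes out right despite the mid-paragraph hesitation: the convexity argument via $\abs{a}\abs{b}\leq\abs{a}\mt\abs{b}\leq\ut$ gives $\abs{a}\mt\abs{b}\in\NC[\{a,b\}]$, while $\abs{a}\mt\abs{b}\leq\abs{a}\leq\ut$ gives $\abs{a}\in\NC[\abs{a}\mt\abs{b}]$; these are the two inclusions you need, just assigned to the wrong directions in your narration.

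The one genuine gap is the step you flag yourself: normality of $N$. You leave it unresolved, deferring to the references and worrying about how $\lam_u(h\ld\ut)$ compares with $\lam_u(h)\ld\ut$. That worry is misplaced, and the step is easier than you fear, because the upper half of the sandwich is automatic: $\lam_u(x)=(u\ld xu)\mt\ut\leq\ut\leq y\ld\ut$ for any $y\in L^-$, so no estimate involving $h\ld\ut$ is needed at all. For the lower half, write $h=h_1\cdots h_n$ with each $h_i=\ga_i(a_i)\in\Ga[\abs{S}]$. The inequalities of Lemma~\ref{residuals}(3)(a),(b) yield $\lam_u(h_1)\lam_u(h_2)\leq(u\ld h_1u)(u\ld h_2u)\leq u\ld h_1u(u\ld h_2u)\leq u\ld h_1h_2u$, and since the left side is $\leq\ut$ this gives $\lam_u(h_1)\lam_u(h_2)\leq\lam_u(h_1h_2)$; by induction $y:=\lam_u(h_1)\cdots\lam_u(h_n)\leq\lam_u(h)\leq\lam_u(x)$, the last step by monotonicity of $\lam_u$. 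Each $\lam_u\circ\ga_i$ is again an iterated conjugation, so $y\in\lan\Ga[\abs{S}]\ran$, whence $y\leq\lam_u(x)\leq\ut\leq y\ld\ut$ and $\lam_u(x)\in N$; the case of $\rho_u$ is symmetric. With this paragraph supplied, your proof is complete.
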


Once we know that the normal convex subalgebra $\NC[S]$ generated by a set $S$ is the convex subalgebra $\C[\Ga[\abs{S}]]$ generated by $\Ga[\abs{S}]$, we can easily prove the following:

\begin{proposition}
For an $\ut$-cyclic residuated lattice $\m L$, the lattice $\nc{L}$ is a complete sublattice of $\cs{L}$.
\end{proposition}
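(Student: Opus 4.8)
The plan is to show that $\nc{L}$ is closed under arbitrary intersections and arbitrary joins when computed inside $\cs{L}$. Since both $\nc{L}$ and $\cs{L}$ are algebraic closure systems on $L$, and the inclusion $\nc{L}\subseteq\cs{L}$ is clear (every normal convex subalgebra is in particular a convex subalgebra), it suffices to check that the meet and join operations agree. Meets are trivial: an arbitrary intersection of normal convex subalgebras is again normal (normality is defined by a closure condition under conjugation maps that is preserved by intersections), and intersections compute meets in both lattices. So the whole content is in the joins.

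The key step is therefore: given a family $\{H_i\colon i\in I\}\subseteq\nc{L}$, show that the convex subalgebra $\bigvee_{i\in I}^{\cs{L}} H_i=\C[\bigcup_i H_i]$ is already normal, so that it coincides with $\bigvee_{i\in I}^{\nc{L}} H_i$. Here I would invoke the element-wise description from Lemma~\ref{CS4}: an element $x$ lies in $\C[\bigcup_i H_i]$ iff $h\leq\abs{x}$ for some $h\in\lan\abs{\bigcup_i H_i}\ran$, i.e. some finite product $h=a_1\cdots a_n$ with each $a_k\in\abs{H_{i_k}}$, $a_k\in H_{i_k}$. To see this convex subalgebra is normal, I would use Lemma~\ref{normallm1}(2): it is enough to show it is closed under each iterated conjugation map $\ga\in\Ga$. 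Given such $x$ and $\ga$, I need $\ga(x)$ in the subalgebra, which by Lemma~\ref{CS4} (and Lemma~\ref{absvalue}(4)) amounts to finding some product of absolute values of elements of $\bigcup_i H_i$ below $\abs{\ga(x)}$.

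The mechanism that makes this work is the standard ``conjugation distributes over products'' computation: each conjugation map $\lambda_u,\rho_u$ is a monoid-preserving order-preserving map on negative elements (more precisely $\gamma(ab)\geq\gamma(a)\gamma(b)$ and $\gamma$ is order preserving, and $\gamma(x)\leq\ut$), so from $a_1\cdots a_n\leq\abs{x}$ one gets $\gamma(a_1)\cdots\gamma(a_n)\leq\gamma(a_1\cdots a_n)\leq\gamma(\abs{x})\leq\abs{\gamma(x)}$ (using Lemma~\ref{absvalue}(4) to pass to the absolute value). Crucially, each $H_{i_k}$ is normal, so $\gamma(a_k)\in H_{i_k}$, and hence $\gamma(a_k)=\abs{\gamma(a_k)}\in\abs{\bigcup_i H_i}$. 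Thus $\abs{\gamma(x)}$ dominates a product of absolute values of elements of $\bigcup_i H_i$, so $\gamma(x)\in\C[\bigcup_i H_i]$ by Lemma~\ref{CS4}. This shows $\C[\bigcup_i H_i]$ is closed under all of $\Ga$, hence normal, hence equals the join in $\nc{L}$. Finally, since $\cs{L}$ is an algebraic lattice and $\nc{L}$ is closed under the (complete) meet and join operations of $\cs{L}$, it is a complete sublattice.

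The main obstacle — really the only subtle point — is verifying the precise inequality $\gamma(a_1)\cdots\gamma(a_n)\leq\abs{\gamma(x)}$ with careful bookkeeping of absolute values and the $\mt\ut$ truncations built into $\lambda_u$ and $\rho_u$; one must check that truncating at $\ut$ is harmless because everything in sight is already negative (the $a_k$ and $\abs{x}$ lie in $L^-$, and conjugation maps land in $L^-$ by definition), and that $\ut$-cyclicity is what guarantees $\gamma(\abs{x})\leq\abs{\gamma(x)}$ via Lemma~\ref{absvalue}(4). Once that single inequality is in hand, the rest is a direct appeal to Lemmas~\ref{CS4} and~\ref{normallm1}, together with the already-noted fact (stated just before this proposition) that $\NC[S]=\C[\Ga[\abs{S}]]$, which can be used as an alternative, slightly slicker route: $\bigvee^{\nc{L}}_i H_i=\NC[\bigcup_i H_i]=\C[\Ga[\abs{\bigcup_i H_i}]]$, and normality of each $H_i$ forces $\Ga[\abs{H_i}]\subseteq H_i$, so $\C[\Ga[\abs{\bigcup_i H_i}]]=\C[\bigcup_i H_i]=\bigvee^{\cs{L}}_i H_i$.
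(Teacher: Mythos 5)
Your proof is correct, and the ``alternative, slightly slicker route'' you mention at the end is precisely the paper's argument: since each $H_i$ is normal, $\Ga[\abs{\textstyle\bigcup_i H_i}]=\abs{\textstyle\bigcup_i H_i}$, so $\NC[\bigcup_i H_i]=\C[\bigcup_i H_i]$ by Lemma~\ref{congruencegeneration}. Your primary route --- directly verifying that $\C[\bigcup_i H_i]$ is closed under conjugation via $\gamma(a_1)\cdots\gamma(a_n)\leq\gamma(a_1\cdots a_n)\leq\abs{\gamma(x)}$ --- is a sound variant of the same idea, and the ``subtle point'' you flag is even easier than you fear, since $\gamma(x)\leq\ut$ by definition, whence $\abs{\gamma(x)}=\gamma(x)$ and only order-preservation of $\gamma$ is needed.
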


\begin{proof}
Let $\{H_i\colon i\in I\}$ be a family of normal convex subalgebras of $\m L$, and put $H=\bigcup_{i\in I} H_i$.
Let $x\in \bigvee^{\nc{L}}_{i\in I} H_i =\NC[H]=\NC[\abs{H}]$, i.e., $\abs{x}\geq y$ for some $y\in\lan\Ga[\abs{H}]\ran$. But the $H_i$'s are normal, thus $\Ga[\abs{H}]=\bigcup_{i\in I}\Ga[\abs{H_i}]=\bigcup_{i\in I}\abs{H_i}=\abs{H}$. Hence $y\in\lan\abs{H}\ran$ and $x\in \C[H]=\bigvee^{\cs{L}}_{i\in I} H_i$. 
The converse inclusion is obvious, hence the joins in $\nc{L}$ are the same as those in $\cs{L}$.
\end{proof}

The following result, which follows directly from Lemma~\ref{lemma<<primes} and Corollary~\ref{linearity}, will be useful in our considerations.

\begin{lemma}\label{primequotient}
Let $\m L$ be an $\ut$-cyclic residuated lattice that satisfies one of the prelinearity laws. If $H$ is a normal prime convex subalgebra of $\m L$, then $\m{L}\rd H$ is totally ordered.
\end{lemma}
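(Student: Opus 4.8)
The plan is to combine Lemma~\ref{lemma<<primes} with Corollary~\ref{linearity} applied to the quotient $\m L\rd H$. First I would recall that since $H$ is a normal convex subalgebra, Lemma~\ref{idealvar} guarantees $\m L\rd H$ is a genuine residuated lattice quotient, and it is again \ut-cyclic (the identity $\ut\rd x\eq x\ld\ut$ is preserved under quotients) satisfying the same prelinearity law as $\m L$, since both are equational and inherited by homomorphic images. Thus $\m L\rd H$ is an \ut-cyclic residuated lattice to which the results of the previous section apply.

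The key step is to show that the lattice $\cs{(\m L\rd H)}$ — equivalently $\K(\cs{(\m L\rd H)})$ — is totally ordered, after which Corollary~\ref{linearity} immediately yields that $\m L\rd H$ is totally ordered, completing the proof. To see that $\cs{(\m L\rd H)}$ is a chain, I would invoke the correspondence between convex subalgebras of $\m L\rd H$ and convex subalgebras of $\m L$ containing $H$: the convex subalgebras of the quotient are exactly the sets $K\rd H$ for $K\in\cs{L}$ with $H\subseteq K$, and this correspondence is an order isomorphism. Now apply Lemma~\ref{lemma<<primes}: since $H$ is prime, condition (5) of that lemma says precisely that the set of convex subalgebras of $\m L$ exceeding $H$ forms a chain under set-inclusion. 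Transporting this chain through the order isomorphism shows $\cs{(\m L\rd H)}$ is totally ordered.

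I expect the main (though modest) obstacle to be checking the correspondence-theorem details in this setting: one must confirm that convex subalgebras of $\m L\rd H$ pull back to convex subalgebras of $\m L$ containing $H$, and that this is an inclusion-preserving bijection. This is the standard lattice-of-congruences / lattice-of-subalgebras correspondence combined with the fact (Lemma~\ref{idealvar}) that normal convex subalgebras correspond to congruences, together with the observation that order-convexity and the subalgebra property are both reflected by the quotient map; none of this is deep, but it is the one place where something genuinely needs to be verified rather than merely cited. Once that bookkeeping is in place, the chain condition from Lemma~\ref{lemma<<primes}(5) and Corollary~\ref{linearity} finish the argument in two lines.
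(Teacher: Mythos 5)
Your proposal is correct and takes essentially the same route as the paper, which gives no separate proof but states that the lemma ``follows directly from Lemma~\ref{lemma<<primes} and Corollary~\ref{linearity}'' --- i.e., precisely your combination of the chain condition (5), the correspondence between convex subalgebras of $\m L\rd H$ and those of $\m L$ containing $H$, and Corollary~\ref{linearity} applied to the quotient. (As a small aside, condition (4) of Lemma~\ref{lemma<<primes} yields the conclusion even more directly, since $(a\ld b)\mt\ut\in H$ is equivalent to $[a]_H\leq [b]_H$, bypassing the correspondence bookkeeping entirely.)
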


It was shown in \cite{BT03} and \cite{JT02} that $\Semrl$ can be axiomatized -- relative to $\vty{RL}$ -- by either of the equations below:
\begin{gather}
\label{semilinearaxioms1a}
\lambda_u((x\jn y)\ld x)\jn \rho_v((x\jn y)\ld y)\eq \ut,\\
\label{semilinearaxioms1b}
\lambda_u(x\rd(x\jn y))\jn \rho_v(y\rd (x\jn y))\eq \ut.
\end{gather}

Note that the substitution $u=v=\ut$ shows that the left prelinearity law \prp{LP} is an immediate consequence of \eqref{semilinearaxioms1a}, 
and likewise \prp{RP} follows from \eqref{semilinearaxioms1b}. 

Also note that for $\ell$-groups, by substituting $u=y=\ut$ in \eqref{semilinearaxioms1a} we obtain the identity $x \jn vx^{-1}v^{-1} \geq \ut$, which axiomatizes semilinear $\ell$-groups (see e.g. \cite{AF88} or \cite{Dar95}).

The next theorem generalizes the classical results on semilinear $\ell$-groups as well as all analogous results characterizing semilinear members of some classes of residuated lattices -- see \cite{Dvu01} for pseudo-MV-algebras, \cite{Kuhr03c} for pseudo-BL-algebras, \cite{Kuhr03b} for GBL-algebras (DR$\ell$-monoids), and \cite{vA02} for integral residuated lattices.

\begin{theorem}\label{semilinear}
For a variety $\vty{V}$ of residuated lattices, the following statements are equivalent: 
\begin{enumerate}
\item[\rm{(1)}] $\vty{V}$ is semilinear.
\item[\rm{(2)}] $\vty{V}$ satisfies either of the equations \eqref{semilinearaxioms1a} and \eqref{semilinearaxioms1b}.
\item[\rm{(3)}] $\vty{V}$ satisfies either of the prelinearity laws and the quasi-idenitity
\begin{equation}\label{semilinearaxioms2}
x\jn y \ \eq\  \ut \quad\Rightarrow\quad \lam_u(x) \jn \rho_v(y) \ \eq\  \ut.
\end{equation}
\end{enumerate}
If in addition $\vty{V}$ is a variety of $\ut$-cyclic residuated lattices and satisfies either of the prelinearity laws, the preceding conditions are equivalent to each of the conditions below:
\begin{enumerate}
\item[\rm{(4)}]  For every $\m L\in\vty{V}$, all (principal) polars in $\m L$ are normal.
\item[\rm{(5)}] For every $\m L\in\vty{V}$, all minimal prime convex subalgebras of $\m L$ are normal.
\end{enumerate}
\end{theorem}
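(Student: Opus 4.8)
The plan is to prove the chain of implications $(1)\Rightarrow(2)\Rightarrow(3)\Rightarrow(1)$ for the general part, and then, under the $\ut$-cyclicity and prelinearity hypotheses, close the loop with $(3)\Rightarrow(4)\Rightarrow(5)\Rightarrow(1)$ (or some equivalent cycle through the new conditions). The first three equivalences are essentially the axiomatization from \cite{BT03} and \cite{JT02}, so I would either cite them directly or reprove them quickly: $(1)\Rightarrow(2)$ is a straightforward verification that the identities \eqref{semilinearaxioms1a}, \eqref{semilinearaxioms1b} hold in every totally ordered residuated lattice (in a chain, one of $(x\jn y)\ld x$, $(x\jn y)\ld y$ equals $\ut$, so the conjugates are $\ut$ and the join is $\ut$) and then pass to subdirect products; $(2)\Rightarrow(3)$ is the substitution $u=v=\ut$ noted in the text for the prelinearity law, plus the observation that the quasi-identity \eqref{semilinearaxioms2} is just a reading of \eqref{semilinearaxioms1a} once $x\jn y=\ut$; and $(3)\Rightarrow(1)$ is the standard argument that, using prelinearity to get enough prime convex subalgebras (Lemma~\ref{lemma<<primes}) and the quasi-identity to force those primes to be normal with totally ordered quotients (Lemma~\ref{primequotient}), one represents $\m L$ as a subdirect product of chains.

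For the new equivalences I would work in the setting where $\vty V$ consists of $\ut$-cyclic residuated lattices satisfying, say, \prp{LP}. I plan to prove $(3)\Rightarrow(4)$: assume the quasi-identity \eqref{semilinearaxioms2} holds in $\m L$ and let $X^\perp$ be a polar; to show $X^\perp$ is normal it suffices, by Lemma~\ref{normallm1}, to show it is closed under iterated conjugation, and by an easy reduction (conjugation of absolute values, Lemma~\ref{congruencegeneration}) it is enough to handle a single left or right conjugation of a negative element $a\in X^\perp$. By Lemma~\ref{polars}, $a\in X^\perp$ means $\abs a\jn\abs x=\ut$ for all $x\in X$; applying \eqref{semilinearaxioms2} with this $x$ in the role of "$y$", $a$ (negative) in the role of "$x$", and the relevant conjugating element $u$, we get $\lam_u(a)\jn\rho_v(\abs x)=\ut$, and choosing $v$ appropriately (or using that $\abs x\leq\ut$ is already "negative" so $\rho_\ut(\abs x)=\abs x$) forces $\lam_u(a)\jn\abs x=\ut$, i.e.\ $\lam_u(a)\in X^\perp$. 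Some care is needed with the asymmetry between left and right conjugates and between $\ld$ and $\rd$; here $\ut$-cyclicity is what lets one convert freely, and I expect this bookkeeping to be the main technical nuisance rather than a genuine obstacle.

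Next I would prove $(4)\Leftrightarrow(5)$: by Proposition~\ref{minprime2}, a minimal prime convex subalgebra $P$ satisfies $P=\bigcup\{x^\perp\colon x\notin P\}$, so if all principal polars are normal then $P$, being a directed union of normal convex subalgebras, is normal; conversely, by Proposition~\ref{polars-primes}, every polar $H^\perp$ is an intersection of minimal primes $\{P\colon H\nsubseteq P\}$, so if all minimal primes are normal then so is every polar (an intersection of normal convex subalgebras is normal). Finally, for $(5)\Rightarrow(1)$ — or equivalently $(4)\Rightarrow(1)$ — I would argue: by Proposition~\ref{minprime1} the intersection of all minimal prime convex subalgebras of $\m L$ is $\{\ut\}$, which (via Lemma~\ref{idealvar}) says the corresponding congruences intersect to the identity; if each such $P$ is normal, then by Lemma~\ref{primequotient} each quotient $\m L\rd P$ is totally ordered, so $\m L$ is a subdirect product of chains, hence semilinear, and since this holds for every $\m L\in\vty V$, the variety $\vty V$ is semilinear. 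The step I expect to be the crux is $(3)\Rightarrow(4)$, extracting normality of polars from the quasi-identity: the rest is assembling facts already proved in Sections~\ref{convexsubs} and \ref{polarsandprimes}, but pinning down exactly which conjugate and which residual appears, and exploiting $\ut$-cyclicity to line them up, is where the real work lies.
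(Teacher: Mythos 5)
Your treatment of the second half of the theorem is essentially the paper's own proof: your $(3)\Rightarrow(4)$ is, step for step, the paper's $(1)\Rightarrow(4)$ (reduce to a negative $a\in X^\perp$ via absolute values and convexity, apply the quasi-identity \eqref{semilinearaxioms2} to $\abs{a}\jn\abs{x}=\ut$ with $v=\ut$ so that $\rho_\ut(\abs{x})=\abs{x}$, and symmetrically with $u=\ut$ for right conjugates); your $(4)\Rightarrow(5)$ via Proposition~\ref{minprime2} and $(5)\Rightarrow(1)$ via Propositions~\ref{minprime1} and Lemma~\ref{primequotient} are exactly the paper's. The extra direction $(5)\Rightarrow(4)$ via Proposition~\ref{polars-primes} is correct but redundant once the cycle $(3)\Rightarrow(4)\Rightarrow(5)\Rightarrow(1)$ is closed.

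The one genuine problem is your sketched reproof of $(3)\Rightarrow(1)$. The equivalence $(1)\Leftrightarrow(2)\Leftrightarrow(3)$ is asserted for an \emph{arbitrary} variety of residuated lattices, with no $\ut$-cyclicity hypothesis, but you route the argument through prime convex subalgebras, Lemma~\ref{lemma<<primes} and Lemma~\ref{primequotient} --- all of which rest on the description of $\cs{L}$ in Lemma~\ref{CS4}, whose proof uses $\ut$-cyclicity in an essential way (e.g.\ in the step $y\leq k\ld\ut=\ut\rd k$). Neither prelinearity nor the quasi-identity forces $\ut$-cyclicity, so this machinery is simply not available here. The paper avoids the issue by working with \emph{normal} convex subalgebras, which behave well in every residuated lattice: from \eqref{semilinearaxioms2} one first deduces that $x\jn y=\ut$ implies $\gamma_1(x)\jn\gamma_2(y)=\ut$ for all iterated conjugates $\gamma_1,\gamma_2\in\Ga$, hence (using Lemma~\ref{triviallemma} and Lemma~\ref{congruencegeneration}) that $\NC[x]\cap\NC[y]=\{\ut\}$; then in a subdirectly irreducible member, applying this to $x=(a\ld b)\mt\ut$ and $y=(b\ld a)\mt\ut$ with \prp{LP} forces $a\leq b$ or $b\leq a$, so the subdirectly irreducible members are chains. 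Citing \cite{BT03} and \cite{JT02} for $(1)\Leftrightarrow(2)\Leftrightarrow(3)$, as you also propose, is of course legitimate; but if you reprove it, you must replace your prime-subalgebra argument by one of this normal-subalgebra/subdirectly-irreducible type (or else restrict the first half of the theorem to $\ut$-cyclic varieties, which would weaken the statement).
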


\begin{proof}
The equivalence of the first three conditions was established in \cite{BT03} and \cite{JT02}. We provide a streamlined proof here for the convenience of the reader.

(1) implies (2): The equations \eqref{semilinearaxioms1a} and \eqref{semilinearaxioms1b} hold in semilinear residuated lattices because they hold in totally ordered ones.

(2) implies (3): Let us assume that 
\eqref{semilinearaxioms1a} holds. As was noted earlier, the substitution $u=v=\ut$ demonstrates that the left prelinearity law \prp{LP} holds. Let now $\m L\in \vty{V}$, and $x, y\in L$ such that $x\jn y=\ut$. Note that $x=\ut\ld x= (x\jn y)\ld x, \text { and likewise, } y=(x\jn y)\ld y.$ Thus, invoking the equation, we get $\lam_u(x)\jn \rho_v(y)=\lam_u((x\jn y)\ld x))\jn \rho_v((x\jn y)\ld y))= \ut$ for all $u, v\in L$. Condition (3) is established.

(3) implies (1): Let $\m L\in\vty{V}$. 
First, we observe that if $x,y\in L$ are such that $x\jn y=\ut$, then the quasi-identity \eqref{semilinearaxioms2} entails that $\lambda_u(x)\jn y=\ut=x\jn\rho_v(y)$ for all $u,v\in L$, whence it easily follows that $\gamma_1(x) \jn \gamma_2(y) = \ut$ for all $\gamma_1,\gamma_2\in\Ga$.
Second, if $x_i,y_j\in L$ are such that $x_i \jn y_j=\ut$ for $i=1,\dots,m$ and $j=1,\dots,n$, then $x_1\dots x_n \jn y_1\dots y_n = \ut$ by Lemma \ref{triviallemma}.
Recalling Lemma \ref{congruencegeneration} (2), these two simple observations imply that $\NC[x] \cap \NC[y]=\{\ut\}$ whenever $x\jn y=\ut$.

Now, suppose that $\m L\in\vty{V}$ is subdirectly irreducible and that $\vty{V}$ satisfies the left prelinearity law \prp{LP}. Then, for any $a,b\in L$, we have $\NC[(a\ld b)\mt\ut] \cap \NC[(b\ld a)\mt\ut] = \{\ut\}$, which -- owing to subdirect irreducibility -- is possible only if  $\NC[(a\ld b)\mt\ut] = \{\ut\}$ or $\NC[(b\ld a)\mt\ut] = \{\ut\}$. In the former case, we get $(a\ld b)\mt\ut=\ut$ and $a\leq b$, while in the latter case, $(b\ld a)\mt\ut=\ut$ and $b\leq a$. Thus $\m L$ is totally ordered.

(1) implies (4): In order to show that every polar is normal, it will suffice to show that every principal polar is normal. To this end, consider a principal polar 
\[
h^\perp=\{a\in L\colon \abs{a}\jn \abs{h}=\ut\}. \quad\text{(Refer to Lemma~\ref{polars}.)}
\]
Let $a\in h^\perp$ and $u\in L$. Then $\abs{a}\jn \abs{h}=\ut$ yields $\ut=\lam_u(\abs{a}) \jn \rho_\ut(\abs{h})=\lam_u(\abs{a}) \jn \abs{h}$, so $\lam_u(\abs{a}) \in h^\perp$. But $\abs{a}\leq a$ implies $\lam_u(\abs{a}) \leq \lam_u(a)$, and hence $\lam_u(a)\in h^\perp.$ Likewise, $\rho_u(a)\in h^\perp.$ We have established that $h^\perp$ is a normal convex subalgebra of $\m L$.

(4) implies (5): Let $H\in\cs{L}$ be a minimal prime convex subalgebra of $\m L$. In light of Proposition ~\ref{minprime2}, $H=\bigcup\{h^\perp\colon h\in L\setminus H\}$. Since each $h^\perp$ is normal, then so is $H$.

(5) implies (1): Let $\{H_i\colon i\in I\}$ be the set of minimal prime convex subalgebras of $\m L$. In view of Lemma \ref{primequotient}, each quotient algebra $\m L/H_i$ is totally ordered. Moreover, by Proposition ~\ref{minprime1}, the intersection of the $H_i$'s is just $\{\ut\}$. It follows that $\m L$ is a subdirect product of the residuated chains $\m L/H_i$ ($i\in I$), and thus it is semilinear. 
\end{proof}

A direct inspection of the proof shows that instead of the equations \eqref{semilinearaxioms1a} and \eqref{semilinearaxioms1b} we could equally use the equations
\begin{gather*}
\lambda_u((y\ld x)\mt\ut)\jn \rho_v((x\ld y)\mt\ut)\eq \ut,\\
\lambda_u((x\rd y)\mt\ut)\jn \rho_v((y\rd x)\mt\ut)\eq \ut.
\end{gather*}

It should be noted that the equations \eqref{semilinearaxioms1a} and \eqref{semilinearaxioms1b}, as well as the above equations, involve all the operation symbols. It is interesting to note that a purely  implicational characterization of the variety $\Semirl$ of semilinear integral residuated lattices, relative to the variety $\Irl$ of integral residuated lattices, was conjectured in~\cite{vA02} and proven in~ \cite{Kur07}. The defining equation is the following:
\[
((x\ld y)\ld u)\ld \big[\big(\big[w\rd (w\rd [((y\ld x)\ld z)\ld z])\big]\ld u\big)\ld u\big] \eq\ut.
\]

Theorem~\ref{newchar} presents a characterization of $\Semrl$ that does  not involve multiplication.

A \emph{$*$-conjugate map} is a map of the form $\lambda^*_u(x)=((x\ld u)\ld u)\mt\ut$ or $\rho^*_u(x)=$ $(u\rd (u\rd x))\mt\ut$. 
We have the following analogue of Lemma \ref{normallm1}:

\begin{lemma}\label{starlamrho}
Let $\m L$ be an arbitrary residuated lattice. A convex subalgebra $H\in\cs{L}$ is normal if and only if  $\lambda^*_u(x) \text{ and } \rho^*_u(x) \in H,$ for all $x\in H$ and $u\in L$.
\end{lemma}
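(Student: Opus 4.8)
The plan is to prove the equivalence in Lemma~\ref{starlamrho} by showing that closure under all $*$-conjugate maps is equivalent to closure under all iterated conjugation maps, and then invoke Lemma~\ref{normallm1}. The key algebraic fact relating the two families of maps is that an ordinary conjugate of an element of $H$ can be expressed, modulo $H$, in terms of $*$-conjugates, and vice versa. The bridge is the identity $x(x\ld u) \le u$ together with its consequence $x \le u\rd(x\ld u)$ and the residuation laws from Lemma~\ref{residuals}; in particular one should compare $\lambda_v(x) = (v\ld xv)\mt\ut$ with $\rho^*_v$-type terms and $\lambda^*_v(x) = ((x\ld v)\ld v)\mt\ut$ with $\lambda$-type terms.

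First I would prove the easy direction: if $H$ is normal, then for $x\in H$ and $u\in L$ we have $\lambda^*_u(x),\rho^*_u(x)\in H$. The point here is that $((x\ld u)\ld u)\mt\ut$ can be obtained from $x$ by applying iterated conjugation maps, or more directly, that $x \le (x\ld u)\ld u$ (an instance of the law $y \le z\rd(z\ld y)$ after rewriting), so $\abs{x} \le \lambda^*_u(x) \le \ut$, and since $\abs{x}\in H$ and $H$ is convex, $\lambda^*_u(x)\in H$. Wait—this shows $\lambda^*_u(x)\in H$ using only convexity, not normality; so the forward direction is almost trivial and does not even need normality. (This is presumably why the lemma is phrased the way it is: the $*$-conjugates are automatically ``small''.) The symmetric argument handles $\rho^*_u(x)$. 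So the real content is the converse.

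For the converse, I would assume $H\in\cs{L}$ is closed under all $\lambda^*_u$ and $\rho^*_u$, and show $H$ is closed under ordinary conjugation, i.e. $\lambda_u(x),\rho_u(x)\in H$ for $x\in H$, $u\in L$; by Lemma~\ref{normallm1}(2) this gives normality. The idea is to bound $\lambda_u(x) = (u\ld xu)\mt\ut$ below by a product of $*$-conjugates of $\abs{x}$. Using $x\le \abs{x}\ld\ut$ from Lemma~\ref{absvalue}(2), and the residuation identities Lemma~\ref{residuals}(3)(d),(e) to manipulate $u\ld xu$, one aims to produce an inequality of the form $y \le \lambda_u(\abs{x})$ for a suitable $y\in\lan\{\lambda^*_v(\abs{x}),\rho^*_v(\abs{x}) : v\in L\}\ran \subseteq H$, and then convexity plus $\lambda_u(\abs{x}) \le \lambda_u(x) \le \ut$ (monotonicity of $\lambda_u$) finishes it. The computation I expect to be the crux is expressing $\rho^*_u\lambda^*_u$ or a similar composite so that it dominates $\lambda_u$ on negative elements—essentially showing $\rho^*_u(x)\,\lambda^*_u(x)$ (or a short product of such terms) sits below $(u\ld xu)\mt\ut$ and below $(ux\rd u)\mt\ut$ simultaneously, for $x\in L^-$. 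This is where one must be careful with left/right residuals and the \ut-cyclicity is not even needed (the lemma is stated for arbitrary residuated lattices), so the argument is purely equational in $\Rl$.

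The main obstacle, then, is purely the residuated-lattice calculation in the converse direction: finding the right finite product of $*$-conjugate terms that bounds the ordinary conjugate from below. I would look for it by starting from $x(x\ld u)\le u$, conjugating/residuating to climb towards $u\ld xu$, and iterating; the expected shape of the witness is something like $\lambda^*_u(x)\cdot\rho^*_u(x)\le \lambda_u(x)$ on $L^-$, verified via Lemma~\ref{residuals}(3)(a)--(e). Once that inequality is in hand, closure under products, convexity, Lemma~\ref{absvalue}(5), and Lemma~\ref{normallm1} assemble the result with no further difficulty.
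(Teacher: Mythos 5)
There is a genuine gap, and it starts with your ``easy direction.'' The inequality $x \le (x\ld u)\ld u$ that you use to conclude $\abs{x}\le\lambda^*_u(x)$ is false in a general (non-commutative) residuated lattice: the residuation law gives $x(x\ld u)\le u$, hence $x\le u\rd(x\ld u)$, but \emph{not} $(x\ld u)\,x\le u$, which is what $x\le(x\ld u)\ld u$ would require. Concretely, in an $\ell$-group one computes $\lambda^*_u(x)=((x^{-1}u)^{-1}u)\mt\ut=(u^{-1}xu)\mt\ut$, a genuine conjugate of $x$, which need not lie above $\abs{x}$. There is also a structural reason your claim cannot stand: if closure under the $\lambda^*_u,\rho^*_u$ followed from convexity alone, the lemma would assert that \emph{every} convex subalgebra is normal, contradicting the example in Section~\ref{section<<hamiltonian} where $\{a,\ut\}$ is convex but not normal. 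The forward direction really does use normality: the paper applies normality to get $\lambda_{(x\ld u)}(x)=\bigl((x\ld u)\ld x(x\ld u)\bigr)\mt\ut\in H$, then uses $x(x\ld u)\le u$ and monotonicity of the residual in the numerator to get $\lambda_{(x\ld u)}(x)\le\lambda^*_u(x)\le\ut$, and finishes by convexity.

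The converse, which you correctly identify as the real content, is left as a search for a witness of the ``expected shape'' $\lambda^*_u(x)\,\rho^*_u(x)\le\lambda_u(x)$; this inequality is not established and is not the right move (the comparisons between $\lambda$ and $\lambda^*$ that actually hold, as noted after Theorem~\ref{newchar}, change the conjugating element rather than keeping $u$ fixed). The paper's argument is a one-line specialization: for $x\in H$ and $u\in L$, take $\lambda^*_{xu}(x)=\bigl((x\ld xu)\ld xu\bigr)\mt\ut\in H$; since $u\le x\ld xu$, antitonicity in the denominator gives $\lambda^*_{xu}(x)\le(u\ld xu)\mt\ut=\lambda_u(x)\le\ut$, so $\lambda_u(x)\in H$ by convexity, and symmetrically for $\rho_u$. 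So both halves of the proof hinge on choosing a \emph{different} parameter ($x\ld u$ in one direction, $xu$ in the other), not on products of $*$-conjugates with the same parameter.
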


\begin{proof}
Let $H$ be a convex normal subalgebra of $\m L$. Consider $x\in H$ and  $u\in L$. We proceed to show that $\lambda^*_u(x)\in H$ (with the proof of $\rho^*_u(x)\in H$ being analogous). By assumption, $\lam_{(x\ld u)}(x)=[(x\ld u)\ld x(x\ld
u)]\mt \ut\in H$. Thus, taking into account the inequality $x(x\ld u)\leq u$, we get
$\lam_{(x\ld u)}(x)\leq [(x\ld u)\ld u]\mt \ut=\lambda^*_u(x)\leq\ut.$ Thus, 
$\lambda^*_u(x)\in H$.

Conversely, suppose that $\lambda^*_u(x) \text{ and } \rho^*_u(x) \in H,$ for all $x\in H$ and $u\in L$. Let $x\in H$ and  $u\in L$. Then $\lambda^*_{xu}(x)=((x\ld xu)\ld xu)\mt \ut\in H$. Since $u\leq x\ld xu$, we have 
$\lambda^*_{xu}(x) \leq (u\ld xu)\mt \ut=\lam_u(x)\leq \ut$. Thus $\lam_u(x)\in H$. Likewise, $\rho_u(x)\in H$.
\end{proof}

An \emph{$*$-iterated conjugation} map is a composition $\gamma^* = \gamma^*_1  \gamma^*_2 \dots  \gamma^*_n$, where each $\gamma^*_i$ is a $\lambda^*_u$ or a $\rho^*_u$, for element $u\in L$. 
The set of all $*$-iterated conjugation maps will be denoted by $\Ga^*$.

The next result is an analogue of Lemma~\ref{congruencegeneration}. It follows from that result and the proof of Lemma~\ref{starlamrho}.

\begin{lemma}\label{congruencegeneration2}
Let $\m L$ be a residuated lattice, let $S \subseteq L$, and let $\abs{S}$ denote the set of absolute values of elements of $S$.  Let $\Ga^*$ be the set of all $*$-iterated conjugate maps on $\m L$, let  $\Ga^*[\abs{S}]=\{\ga^*(a)\colon a \in \abs{S}, \gamma^* \in \Ga^* \}$, and let  $\lan\Ga^*[\abs{S}]\ran$ be the submonoid of $\m L$ generated by $\Ga^*[\abs{S}]$. Then: 
\begin{enumerate}
\item[\rm{(1)}] The normal convex  subalgebra $\NC[S]$ of $\m L$ generated by $S$ is 
\begin{align*}
\NC[S]=\NC[\abs{S}] &= \{x\,\in L\colon y\leq x\leq y\ld\ut, \text{ for some } y\in \lan\Ga^*[\abs{S}]\ran\} \\
& = \{x\,\in L\colon y\leq \abs{x}, \text{ for some } y\in \lan\Ga^*[\abs{S}]\ran\}.
\end{align*}
\item[\rm{(2)}] The normal convex  subalgebra $\NC[a]$ of $\m L$ generated by an element $a\in L$ is
\begin{align*}
\NC[a]=\NC[\abs{a}]&= \{x\,\in L\colon y\leq x\leq y\ld\ut, \text{ for some } y\in \lan\Ga^*[\abs{a}]\ran\} \\
&= \{x\,\in L\colon y\leq \abs{x}, \text{ for some } y\in \lan\Ga^*[\abs{a}]\ran\}.
\end{align*}
\end{enumerate}
\end{lemma}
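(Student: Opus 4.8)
The plan is to derive the lemma from Lemma~\ref{congruencegeneration} by showing that the two submonoids $\lan\Ga[\abs S]\ran$ and $\lan\Ga^*[\abs S]\ran$ are interchangeable in the element-wise descriptions recorded there. Since every iterated conjugate and every iterated $*$-conjugate of an element of $L^-$ again lies in $L^-$ (each such map has the form $(\cdots)\mt\ut$), both submonoids are contained in $L^-$. It therefore suffices to prove the two ``mutual cofinality'' statements: (i) for every $h\in\lan\Ga[\abs S]\ran$ there is $h^*\in\lan\Ga^*[\abs S]\ran$ with $h^*\le h$; and (ii) for every $h^*\in\lan\Ga^*[\abs S]\ran$ there is $h\in\lan\Ga[\abs S]\ran$ with $h\le h^*$. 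Granting (i) and (ii), we get $\{x\colon y\le\abs x\text{ for some }y\in\lan\Ga^*[\abs S]\ran\}=\{x\colon y\le\abs x\text{ for some }y\in\lan\Ga[\abs S]\ran\}$, and by Lemma~\ref{congruencegeneration} the latter set is $\NC[S]=\NC[\abs S]$; moreover, applying Lemma~\ref{absvalue}\,(4) to each $y\in L^-$ shows that this set also equals $\{x\colon y\le x\le y\ld\ut\text{ for some }y\in\lan\Ga^*[\abs S]\ran\}$. Part~(2) is the special case $S=\{a\}$.

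The proof of (i) and (ii) rests on two observations. First, each of the maps $\lam_u,\rho_u,\lambda^*_u,\rho^*_u$ is order-preserving, which is immediate from the residuation properties. Second, the computations in the proof of Lemma~\ref{starlamrho} yield, for all $x,u\in L$, the pointwise inequalities $\lam_{(x\ld u)}(x)\le\lambda^*_u(x)$ and $\lambda^*_{xu}(x)\le\lam_u(x)$, together with the mirror inequalities relating $\rho_u$ and $\rho^*_u$. In words: every single conjugation of $x$ dominates some single $*$-conjugation of $x$, and every single $*$-conjugation of $x$ dominates some single conjugation of $x$, where the conjugating elements may depend on $x$.

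Both (i) and (ii) are then established factor by factor. A generator of $\lan\Ga[\abs S]\ran$ is of the form $\ga(s)$ with $s\in\abs S$ and $\ga=\delta_1\cdots\delta_k\in\Ga$, and we build a $*$-iterated conjugation $\ga^*$ with $\ga^*(s)\le\ga(s)$ by processing the composition from the innermost map outward: at the innermost step we replace $\delta_k$, applied to $s$, by a $*$-conjugation lying below it; at each later step we first push the element already obtained through $\delta_j$ using monotonicity, and then replace $\delta_j$, applied to that element, by a $*$-conjugation lying below it. Concatenating monotonicity with the pointwise inequalities along this chain gives $\ga^*(s)\le\ga(s)$. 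Taking the product of such inequalities over the finitely many letters of a word in $\lan\Ga[\abs S]\ran$ -- legitimate because multiplication is order-preserving -- yields the required $h^*\le h$, proving (i); statement (ii) is entirely symmetric, with $\Ga$ and $\Ga^*$ exchanged and the other family of pointwise inequalities used. Finally, $\abs S$ itself is accounted for, since $\lambda^*_s(s)\le s$ for $s\in L^-$ (indeed $s\ld s\ge\ut$ forces $(s\ld s)\ld s\le\ut\ld s=s$), so every $s\in\abs S$ dominates an element of $\lan\Ga^*[\abs S]\ran$.

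The one delicate point -- and the main obstacle -- is the bookkeeping in this factor-by-factor induction: one must keep track of which conjugating elements arise and in what order the maps are composed, so that after all replacements the outcome is genuinely an iterated conjugate (respectively, a $*$-iterated conjugate) of a single element of $\abs S$ and the chain of inequalities closes up. Beyond the monotonicity of the four conjugation maps and the two pointwise inequalities extracted from the proof of Lemma~\ref{starlamrho}, no further computation is needed.
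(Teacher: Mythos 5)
Your proposal is correct and follows essentially the same route as the paper, which disposes of this lemma by remarking that it ``follows from [Lemma~\ref{congruencegeneration}] and the proof of Lemma~\ref{starlamrho}'' -- i.e.\ precisely from the pointwise inequalities $\lambda^*_{xu}(x)\le\lam_u(x)$, $\lam_{x\ld u}(x)\le\lambda^*_u(x)$ and their mirror images, combined with monotonicity to get mutual cofinality of $\lan\Ga[\abs S]\ran$ and $\lan\Ga^*[\abs S]\ran$. You have merely written out the factor-by-factor bookkeeping that the paper leaves implicit, and that bookkeeping is sound.
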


The following theorem gives the promised alternative axiomatization of $\Semrl$ in terms of $*$-conjugations $\lambda^*_u$ and $\rho^*_u$.

\begin{theorem}\label{newchar}
For a variety $\vty{V}$ of residuated lattices, the following statements are equivalent: 
\begin{enumerate}
\item[\rm{(1)}] $\vty{V}$ is semilinear;

\item[\rm{(2)}] $\vty{V}$ satisfies either of the equations 
\begin{gather}
\label{semilinearaxioms3a}
\lambda^*_u ((x\jn y)\ld x) \jn \rho^*_v  ((x\jn y)\ld y) \eq \ut,\\
\label{semilinearaxioms3b}
\lambda^*_u (x\rd (x\jn y)) \jn \rho^*_v (y\rd (x\jn y)) \eq \ut;
\end{gather}
\item[\rm{(3)}] $\vty{V}$ satisfies either of the prelinearity laws and the quasi-idenitity
\begin{equation}\label{semilinearaxioms4}
x\jn y \ \eq\  \ut \quad\Rightarrow\quad \lambda^*_u(x) \jn \rho^*_v(y) \ \eq\  \ut.
\end{equation}

\end{enumerate}
\end{theorem}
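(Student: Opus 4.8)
The plan is to mimic the structure of the proof of Theorem~\ref{semilinear}, replacing every appeal to ordinary conjugation by the corresponding $*$-conjugation and using Lemma~\ref{congruencegeneration2} in place of Lemma~\ref{congruencegeneration}. We prove the cycle $(1)\Rightarrow(2)\Rightarrow(3)\Rightarrow(1)$.

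For $(1)\Rightarrow(2)$: the equations \eqref{semilinearaxioms3a} and \eqref{semilinearaxioms3b} are preserved under subdirect products, so it suffices to check that they hold in every totally ordered residuated lattice $\m L$. Fix $x,y\in L$; by symmetry assume $x\leq y$, so $x\jn y=y$, hence $(x\jn y)\ld x=y\ld x$ and $(x\jn y)\ld y=y\ld y\geq\ut$. Then the second disjunct $\rho^*_v((x\jn y)\ld y)=(v\rd(v\rd(y\ld y)))\mt\ut$; since $y\ld y\geq\ut$ and $*$-conjugates are monotone in their argument with $\rho^*_v(\ut)=\ut$, this disjunct equals $\ut$, and the join is $\ut$. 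The other case ($y\leq x$) and equation \eqref{semilinearaxioms3b} are symmetric. The one routine point to record is monotonicity of $\lambda^*_u,\rho^*_u$ and the identities $\lambda^*_u(\ut)=\rho^*_u(\ut)=\ut$, both immediate from residuation.

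For $(2)\Rightarrow(3)$: the substitution $u=v=\ut$ turns \eqref{semilinearaxioms3a} into $((( (x\jn y)\ld x)\ld\ut)\ld\ut)\mt\ut \jn (((  (x\jn y)\ld y)\ld\ut)\ld\ut)\mt\ut=\ut$; using $z\leq (z\ld\ut)\ld\ut$ (Lemma~\ref{absvalue}(2) applied suitably, or directly from residuation) together with $(x\jn y)\ld x\leq\ut$ one gets $((x\jn y)\ld x)\mt\ut \jn ((x\jn y)\ld y)\mt\ut=\ut$, and since $x=(x\jn y)\ld x$ fails in general — rather, one notes $(x\jn y)\ld x = (x\rd?)$ — more carefully: put $y':=x\jn y$, so $(y'\ld x)\mt\ut \jn (y'\ld y)\mt\ut=\ut$; specializing appropriately (swap roles of $x,y$, and use $x\jn y$ symmetric) yields the left prelinearity law \prp{LP}. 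For the quasi-identity, suppose $x\jn y=\ut$. Then $(x\jn y)\ld x=\ut\ld x=x$ and similarly $(x\jn y)\ld y=y$, so \eqref{semilinearaxioms3a} gives exactly $\lambda^*_u(x)\jn\rho^*_v(y)=\ut$, which is \eqref{semilinearaxioms4}.

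For $(3)\Rightarrow(1)$: this is the step that carries the real content and mirrors $(3)\Rightarrow(1)$ of Theorem~\ref{semilinear}. Assume \prp{LP} and the quasi-identity \eqref{semilinearaxioms4}. First, if $x\jn y=\ut$ then \eqref{semilinearaxioms4} gives $\lambda^*_u(x)\jn\rho^*_v(y)=\ut$ for all $u,v$; applying it again (the right-hand side has the form $z\jn w=\ut$ with $z=\lambda^*_u(x)$, or one iterates) we obtain $\gamma^*_1(x)\jn\gamma^*_2(y)=\ut$ for all $\gamma^*_1,\gamma^*_2\in\Ga^*$ — here one must check that \eqref{semilinearaxioms4} can be bootstrapped through compositions, i.e. that from $x\jn y=\ut$ we get $\lambda^*_u(x)\jn y=\ut=x\jn\rho^*_v(y)$ as well, which follows by applying the quasi-identity with the trivial conjugate $v=\ut$ (resp.\ $u=\ut$) on the other side, exactly as in the original proof. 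Second, Lemma~\ref{triviallemma} gives $x_1\cdots x_m\jn y_1\cdots y_n=\ut$ whenever all $x_i\jn y_j=\ut$. Combining these two facts with the description of $\NC[x]$ via $\lan\Ga^*[\abs{x}]\ran$ from Lemma~\ref{congruencegeneration2}(2), we conclude $\NC[x]\cap\NC[y]=\{\ut\}$ whenever $x\jn y=\ut$. Now if $\m L\in\vty{V}$ is subdirectly irreducible, apply this to $x=(a\ld b)\mt\ut$, $y=(b\ld a)\mt\ut$, which satisfy $x\jn y=\ut$ by \prp{LP}; subdirect irreducibility forces $\NC[x]=\{\ut\}$ or $\NC[y]=\{\ut\}$, i.e. $a\leq b$ or $b\leq a$, so $\m L$ is a chain. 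Hence $\vty{V}$ is semilinear. (If \prp{RP} holds instead of \prp{LP}, use \eqref{semilinearaxioms3b}/\eqref{semilinearaxioms4} with $\rd$ and argue symmetrically.)

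The main obstacle is the bootstrapping in $(3)\Rightarrow(1)$: one needs $\Ga^*$ to be ``generated'' from \eqref{semilinearaxioms4} in the sense that $x\jn y=\ut$ propagates to $\gamma^*_1(x)\jn\gamma^*_2(y)=\ut$ for \emph{all} pairs of $*$-iterated conjugates, including asymmetric ones acting on only one side. This is handled exactly as in Theorem~\ref{semilinear} by noting that the identity map is an iterated conjugate (take $u=\ut$), so \eqref{semilinearaxioms4} already yields $\lambda^*_u(x)\jn y=\ut$ and $x\jn\rho^*_v(y)=\ut$, and then one iterates on each side independently. Everything else is a routine transcription of the earlier argument with $\Ga$ replaced by $\Ga^*$, licensed by Lemma~\ref{congruencegeneration2}.
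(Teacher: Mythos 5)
Your overall strategy (re-running the proof of Theorem~\ref{semilinear} with $\Ga$ replaced by $\Ga^*$ and Lemma~\ref{congruencegeneration} by Lemma~\ref{congruencegeneration2}) is one of the two routes the paper explicitly offers, but there is a genuine gap in how you execute it, and it occurs twice for the same reason: you treat $\lambda^*_\ut$ and $\rho^*_\ut$ as if they were the truncation maps $z\mapsto z\mt\ut$, the way $\lambda_\ut$ and $\rho_\ut$ are. They are not. For instance, in any \emph{integral} residuated lattice one has $z\ld\ut=\ut$ for every $z$, hence $\lambda^*_\ut(z)=((z\ld\ut)\ld\ut)\mt\ut=\ut$ identically; so substituting $u=v=\ut$ into \eqref{semilinearaxioms3a} yields the tautology $\ut\jn\ut\eq\ut$ and cannot deliver \prp{LP}. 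Your attempted repair via ``$z\leq(z\ld\ut)\ld\ut$'' points the inequality the wrong way: knowing $\lambda^*_\ut(z)\geq z\mt\ut$ and $\lambda^*_\ut(z)\jn\rho^*_\ut(w)=\ut$ gives no bound of the form $(z\mt\ut)\jn(w\mt\ut)=\ut$. The same defect undermines the bootstrapping in $(3)\Rightarrow(1)$: ``the identity map is an iterated $*$-conjugate (take $u=\ut$)'' is false, so you cannot pass from $\lambda^*_u(x)\jn\rho^*_v(y)=\ut$ to $\lambda^*_u(x)\jn y=\ut=x\jn\rho^*_v(y)$ by setting $v=\ut$ (resp.\ $u=\ut$).

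Both steps can be repaired, and the repair is exactly the paper's actual argument: the proof of Lemma~\ref{starlamrho} establishes the sandwich inequalities $\lambda^*_{xu}(x)\leq\lambda_u(x)$, $\lambda_{x\ld u}(x)\leq\lambda^*_u(x)$ and symmetrically $\rho^*_{ux}(x)\leq\rho_u(x)$, $\rho_{u\rd x}(x)\leq\rho^*_u(x)$. In particular $\lambda^*_{x}(x)=\lambda^*_{x\cdot\ut}(x)\leq\lambda_\ut(x)=x\mt\ut$ and $\rho^*_{y}(y)\leq y\mt\ut$, so the substitutions $u:=(x\jn y)\ld x$, $v:=(x\jn y)\ld y$ (not $u=v=\ut$) extract \prp{LP} from \eqref{semilinearaxioms3a}, and $u:=x$, $v:=y$ carry out the bootstrapping step. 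More efficiently, these inequalities show that for fixed $x,y$ the family of equations $\lambda_u(x)\jn\rho_v(y)=\ut$ (all $u,v$) is equivalent to the family $\lambda^*_u(x)\jn\rho^*_v(y)=\ut$ (all $u,v$), whence the $*$-versions of the (quasi-)identities are equivalent to \eqref{semilinearaxioms1a}, \eqref{semilinearaxioms1b}, \eqref{semilinearaxioms2}, and the whole theorem reduces to Theorem~\ref{semilinear} with no need to redo the subdirect-irreducibility argument. Your $(1)\Rightarrow(2)$ and the quasi-identity half of $(2)\Rightarrow(3)$ are fine as written.
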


\begin{proof} 
We can mimic the corresponding part of the proof of Theorem~\ref{semilinear}, or we can simply observe that the (quasi-)identities \eqref{semilinearaxioms1a}, \eqref{semilinearaxioms1b} and \eqref{semilinearaxioms2} are equivalent to the (quasi-)identities \eqref{semilinearaxioms3a}, \eqref{semilinearaxioms3b} and \eqref{semilinearaxioms4}, respectively.
Indeed, when proving Lemma~\ref{starlamrho}, we have actually shown that the inequalities 
$\lambda^*_{xy}(x) \leq \lambda_y(x)$ and $\lambda_{x\ld y}(x) \leq \lambda^*_y(x)$, 
and symmetrically, $\rho^*_{yx}(x) \leq \rho_y(x)$ and $\rho_{y\rd x}(x)\leq\rho^*_y(x)$, 
hold in any residuated lattice $\m L$. 
Hence, it is easy to see that, for any $x,y\in L$, if $\m L$ satisfies $\lambda_u(x) \jn \rho_v(y)=\ut$ for all $u,v\in L$, then $\m L$ satisfies $\lambda^*_u(x) \jn \rho^*_v(y)=\ut$ for all $u,v\in L$, and vice versa.
\end{proof}

We should remark that similarly to Theorem~\ref{semilinear}, the identities \eqref{semilinearaxioms3a} and \eqref{semilinearaxioms3b} in Theorem~\ref{newchar} could be replaced by the identities
\begin{gather*}
\lambda^*_u((y\ld x)\mt\ut)\jn \rho^*_v((x\ld y)\mt\ut)\eq \ut,\\
\lambda^*_u((x\rd y)\mt\ut)\jn \rho^*_v((y\rd x)\mt\ut)\eq \ut.
\end{gather*}


\section{Hamiltonian residuated lattices}\label{section<<hamiltonian}

This section is devoted to $\ut$-cyclic residuated lattices whose convex subalgebras are normal. 
An $\ut$-cyclic residuated lattice with this property will be called \emph{Hamiltonian}.\footnote{The term is borrowed from group theory, where it is usually used to designate a non-commutative group whose subgroups are normal. A typical example of a Hamiltonian group is  the  quaternion group.} 
Clearly, every Abelian $\ell$-group is Hamiltonian, while $\ell$-groups whose group reducts are nilpotent groups serve as interesting examples of non-commutative residuated lattices that are Hamiltonian (\cite{Kop75}).
Moreover, it is known that the class $\mathcal{H}am\mathcal{LG}$ of all $\ell$-groups with this property is merely a torsion class, while the largest variety contained in $\mathcal{H}am\mathcal{LG}$ is the variety of weakly Abelian $\ell$-groups (\cite{Mart72}, \cite{Rei83}).
As we will see, the analogy fails to be true for $\ut$-cyclic residuated lattices. More specifically, there is no maximal variety of Hamiltonian $\ut$-cyclic residuated lattices.

We first characterize individual Hamiltonian residuated lattices:

\begin{lemma}\label{lemma<<Hamiltonian}
For an $\ut$-cyclic residuated lattice $\m L$, the following are equivalent:
\begin{enumerate}
\item[\rm{(1)}]
$\m L$ is Hamiltonian.
\item[\rm{(2)}]
For all $a\in L^-$ and $b\in L$,  there exist $m,n\in\N$ such that $a^m\leq \lam_b(a)$ and $a^n\leq \rho_b(a)$.
\item[\rm{(3)}]
For all $a, b\in L$ there exist $m,n\in\N$ such that $(a\mt \ut)^m\leq \lam_b(a)$ and $(a\mt\ut)^n\leq \rho_b(a)$.
\item[\rm{(4)}]
For all $a,b\in L$ there exist $m,n\in\N$ such that $\abs{a}^m\leq  \lam_b(a)$ and $\abs{a}^n\leq \rho_b(a)$.
\end{enumerate}
\end{lemma}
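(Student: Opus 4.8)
The plan is to prove the equivalence of the four conditions in Lemma~\ref{lemma<<Hamiltonian} by first establishing the characterization of normality of a principal convex subalgebra via conjugates, and then reading off the cyclic chain of implications $(1)\Rightarrow(4)\Rightarrow(3)\Rightarrow(2)\Rightarrow(1)$, with the equivalences among $(2)$, $(3)$, $(4)$ being essentially formal manipulations of absolute values.

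First I would observe that, by Lemma~\ref{normallm1} (or its $*$-variant, Lemma~\ref{starlamrho}), $\m L$ is Hamiltonian iff every \emph{principal} convex subalgebra $\C[a]$ is normal, since an arbitrary convex subalgebra is the join of the principal ones it contains and $\nc{L}$ is closed under the joins computed in $\cs{L}$ (cf.\ the proposition that $\nc{L}$ is a complete sublattice of $\cs{L}$). Moreover, by Lemma~\ref{absvalue}(5) and Corollary~\ref{CS1} we may assume $a\in L^-$, so that $\C[a]=\{x: a^n\le x\le a^n\ld\ut\text{ for some }n\}$. Now $\C[a]$ is normal precisely when it is closed under all left and right conjugations, i.e.\ when for every $b\in L$ we have $\lam_b(a),\rho_b(a)\in\C[a]$ (closure on the single generator $a$ suffices, since conjugation maps are monotone and submultiplicative enough that $\gamma$ applied to products and to elements between $a^n$ and $a^n\ld\ut$ stays inside — this is exactly the content of Lemma~\ref{congruencegeneration}). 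Since $\lam_b(a)\le\ut$ and $\rho_b(a)\le\ut$ always, membership of $\lam_b(a)$ in $\C[a]$ is equivalent (using Corollary~\ref{CS1}, the ``$h\le\abs{x}$'' form) to the existence of $m$ with $a^m\le\lam_b(a)$, and similarly for $\rho_b$. This gives $(1)\Leftrightarrow(2)$ directly.

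For the remaining conditions, $(4)\Rightarrow(3)$ because $\abs{a}\le a\mt\ut$ gives $\abs{a}^m\le(a\mt\ut)^m$, so an $m$ working for $(3)$ in the form ``$(a\mt\ut)^m\le\lam_b(a)$'' is replaced by noting we actually need the reverse — so more carefully: $(3)\Rightarrow(4)$ uses $\abs{a}\le a\mt\ut\le\ut$ hence $\abs{a}^m\le(a\mt\ut)^m\le\lam_b(a)$, and $(2)$ is just $(3)$ restricted to $a\in L^-$ where $a=a\mt\ut=\abs{a}$, so $(3)\Rightarrow(2)$ and $(4)\Rightarrow(2)$ are trivial specializations; for $(2)\Rightarrow(4)$ and $(2)\Rightarrow(3)$ one replaces $a$ by $\abs{a}\in L^-$ and uses $\lam_b(\abs{a})\le\lam_b(a)$ (monotonicity of conjugation, since $\abs{a}\le a$), so the inequality $\abs{a}^m\le\lam_b(\abs{a})\le\lam_b(a)$ carries over, and likewise $\abs{a}=a\mt\ut$ when convenient. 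Thus all of $(2)$, $(3)$, $(4)$ are interderivable, and together with $(1)\Leftrightarrow(2)$ the lemma follows.

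The main obstacle I anticipate is the reduction in the first paragraph: justifying cleanly that checking closure of $\C[a]$ under conjugation on the \emph{single} element $a$ (rather than on all of $\C[a]$) is enough, and that closure of all principal convex subalgebras implies closure of all convex subalgebras. Both points are handled by Lemma~\ref{congruencegeneration}: the normal convex subalgebra generated by $a$ is $\C[\Ga[\abs{a}]]$, so $\C[a]$ is normal iff $\Ga[\abs{a}]\subseteq\C[a]$ iff $\lam_b(\abs{a}),\rho_b(\abs{a})\in\C[a]$ for all $b$ (iterating is automatic once one step stays inside), and a general convex subalgebra $H=\bigvee\{\C[a]:a\in H\}$ is then a join of normal subalgebras hence normal. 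The rest is routine monotonicity and the elementary inequalities $\abs{a}\le a\mt\ut\le a$, $\abs{a}\le\ut$, plus $a=\abs{a}$ for $a\in L^-$.
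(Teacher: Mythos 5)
Your proof is correct, and the equivalences among (2), (3) and (4) are handled exactly as in the paper (apply the negative-cone case to $a\mt\ut$ and to $\abs{a}$, and use $\abs{a}\leq a\mt\ut$ together with monotonicity of $\lam_b,\rho_b$). Where you diverge is in tying these conditions back to (1): the paper closes the cycle with a very short $(4)\Rightarrow(1)$ --- given $H\in\cs{L}$, $a\in H$ and $b\in L$, condition (4) yields $\lam_b(a),\rho_b(a)\in\C[a]\subseteq H$, which is literally the definition of normality of $H$, with no auxiliary machinery. You instead prove $(1)\Leftrightarrow(2)$ by reducing to principal convex subalgebras, invoking the fact that $\nc{\m L}$ is a complete sublattice of $\cs{\m L}$ (so that $H=\bigvee_{a\in H}\C[a]$ is normal when each $\C[a]$ is) and Lemma~\ref{congruencegeneration} to justify that single-step closure on the generator suffices. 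This is all sound --- the iteration argument works because $\lam_u(x)\lam_u(y)\leq\lam_u(xy)$ --- but it is heavier than necessary: since every element of $H$ lies in some principal $\C[a]\subseteq H$, one can verify normality of $H$ elementwise without any appeal to joins or iterated conjugates. Two small points to tidy up: your phrase ``$\abs{a}=a\mt\ut$ when convenient'' is false in general ($\abs{a}=a\mt(\ut\rd a)\mt\ut$ can be strictly below $a\mt\ut$); for $(2)\Rightarrow(3)$ you should substitute $a\mt\ut\in L^-$ into (2), not $\abs{a}$, since $\abs{a}^m\leq\lam_b(a)$ does not imply $(a\mt\ut)^m\leq\lam_b(a)$. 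Also note that the false start ``$(4)\Rightarrow(3)$ because $\abs{a}\leq a\mt\ut$\dots'' should simply be deleted, as you yourself correct it mid-sentence.
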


\begin{proof} 
(1) implies (2):
For every $a\in L^-$, the convex subalgebra $\C[a]$ is normal, so $\lam_b(a), \rho_b(a)\in \C[a]$ for every $b\in L$. Hence, by Corollary~\ref{CS1}, there exist $m,n\in\N$ such that $a^m\leq \lam_b(a)$ and $a^n\leq \rho_b(a)$.

(2) implies (3):
Clearly, $(a\mt\ut)^m\leq \lam_b(a\mt\ut)\leq \lam_b(a)$ and, likewise, $(a\mt\ut)^n\leq \rho_b(a)$.

(3) implies (4): 
It suffices to observe that $\abs{a}^k\leq (a\mt\ut)^k$ for any $k\in\N$.

(4) implies (1):
Let $H\in\cs{L}$. For all $a\in H$ and $b\in L$, there exist $m,n\in\N$ with such that $\abs{a}^m\leq  \lam_b(a)$ and $\abs{a}^n\leq \rho_b(a)$. This means that $\lam_b(a), \rho_b(a)\in\C[a]$, and since $\C[a]\subseteq H$, we conclude that $H$ is normal.
\end{proof}

A class of residuated lattices will be called \emph{Hamiltonian} if all its members are Hamiltonian.

\begin{theorem}\label{thm<<hamiltonian}
Let $\mathcal{K}$ be a class of $\ut$-cyclic residuated lattices that is closed under direct products. Then $\mathcal{K}$ is Hamiltonian if and only if there exist $m,n\in\N$ such that $\mathcal{K}$ satisfies the identities
\begin{equation}\label{EQ:HAM1}
(x\mt\ut)^m\leq \lam_y(x) \quad\text{and}\quad (x\mt\ut)^n\leq \rho_y(x),
\end{equation}
or, equivalently, the identities
\begin{equation}\label{EQ:HAM2}
\abs{x}^m\leq \lam_y(x) \quad\text{and}\quad \abs{x}^n\leq \rho_y(x).
\end{equation}
\end{theorem}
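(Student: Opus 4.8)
The plan is to prove the two implications separately, with the ``if'' direction being the routine one and the ``only if'' direction carrying the real content. For the ``if'' direction, suppose $\mathcal{K}$ satisfies the identities \eqref{EQ:HAM1} for some fixed $m,n\in\N$ (the equivalence of \eqref{EQ:HAM1} and \eqref{EQ:HAM2} follows, as in the proof of Lemma~\ref{lemma<<Hamiltonian}, from $\abs{x}^k\leq(x\mt\ut)^k$ together with $\lam_y$ being order-preserving and the observation that $(x\mt\ut)^m\leq\lam_y(x\mt\ut)\leq\lam_y(x)$). Then every $\m L\in\mathcal{K}$ satisfies condition (3) of Lemma~\ref{lemma<<Hamiltonian} with the \emph{same} $m,n$ working for all $a,b$, hence $\m L$ is Hamiltonian; so $\mathcal{K}$ is Hamiltonian. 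Note this direction does not even use closure under products.

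For the ``only if'' direction, assume $\mathcal{K}$ is Hamiltonian; we must produce \emph{uniform} exponents $m,n$. The natural idea is a compactness/ultraproduct-style argument phrased directly via direct products. Suppose, for contradiction, that no single $m$ works for the left-hand identity $(x\mt\ut)^m\leq\lam_y(x)$. Then for each $k\in\N$ there is an algebra $\m L_k\in\mathcal{K}$ and elements $a_k,b_k\in L_k$ such that $(a_k\mt\ut)^k\not\leq\lam_{b_k}(a_k)$. Form the direct product $\m L=\prod_{k\in\N}\m L_k$, which lies in $\mathcal{K}$ by hypothesis, and consider the elements $a=(a_k)_k$ and $b=(b_k)_k$. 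By Lemma~\ref{lemma<<Hamiltonian}, applied to $\m L$, there exists a single $m\in\N$ with $(a\mt\ut)^m\leq\lam_b(a)$ in $\m L$; projecting to the $m$-th coordinate (or any coordinate $k\geq m$, using that $(a_k\mt\ut)^k\leq(a_k\mt\ut)^m$ since $a_k\mt\ut\leq\ut$) gives $(a_k\mt\ut)^m\leq(a_k\mt\ut)^k\leq$ —wait, the inequality goes the wrong way; rather one uses $(a_m\mt\ut)^m\leq\lam_{b_m}(a_m)$ directly at coordinate $k=m$, contradicting the choice of $\m L_m$. This yields a uniform $m$. An identical argument on the right residual yields a uniform $n$, and one may then replace $m,n$ by their maximum so that a single pair works for both identities simultaneously.

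The main obstacle is purely bookkeeping: making sure the coordinatewise reading of multiplication, meet, and the conjugation maps $\lam_y,\rho_y$ in a direct product is used correctly, and that the ``bad'' witnesses $a_k,b_k$ are indexed so that the coordinate extracted from the product genuinely contradicts the failure at that stage. Since $\lam_y(x)=(y\ld xy)\mt\ut$ is built from operations that are computed componentwise in a product, and $(x\mt\ut)^m$ likewise, the identity $(x\mt\ut)^m\leq\lam_y(x)$ holds in $\prod_k\m L_k$ iff it holds in every $\m L_k$; this is the one fact that must be stated cleanly. Once the uniform exponents are in hand, the conclusion that $\mathcal{K}$ satisfies \eqref{EQ:HAM1}, and hence \eqref{EQ:HAM2}, is immediate, completing the equivalence.
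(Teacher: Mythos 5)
Your proposal is correct and follows essentially the same route as the paper: the ``if'' direction is immediate from Lemma~\ref{lemma<<Hamiltonian}, and the ``only if'' direction assembles the witnesses $(a_k,b_k)$ of failure at each exponent $k$ into a direct product and derives a contradiction with condition (3) of that lemma by reading off coordinate $k=m$. The mid-paragraph false start about coordinates $k\geq m$ is harmless (and in fact that variant also works, since $(a_k\mt\ut)^k\leq(a_k\mt\ut)^m$ for $k\geq m$), and your handling of the equivalence of \eqref{EQ:HAM1} and \eqref{EQ:HAM2} via substituting $x\mt\ut$ is fine.
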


\begin{proof}
Let $\mathcal{K}$ be Hamiltonian and suppose to the contrary that $\mathcal{K}$ does not satisfy the identity $(x\mt\ut)^m\leq  \lam_y(x)$ for any $m\in\N$ (the case when the identity $(x\mt\ut)^n\leq \rho_y(x)$ fails for all $n\in\N$ is parallel).
Then for every $i\in\N$ there exists $\m L_i\in\mathcal{K}$ and $a_i,b_i\in L_i$ such that $(a_i\mt\ut)^i\nleq \lam_{b_i}(a_i)$.
Let $\m L$ be the direct product $\prod_{i\in \N} \m L_i$. By the hypothesis, $\m L$ belongs to $\mathcal{K}$, so it is Hamiltonian. However, if we take the sequences $a=\lan a_1,a_2,\dots\ran$ and $b=\lan b_1,b_2,\dots\ran$, then  $(a\mt\ut)^m\nleq \lam_b(a)$,  for every $m\in\N$. Thus, in view of Lemma \ref{lemma<<Hamiltonian}, $\m L$ cannot be Hamiltonian, since the convex subalgebra $\C[a\mt\ut]$ of $\cs{L}$ is not normal.

The opposite direction is obvious: if $\mathcal{K}$ satisfies \eqref{EQ:HAM1} for some $m,n\in\N$, then by Lemma \ref{lemma<<Hamiltonian}, every $\m L$ in $\mathcal{K}$ is Hamiltonian.

The same arguments show that $\mathcal{K}$ is Hamiltonian exactly if it fulfills \eqref{EQ:HAM2} for some $m,n\in\N$.
\end{proof}

Note that in Lemma \ref{lemma<<Hamiltonian}, as well as in Theorem \ref{thm<<hamiltonian}, the conjugations $\lambda_u,\rho_u$ can be replaced by $\lambda^*_u,\rho^*_u$.

It is known (\cite{Mart72}, \cite{Rei83}) that there exists a largest Hamiltonian variety of $\ell$-groups, viz., the variety of \emph{weakly Abelian} $\ell$-groups, which is defined by the identity $(x\mt\ut)^2 y\leq yx$. This identity bears striking similarity with the identities $y(x\mt\ut)^m\leq xy$ and $(x\mt\ut)^n y\leq yx$, which are equivalent versions of  the identities \eqref{EQ:HAM1} above. The natural question arises whether arbitrary values of $m$ and $n$ in the preceding equations can be replaced by $m=n=2$, or some other fixed value.  The next result shows that this is not the case. In particular, there is no largest Hamiltonian variety of $\ut$-cyclic residuated lattices.

\begin{theorem}\label{thm<<hamiltonian2}
Let $\mathcal{H}_{m,n}$ denote the Hamiltonian variety of $\ut$-cyclic residuated lattices satisfying the identity \eqref{EQ:HAM1}.  
Then the map $\lan m,n\ran \mapsto \mathcal{H}_{m,n}$ is an order-embedding of $(\N^2,\leq)$ into the lattice of Hamiltonian varieties of $\ut$-cyclic residuated lattices.  In particular, there is no largest Hamiltonian variety of $\ut$-cyclic residuated lattices.
\end{theorem}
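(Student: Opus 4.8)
The plan is to show that $\langle m,n\rangle\mapsto\mathcal{H}_{m,n}$ is monotone and order-reflecting; the ``in particular'' clause then follows. Monotonicity is immediate: in every residuated lattice $x\mt\ut\le\ut$, so $(x\mt\ut)^{m'}\le(x\mt\ut)^{m}$ and $\abs{x}^{m'}\le\abs{x}^{m}$ whenever $m'\ge m$; hence any algebra satisfying \eqref{EQ:HAM1} for $m,n$ also satisfies it for every $m'\ge m$ and $n'\ge n$, giving $\mathcal{H}_{m,n}\subseteq\mathcal{H}_{m',n'}$ whenever $\langle m,n\rangle\le\langle m',n'\rangle$. For order-reflection it suffices to produce, for each $\langle m,n\rangle\in\N^2$, an $\ut$-cyclic residuated lattice $\m{L}_{m,n}$ that lies in $\mathcal{H}_{m,n}$ but in no $\mathcal{H}_{m',n'}$ with $m'<m$ or $n'<n$; for then $\mathcal{H}_{m,n}\subseteq\mathcal{H}_{m',n'}$ forces $\m{L}_{m,n}\in\mathcal{H}_{m',n'}$, hence $m\le m'$ and $n\le n'$, and with monotonicity this makes the map an order-embedding.

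To build the $\m{L}_{m,n}$ I would reduce to a one-sided, one-parameter problem. The opposite residuated lattice $\m{P}^{\mathrm{op}}$ of an $\ut$-cyclic residuated lattice $\m{P}$ (same lattice, multiplication reversed, residuals interchanged) is again an $\ut$-cyclic residuated lattice, and passing to the opposite swaps the two conjugations: $\lam^{\m{P}^{\mathrm{op}}}_u(x)=\rho^{\m{P}}_u(x)$ and $\rho^{\m{P}^{\mathrm{op}}}_u(x)=\lam^{\m{P}}_u(x)$. Moreover, an identity holds in a direct product iff it holds in each factor, so the least $j\in\N$ for which $(x\mt\ut)^{j}\le\lam_y(x)$ holds in a product is the maximum of its values on the factors, and symmetrically for $\rho$. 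Hence, if for every $m\ge1$ one can construct an $\ut$-cyclic residuated lattice $\m{P}_m$ in which $x\mt\ut\le\rho_y(x)$ holds identically while the least $j\in\N$ such that $(x\mt\ut)^{j}\le\lam_y(x)$ holds in $\m{P}_m$ is exactly $m$, then $\m{L}_{m,n}:=\m{P}_m\times\m{P}_n^{\mathrm{op}}$ --- whose second factor has left threshold $1$ and right threshold $n$ --- is an $\ut$-cyclic residuated lattice whose least ``left threshold'' is $\max(m,1)=m$ and whose least ``right threshold'' is $\max(1,n)=n$. Such an $\m{L}_{m,n}$ satisfies \eqref{EQ:HAM1} with parameters $m,n$ --- so it lies in $\mathcal{H}_{m,n}$, and is Hamiltonian by Lemma~\ref{lemma<<Hamiltonian} --- while it fails \eqref{EQ:HAM1} for any $m'<m$ or $n'<n$, as required. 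For $m=1$ one may take $\m{P}_1$ to be any commutative $\ut$-cyclic residuated lattice, e.g.\ the trivial one.

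The heart of the argument --- and the step I expect to be the main obstacle --- is the construction, for each $m\ge2$, of the witness $\m{P}_m$: an $\ut$-cyclic residuated lattice in which right conjugation is tame ($x\mt\ut\le\rho_y(x)$ identically) while $(x\mt\ut)^{m}\le\lam_y(x)$ holds identically but $a^{m-1}\nleq\lam_b(a)$ for some negative $a$ and some $b$. No $\ell$-group can play this role: in an $\ell$-group the substitution $y\mapsto y^{-1}$ shows the least left and least right thresholds coincide, and tameness of right conjugation forces $ay\le ya$ for every $a\in L^-$ and $y\in L$, hence forces the negative cone to be commutative --- which in turn rules out $\ell$-groups and integral residuated lattices for $m\ge2$. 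One must therefore use a genuinely non-group, non-integral structure: for instance a semidirect-product-type construction over an abelian $\ell$-group that adjoins suitable ``contracting operator'' elements, tuned so that conjugating a negative element $a$ on the left reproduces exactly $a^{m}$ while conjugation on the right contracts nothing. Establishing the upper bound $(x\mt\ut)^{m}\le\lam_y(x)$ is a computation inside the chosen structure; the lower bound needs an explicit pair $a,b$ with $a^{m-1}\nleq\lam_b(a)$; and the delicate point is to calibrate the structure so that the threshold is exactly $m$.

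Finally, the ``in particular'' assertion follows from order-reflection together with Theorem~\ref{thm<<hamiltonian}. Were there a largest Hamiltonian variety $\mathcal{M}$ of $\ut$-cyclic residuated lattices, then $\mathcal{M}$, being closed under direct products, would satisfy \eqref{EQ:HAM1} for some fixed $m_0,n_0$, so $\mathcal{M}\subseteq\mathcal{H}_{m_0,n_0}$; since $\mathcal{H}_{m_0,n_0}$ is itself Hamiltonian, maximality of $\mathcal{M}$ would give $\mathcal{M}=\mathcal{H}_{m_0,n_0}$, whereas the Hamiltonian variety $\mathcal{H}_{m_0+1,n_0}$ is not contained in $\mathcal{H}_{m_0,n_0}$ by order-reflection --- a contradiction.
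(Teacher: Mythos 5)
Your skeleton coincides with the paper's: monotonicity of $\lan m,n\ran\mapsto\mathcal{H}_{m,n}$ is the trivial half; the separating algebra for $\lan k,l\ran<\lan m,n\ran$ is taken (there as here) to be a product $\m B_m\times\m B^*_n$ of a ``left'' witness with the mirror image of a second one, so your opposite-algebra formulation of the reduction is exactly right; and your derivation of the ``no largest Hamiltonian variety'' clause from Theorem~\ref{thm<<hamiltonian} is the paper's. The genuine gap is the one you yourself flag as the main obstacle: the witness $\m P_m$ is never constructed, only described by the properties it ought to have, and without it nothing beyond the inclusions $\mathcal H_{k,l}\subseteq\mathcal H_{m,n}$ has been established --- that construction is the entire content of the theorem. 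For the record, the paper's witness $\m B_n$ is not a semidirect-product construction but a totally ordered pseudo-MV-algebra modelled on the Jipsen--Montagna ``kite'': its universe is $H\cup\R^+$ with $H=\{\lan nx,x\ran\colon x\in\R^-\}$ sitting above $\R^+$, multiplication on $H$ is coordinatewise addition, products of two elements of $\R^+$ collapse to the bottom, and the mixed products $x\pd\lan ny,y\ran=0\jn(x+y)$ versus $\lan ny,y\ran\pd x=0\jn(ny+x)$ are asymmetric in the two coordinates. One then computes $y\ld(\lan nx,x\ran\pd y)=\lan n^2x,nx\ran=\lan nx,x\ran^n$ for suitable $y\in\R^+$, so the left conjugation threshold on $H$ is exactly $n$, while $(y\pd\lan nx,x\ran)\rd y\geq\lan nx,x\ran$.

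A second point, which your (correct) no-go analysis brings out: $\m B_n$ is integral (its unit $\lan 0,0\ran$ is the top), so by your own argument it cannot satisfy $x\mt\ut\leq\rho_y(x)$ identically while being non-commutative --- and indeed the identity with right exponent $1$ fails in $\m B_n$ when one conjugates an element of $\R^+$ by an element of $H$; it holds only from exponent $2$ on, because $x^2=\bot$ for $x\in\R^+$. So your plan of demanding a right threshold of \emph{exactly} $1$ is strictly harder than what the paper's witnesses deliver, and your observation that $\ell$-groups, integral algebras, and (by the GBL decomposition) GBL-algebras are all excluded suggests it may be very hard to realize at all. The workable version is to aim for left threshold exactly $m$ and right threshold at most $2$ (checking the conjugation inequalities for \emph{all} ordered pairs, including conjugates of the bottom-layer elements --- the step where exponent $1$ quietly fails), and then to verify that such witnesses still suffice for the order-embedding claim at the boundary $n=1$ or $m=1$. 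Until a concrete $\m P_m$ (or $\m B_m$) is produced and these inequalities are actually checked, the proof is not complete.
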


\begin{proof}
Let $\mathcal{H}_{m,n}$ be the Hamiltonian variety defined by \eqref{EQ:HAM1}. 
It is clear that $\mathcal H_{k,l}\subseteq \mathcal H_{m,n}$ whenever $\lan k,l\ran \leq \lan m,n\ran$ in $(\N^2,\leq)$. 
By the previous theorem, every Hamiltonian variety is contained in $\mathcal{H}_{m,n}$ for some $m,n\in\N$, thus a maximal Hamiltonian variety would be of this form.

First, we construct an example of a Hamiltonian $\ut$-cyclic residuated lattice (in fact, a linearly ordered pseudo-MV-algebra) that belongs to $\mathcal{H}_{n,1}$ but not to $\mathcal{H}_{n-1,1}$ (for an arbitrary fixed $n\in \N$, $n\geq 2$).

Given $n\in\N$, let $H=\{\langle nx,x\rangle\colon x\in\R^-\}$ and let $\m B_n$ be the residuated chain 
$(H\cup\R^+,\pd,\ld,\rd,\jn,\mt,\lan 0,0\ran)$, 
where $H$ and $\R^+$ are linearly ordered in the natural way and $x\leq \langle ny,y\rangle$ for all $x\in\mathbb{R}^+$ and $\langle ny,y\rangle\in H$, and where the operation $\cdot$  is defined by
\begin{align*}
\langle nx,x\rangle \pd \langle ny,y\rangle &=\langle n(x+y),x+y\rangle &&\mbox{if }\langle nx,x\rangle,\langle ny,y\rangle\in H,\\
x \pd \langle ny,y\rangle &= 0\vee (x+y) &&\mbox{if } x\in \R^+, \langle ny,y\rangle\in H,\\
\langle ny,y\rangle \pd x &= 0\vee (ny+x) &&\mbox{if } x\in \R^+, \langle ny,y\rangle\in H,\\
x \pd y&=0 &&\mbox{if } x,y\in\R^+.
\end{align*}
The residuals $\ld$ and $\rd$ are then defined as follows:
\begin{align*}
\langle nx,x\rangle \ld\langle ny,y\rangle &=\langle 0\mt n(y-x),0\mt (y-x)\rangle &&\mbox{if }\langle nx,x\rangle,\langle ny,y\rangle\in H,\\
x \ld \langle ny,y\rangle &= \langle 0,0\rangle &&\mbox{if } x\in \R^+, \langle ny,y\rangle\in H,\\
\langle ny,y\rangle\ld x &=  x-ny &&\mbox{if } x\in \R^+, \langle ny,y\rangle\in H,\\
x\ld y &=\langle 0\mt n(y-x),0\mt (y-x)\rangle &&\mbox{if } x,y\in\R^+,
\end{align*}
\begin{align*}
\langle ny,y\rangle/\langle nx,x\rangle &= \langle 0\mt n(y-x),0\mt (y-x)\rangle &&\mbox{if }\langle nx,x\rangle,\langle ny,y\rangle\in H,\\
\langle ny,y\rangle /x &= \langle 0,0\rangle &&\mbox{if } x\in \R^+, \langle ny,y\rangle\in H,\\
x/\langle ny,y\rangle &=  x-y &&\mbox{if } x\in \R^+, \langle ny,y\rangle\in H,\\
y/x &= \langle 0\mt (y-x),0\mt \textstyle{\frac{1}{n}}(y-x)\rangle &&\mbox{if } x,y\in\R^+.
\end{align*}

Of course, if two elements commute, then they satisfy both inequalities of \eqref{EQ:HAM1}. 
For $\langle nx,x\rangle\in H$ and $y\in\R^+$ we have
\[
y\ld (\langle nx,x\rangle\pd y) \geq \langle n^2x,nx \rangle = \langle nx,x\rangle^n
\]
and
\[
(y\pd \langle nx,x\rangle)/y \geq \langle x,\textstyle{\frac{1}{n}}x \rangle \geq \langle nx,x\rangle.
\]
Hence $\m B_n\in\mathcal{H}_{n,1}$.
However, we also have 
\[
n\ld (\langle -n,-1\rangle \pd n)= \langle -n^2,-n\rangle = \langle -n,-1\rangle^n,
\]
and so $\m B_n\notin\mathcal{H}_{n-1,1}$.  

Symmetrically, for any $n\in\N$ we can construct a Hamiltonian residuated chain, say $\m B^*_n$, that belongs to $\mathcal{H}_{1,n}\setminus\mathcal H_{1,n-1}$.  
Consequently, if we are given $\lan k,l\ran < \lan m,n\ran$, then $\m B_m\times \m B^*_n\in \mathcal{H}_{m,n} \setminus \mathcal{H}_{k,l}$. Thus $\mathcal{H}_{k,l}\subset\mathcal{H}_{m,n}$.
\end{proof}

The construction of the residuated lattice $\m B_n$ in the proof above has been inspired by the ``kite'' residuated lattice from \cite{JM06}, Section 3.

The class of all Hamiltonian $\ell$-groups is not a variety, but it is a torsion class (see e.g. \cite{AF88}, \cite{Dar95}), that is, it is closed under convex $\ell$-subgroups, homomorphic images and joins of convex $\ell$-subgroups.
We can analogously define a class $\vty{K}$ of ($\ut$-cyclic) residuated lattices to be a \emph{torsion class} if $\vty{K}$ is closed under convex subalgebras, homomorphic images and joins of convex subalgebras in the following sense: if $\m L$ is a residuated lattice (not necessarily in $\vty{K}$), then the join in $\cs{L}$ of any family of convex subalgebras of $\m L$ that all belong to $\vty{K}$ again belongs to $\vty{K}$.

Lemma \ref{lemma<<Hamiltonian} implies that the class $\mathcal{H}am\mathcal{RL}$ of all Hamiltonian $\ut$-cyclic residuated lattices is closed with respect to convex subalgebras and homomorphic images. However,  
the simple example below shows that, in contrast to $\ell$-groups, $\mathcal{H}am\mathcal{RL}$ is not closed with respect to joins of convex subalgebras, and hence it is not a torsion class.

\begin{example}[Cf. \cite{vA02}, Example 2]
Let $\m L$ be the integral residuated lattice given as follows: 
\begin{center}
\begin{minipage}{5cm}
\includegraphics[scale=0.2]{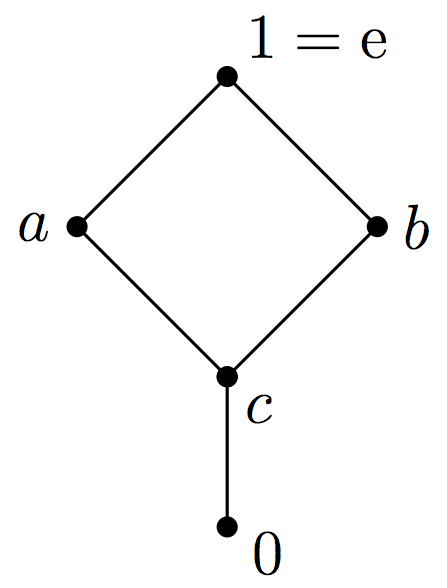}
\end{minipage}
\qquad
\begin{minipage}{4cm}
\begin{tabular}{c|c c c c c c}
  $\cdot$ & $0$ & $a$ & $b$ & $c$ & $\ut$  \\ \hline
         $0$ & $0$ & $0$ & $0$ & $0$ &$0$  \\
         $a$ & $0$ & $a$ & $0$ & $0$ &$a$ \\
         $b$ & $0$ & $c$ & $b$ & $c$ &$b$ \\
         $c$ & $0$ & $c$ & $0$ & $0$ &$c$ \\ 
         $\ut$ & $0$ & $a$ & $b$ & $c$ & $\ut$ 
\end{tabular}
\end{minipage}
\end{center}
Obviously, $\{a,\ut\}$ and $\{b,\ut\}$ are (domains of) Hamiltonian convex subalgebras of $\m L$, but they are not normal in $\m L$ because $\lambda_b(a)=b\ld ab=b\ld 0=0$ and $\rho_a(b)=ab\rd a=0\rd a=0$. Thus $\m L$, which is the join of the two convex subalgebras, itself is not Hamiltonian.
\end{example}

The above example also shows that semilinear $\ut$-cyclic residuated lattices cannot be characterized by the identity $x^2 \jn y^2 \eq (x\jn y)^2$ which characterizes semilinear (representable) $\ell$-groups (e.g. \cite{Gla99}). Indeed, $\m L$ satisfies this identity, though it is not semilinear because the polars $a^\perp=\{b,\ut\}$ and $b^\perp=\{a,\ut\}$ are not normal.

\section{GMV-algebras}\label{section<<gmvalgebras}

The results of the preceding sections demonstrate that lattices of convex subalgebras of $\ut$-cyclic residuated lattices satisfying either prelinearity law bear striking similarities with those of $\ell$-groups. The following question then arises naturally.

\begin{problem}\label{PROBLEM}
Let $\m L$ be an $\ut$-cyclic residuated lattice that satisfies either prelinearity law. Does there exist an $\ell$-group $\m G$ with the property that the lattice $\cs{\m G}$ of its convex $\ell$-subgroups is isomorphic to the lattice $\cs{\m L}$ of convex subalgebras of $\m L$?
\end{problem}  

The main result of this section is Theorem \ref{thm<<gmvalgebras-lgroups}, which asserts that the problem has an affirmative answer when $\m L$ is a GMV-algebra. Recall that a residuated lattice $\m L$ is called
a \emph{GMV-algebra} (generalized MV-algebra) (\cite{GT05}) if $\m L$ satisfies the quasi-identities
$x\leq y$ $\Rightarrow$ $x\rd (y\ld x) \eq y \eq (x\rd y)\ld x$, or equivalently, the identities
$x\rd ((y\ld x)\mt\ut) \eq x\jn y \eq ((x\rd y)\mt\ut)\ld x$.

A larger class is that of GBL-algebras. A \emph{GBL-algebra} (generalized BL-algebra) (\cite{GT05}) if it is \emph{divisible}, i.e., $\m L$ satisfies the quasi-identities 
$x\leq y$ $\Rightarrow$ $(x\rd y) y \eq x \eq y (y\ld x)$, or equivalently, the identities 
$((x\rd y)\mt\ut) y \eq x\mt y \eq y ((y\ld x)\mt\ut)$.

Every GMV-algebra is a GBL-algebra, and all GBL-algebras are $\ut$-cyclic and have distributive lattice reducts. Bounded GMV-algebras are known as \emph{pseudo-MV-algebras}, and the duals of GBL-algebras can be found in the literature under the name \emph{DR$\ell$-monoids} (see, for example, \cite{Kuhr03a}, \cite{Kuhr03b} or \cite{Kuhr05}).

The fundamental theorem about GBL-algebras (\cite{GT05}, Theorem~5.2) states that every GBL-algebra $\m L$ is isomorphic to a direct product of an $\ell$-group and an integral GBL-algebra. 
Specifically, $\m L$ is the direct sum of its subalgebras $\m{G(L)}$ and $\m{I(L)}$, where the domain of the $\ell$-group $\m{G(L)}$ is the set $G(\m L)$ of invertible elements of $\m L$, and the domain of the integral GBL-algebra $\m{I(L)}$ is the set $I(\m L)=\{a\in L \colon a\ld\ut=\ut=\ut\rd a\}$ of \emph{integral} elements of $\m L$.
This result, in the setting of DR$\ell$-monoids, was independently proved by T.~Kov\'{a}\v{r} in his unpublished thesis ``A~general theory of dually residuated lattice ordered monoids'' (Palack\'{y} University, Olomouc, 1996).

If $\m L$ is a GMV-algebra, then its integral part $\m{I(L)}$ is of the form $\m H^-_\gamma$ where $\m H^-$ is the negative cone of an $\ell$-group and $\gamma$ is a \emph{nucleus} on it. That is, $\gamma$ is a closure operator satisfying $\gamma(x)\gamma(y)\leq\gamma(xy)$ for all $x,y\in H^-$. The domain of $\m H^-_\gamma$ is the set $H^-_\gamma=\gamma(H^-)$ and the operations of $\m H^-_\gamma$ are the restrictions to $H^-_\gamma$ of the corresponding operations of $\m H^-$, except that multiplication on $H^-_\gamma$ is defined by $x\circ_\gamma y=\gamma(xy)$. Moreover, $H^-_\gamma$ is a lattice filter in $\m H^-$, and the $\ell$-group $\m H$ can be constructed in such a way that $H^-_\gamma$ generates its negative cone as a monoid. This correspondence extends to a categorical equivalence that generalizes that between MV-algebras and unital Abelian $\ell$-groups \cite{Mun86}, as well as the categorical equivalence between pseudo-MV-algebras and unital $\ell$-groups \cite{Dvu02}. For the details, see \cite{GT05}, Theorem 3.12.

\begin{theorem}\label{thm<<gmvalgebras-lgroups}
The lattice of (normal) convex subalgebras of any GMV-algebra is isomorphic to the lattice of (normal) convex $\ell$-subgroups of some $\ell$-group.
\end{theorem}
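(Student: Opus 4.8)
The plan is to reduce the statement, via the structure theory of GBL-algebras and the good behaviour of convex subalgebras under direct products, to the case of an integral GMV-algebra, and then to treat that case through the nucleus representation together with the classical correspondence between prefilters of a negative cone and convex $\ell$-subgroups of the enveloping $\ell$-group. The first ingredient is a lemma, valid for arbitrary $\ut$-cyclic residuated lattices $\m A$ and $\m B$: the map $C\mapsto(\pi_{\m A}(C),\pi_{\m B}(C))$ is an isomorphism $\cs{\m A\times\m B}\cong\cs{\m A}\times\cs{\m B}$, which restricts to an isomorphism $\nc{\m A\times\m B}\cong\nc{\m A}\times\nc{\m B}$. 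The one point needing argument is that every $C\in\cs{\m A\times\m B}$ splits as $C_{\m A}\times C_{\m B}$: if $(a,b)\in C$ then $(\abs a,\abs b)\in C$ by Lemma~\ref{absvalue}, and since $(\abs a,\abs b)\le(\abs a,\ut)\le\ut$ with both ends in $C$, convexity yields $(\abs a,\ut)\in C$ and, symmetrically, $(\ut,\abs b)\in C$; so $(a,b)$ already lies in $\pi_{\m A}(C)\times\pi_{\m B}(C)\subseteq C$. Preservation of joins follows from Corollary~\ref{CS1} (one checks $\C[(a,b)]=\C[a]\times\C[b]$) together with algebraicity, and preservation of normality is immediate from Lemma~\ref{normallm1}, as $\lam_{(u,v)}$ and $\rho_{(u,v)}$ act componentwise.

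Now let $\m L$ be a GMV-algebra. By the fundamental theorem for GBL-algebras, $\m L\cong\m{G(L)}\times\m{I(L)}$ with $\m{G(L)}$ an $\ell$-group and $\m{I(L)}$ an integral GMV-algebra, and by \cite{GT05}, Theorem~3.12, $\m{I(L)}=\m H^-_\gamma$ for an $\ell$-group $\m H$ and a nucleus $\gamma$ on $\m H^-$ whose image $H^-_\gamma$ is a lattice filter of $\m H^-$ that generates $\m H^-$ as a monoid. Since $\m{G(L)}\times\m H$ is an $\ell$-group, the lemma reduces everything to proving $\cs{\m H^-_\gamma}\cong\cs{\m H}$ and $\nc{\m H^-_\gamma}\cong\nc{\m H}$. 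For the first of these I would produce mutually inverse, inclusion-preserving maps between the prefilters (equivalently, by integrality, the convex subalgebras) of $\m H^-_\gamma$ and the prefilters of $\m H^-$: send a prefilter $P$ of $\m H^-_\gamma$ to the prefilter of $\m H^-$ it generates, and a prefilter $F$ of $\m H^-$ to $F\cap H^-_\gamma$. That $F\cap H^-_\gamma$ is a prefilter uses $a\circ_\gamma b=\gamma(ab)\ge ab$ and upward closure of $F$; that the maps are inverse to one another rests on two integral-world facts — $p_1\circ_\gamma\cdots\circ_\gamma p_k=\gamma(p_1\cdots p_k)$ for $p_i\in H^-_\gamma$ (a consequence of the nucleus identities) and the fact that each element of $\m H^-$ is a product of elements of $H^-_\gamma$, each of which exceeds it (integrality and the generation hypothesis) — so that nothing below a member of a prefilter escapes it. Composing with the standard lattice isomorphism between prefilters of $\m H^-$ and convex $\ell$-subgroups of $\m H$ (essentially Corollary~\ref{CS6}, specialised to $\ell$-groups) gives $\cs{\m H^-_\gamma}\cong\cs{\m H}$, and with the lemma this settles the convex-subalgebra half of the theorem.

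The normal case is the step I expect to be the main obstacle: one must verify that the correspondence just built carries normal convex subalgebras to normal convex subalgebras. The natural tool is the identity, valid for $u,x\in H^-_\gamma$, that conjugation computed in $\m H^-_\gamma$ is the $\gamma$-closure of conjugation computed in $\m H$ — concretely $\lam_u(x)=\gamma(u^{-1}xu)$ and $\rho_u(x)=\gamma(uxu^{-1})$, which follows from $\gamma(a)\gamma(b)\le\gamma(ab)$ together with the fact that residuals landing in the image of $\gamma$ stay there. The difficulty is exactly that the $\gamma$-closure is an over-approximation, so normality of $P$ inside $\m H^-_\gamma$ does not on its face force the genuine $\m H$-conjugates of the elements of $P$ back into the prefilter $P$ generates; a direct argument would have to exploit the specific shape of GMV-nuclei. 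The cleanest route I see is instead to invoke the categorical equivalence of \cite{GT05}, Theorem~3.12: an equivalence of varieties induces an isomorphism of congruence lattices, so $\cng(\m{I(L)})\cong\cng(\m H)$, every $\ell$-group congruence of $\m H$ being compatible with the GMV-nucleus $\gamma$. Rewriting this via Lemma~\ref{idealvar} as $\nc{\m{I(L)}}\cong\nc{\m H}$, and feeding it back through the direct-product lemma, completes the proof.
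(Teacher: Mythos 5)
Your proposal is correct and follows essentially the same route as the paper: decompose $\m L$ as $\m{G(L)}\times\m H^-_\gamma$ via \cite{GT05}, reduce through the direct-product lemma, set up the mutually inverse maps $P\mapsto\C[P]$ and $X\mapsto X\cap H^-_\gamma$ for the integral part, and settle normality by appealing to the categorical equivalence of \cite{GT05}, Theorem~3.12. The only difference is cosmetic: the paper deploys that equivalence locally, lifting the quotient map $\m M\to\m M/X$ to an $\ell$-group homomorphism $\bar{\nu}$ and verifying $\ker(\bar{\nu})=\C[X]$, rather than asserting a global isomorphism of congruence lattices, which lets it sidestep the (true but unproved in your sketch) claim that every $\ell$-ideal of $\m H$ is compatible with $\gamma$.
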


\begin{proof}
Let $\m L$ be a GMV-algebra. By \cite{GT05}, $\m L \cong \m G \times \m H^-_\gamma$ where $\m G,\m H$ are $\ell$-groups and $\gamma$ is a nucleus on the negative cone of $\m H$ such that $\lan H^-_\gamma\ran=H^-$.
It is easy to show that the lattice of (normal) convex subalgebras of the direct product of two $\ut$-cyclic residuated lattices is isomorphic to the direct product of their lattices of (normal) convex subalgebras. Hence, it will suffice to prove that the lattice of (normal) convex subalgebras of $\m H^-_\gamma$ is isomorphic to the lattice of (normal) convex $\ell$-subgroups of $\m H$.

Let $\m M = \m H^-_\gamma$. First, let the map $\alpha\colon\cs{\m M}\to\cs{\m H}$ be defined by $\alpha(X)=\C[X]$.
Then $M\cap\C[X]= X$ for every $X\in\cs{M}$. Indeed, if $y\in M\cap\C[X]$, then $\ut\geq y\geq x_1\dots x_n$ for some $x_1,\dots,x_n\in X$. Hence $y=\gamma(y)\geq \gamma(x_1\dots x_n)=x_1 \circ_\gamma \dots \circ_\gamma x_n \in X$, which yields $y\in X$. The converse inclusion is trivial.
Second, let $\beta\colon\cs{\m H}\to\cs{\m M}$ be defined by $\beta(X)=X\cap M$. Then $\beta$ is correctly defined and for every $X\in\cs{H}$ we have $\C[X\cap M]=X$. Indeed, if $x\in X$, then $\abs{x}=a_1\dots a_n$ for some $a_1,\dots,a_n\in M=H^-_\gamma$, since $\lan M\ran=H^-$. We have $\abs{x}\leq a_i\leq\ut$ for each $i$, whence $a_i\in X\cap M$ for each $i$, and so $x\in\C[X\cap M]$. Again, the converse inclusion is trivial.

Since both $\alpha$ and $\beta$ are order-preserving, it follows that they are mutually inverse isomorphisms between the lattices $\cs{M}$ and $\cs{H}$. Also, it is obvious that $\beta(X)\in\nc{M}$ whenever $X\in\nc{H}$.
That $X\in\nc{M}$ implies $\alpha(X)\in\nc{H}$ follows from the fact that, by \cite{GT05}, the category of integral GMV-algebras with homomorphisms is equivalent to the category whose objects are pairs $\lan \m K,\delta\ran$ where $\m K$ is an $\ell$-group and $\delta$ is a nucleus on its negative cone such that $\lan K^-_\delta\ran = K^-$, and whose morphisms 
$\varphi\colon \lan \m K,\delta\ran \to \lan \m K',\delta'\ran$
are homomorphisms $\varphi\colon \m K \to \m K'$
satisfying 
$\varphi{\upharpoonright}_{K^-} \circ \delta = \delta' \circ \varphi {\upharpoonright}_{K^-}$.
Indeed, given $X\in\nc{M}$, let $\lan\m K,\delta\ran$ be the object corresponding to the integral GMV-algebra $\m M/X$, i.e., $\m M/X=\m K^-_\delta$.
Then the natural homomorphism $\nu$ from $\m M$ onto $\m M/X$ can be lifted to a unique morphism $\bar{\nu}\colon\lan\m H,\gamma\ran \to \lan\m K,\delta\ran$. 
If $x\in\ker(\bar{\nu})$, then also $\abs{x}\in\ker(\bar{\nu})$. Since $\lan M\ran=H^-$, there exist $a_1,\dots,a_n\in M$ such that $\abs{x}=a_1\dots a_n$. But $\abs{x}\leq a_i\leq\ut$ yields $a_i\in\ker(\bar{\nu})$, so $\nu(a_i)=[a_i]_X=[\ut]_X=X$ and $a_i\in X$, for each $i$. Thus $x\in\C[X]$.
Conversely, if $x\in\C[X]$, then $\abs{x}\geq y_1\dots y_n$ for some $y_1,\dots,y_n\in X$.
Then $[\ut]_X \geq \bar{\nu}(\abs{x}) \geq \nu(y_1)\dots\nu(y_n)=[\ut]_X$, showing $\bar{\nu}(\abs{x})=[\ut]_X$ and $x\in\ker(\bar{\nu})$.
Thus $\ker(\bar{\nu})$ is just $\alpha(X)=\C[X]$, hence $\alpha(X)\in\nc{H}$.
\end{proof}

There are two other cases for which Problem \ref{PROBLEM} can be answered in the affirmative.  The first is an instance of a lattice-theoretic result in \cite{Mart73}.
As in Section \ref{polarsandprimes}, let $\m A$ be an algebraic distributive lattice with bounds $\bot$ and $\top$, and let $\K(\m A)$ denote the set of compact elements of $\m A$. For any $a\in A$,  the pseudocomplement of $a$ is denoted by $\neg a$.
It is shown in \cite{Mart73} that  if $\m A$ satisfies the equivalent properties (i) and (ii) below, then it is isomorphic to the lattice of $\ell$-ideals (= normal convex $\ell$-subgroups) of some hyper-Archimedean\footnote{An $\ell$-group is called \emph{hyper-Archimedean} if all its homomorphic images are Archimedean. In particular, such an $\ell$-group is Abelian.} $\ell$-group:
\begin{enumerate}
\item[(i)] $c\jn\neg c=\top$ for all $c\in\K(\m A)$;
\item[(ii)] $\K(\m A)$ is a sublattice of $\m A$ and the meet-irreducible elements $p\in A\setminus\{\top\}$ form an antichain. 
\end{enumerate}

Now, by Theorem \ref{CS8} and Lemma \ref{polars}, and since $a^\perp=\C[a]^\perp$ for any $a$, we get:

\begin{theorem}
Let $\m L$ be an \ut-cyclic residuated lattice satisfying either prelinearity law. If $\C[a]\jn a^\perp = L$ for every $a\in L$, or equivalently, if the proper prime convex subalgebras of $\m L$ form an antichain with respect to set-inclusion, then $\cs L$ is isomorphic to the lattice of $\ell$-ideals of some (hyper-Archimedean) $\ell$-group.
\end{theorem}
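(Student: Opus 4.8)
The plan is to apply, essentially verbatim, the lattice-theoretic theorem of Martinez \cite{Mart73} quoted just above, with $\m A$ taken to be the lattice $\cs{L}$ of convex subalgebras of $\m L$. Thus the proof reduces to three bookkeeping tasks: check that $\cs{L}$ satisfies the standing hypotheses of that result; verify that Martinez's property (i) for $\cs{L}$ is precisely the displayed assumption $\C[a]\jn a^\perp=L$; and then, using the equivalence of his (i) and (ii), read off that this is the same as the antichain condition on proper primes. (The prelinearity hypothesis appears to play no role in the argument below, which uses only \ut-cyclicity, entering through Theorem~\ref{CS8} and Lemma~\ref{polars}; it is retained only to match the setting of Problem~\ref{PROBLEM}.)

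First I would invoke Theorem~\ref{CS8}: $\cs{L}$ is an algebraic distributive lattice with bottom element $\{\ut\}$ and top element $L$, and its poset $\K(\cs{L})$ of compact elements -- the principal convex subalgebras $\C[a]$, $a\in L$ -- is a sublattice. This is exactly what Martinez assumes of $\m A$, and it already supplies the ``$\K(\m A)$ is a sublattice'' half of his condition (ii). Next I would identify the pseudocomplement: in $\cs{L}$ the least element is $\{\ut\}$, and by Lemma~\ref{polars} the relative pseudocomplement $X\ra\{\ut\}$ is the polar $X^\perp$, while $X^\perp=\C[X]^\perp$ by the remark following that lemma; hence for a compact element $c=\C[a]$ one has $\neg c=\C[a]^\perp=a^\perp$. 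Therefore Martinez's condition (i), ``$c\jn\neg c=\top$ for all $c\in\K(\m A)$'', becomes here ``$\C[a]\jn a^\perp=L$ for all $a\in L$'', which is precisely the hypothesis. The theorem of \cite{Mart73} then yields an $\ell$-group $\m G$, which may be chosen hyper-Archimedean, with $\cs{L}$ isomorphic to the lattice of $\ell$-ideals of $\m G$; and since a hyper-Archimedean $\ell$-group is Abelian (cf. the footnote above), every convex $\ell$-subgroup of $\m G$ is an $\ell$-ideal, so in fact $\cs{L}\cong\cs{G}$, the full lattice of convex $\ell$-subgroups of $\m G$ -- giving the affirmative answer to Problem~\ref{PROBLEM} for such $\m L$.

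It remains to justify the ``or equivalently'' in the statement. Since the meet-irreducible elements of $\cs{L}$ are, by definition, exactly the prime convex subalgebras of $\m L$, and since $\top=L$ is the improper one, the meet-irreducibles of $\cs{L}\setminus\{L\}$ are exactly the proper prime convex subalgebras; moreover the ``sublattice'' clause of Martinez's (ii) holds automatically by Theorem~\ref{CS8}. Hence, by the equivalence of (i) and (ii) in \cite{Mart73}, property (i) for $\cs{L}$ -- i.e.\ $\C[a]\jn a^\perp=L$ for every $a$ -- holds if and only if the proper prime convex subalgebras of $\m L$ form an antichain under set-inclusion, as claimed. I do not expect a genuine obstacle here: the whole argument is a translation of \cite{Mart73} through the dictionary ``compact element $=$ principal convex subalgebra'', ``pseudocomplement $=$ polar'', ``meet-irreducible $=$ prime convex subalgebra'', ``$\top=L$''; the only care needed is to apply this dictionary consistently, so that both forms of the hypothesis and the conclusion transfer exactly -- all the real work having already been done in Theorem~\ref{CS8}, Lemma~\ref{polars}, and \cite{Mart73}.
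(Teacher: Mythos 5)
Your proposal is correct and is exactly the paper's argument: the paper states this theorem as an immediate consequence of Theorem~\ref{CS8}, Lemma~\ref{polars}, the identity $a^\perp=\C[a]^\perp$, and the quoted result of \cite{Mart73}, via precisely the dictionary you describe. Your side observation that the prelinearity hypothesis is not actually used (only $\ut$-cyclicity enters, through Theorem~\ref{CS8} and Lemma~\ref{polars}) is also accurate; the hypothesis is carried along only to place the result within the setting of Problem~\ref{PROBLEM}.
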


A second situation that provides an affirmative answer to Problem \ref{PROBLEM} is based on another lattice-theoretic result concerning finite-valued algebraic distributive lattices. Retaining the notation of the preceding two paragraphs, a \emph{value} for $c\in\K(\m A)$  is  a completely meet-irreducible element of $\m A$ that is maximal with respect to not exceeding $c$. 
$\m A$ is called \emph{finite-valued} if each $c\in\K(\m A)$ has finitely many values. It is proved in \cite{Con65}, and exemplified in \cite{HT94}, that any finite-valued algebraic distributive lattice whose compact elements form a relatively normal lattice (refer to Section \ref{polarsandprimes}) is isomorphic to the lattice of $\ell$-ideals of some Abelian $\ell$-group. The last result of this section is an immediate consequence of these facts and Lemma \ref{lemma<<primes} or Proposition \ref{relatnormal}.

\begin{theorem}
Let $\m L$ be an \ut-cyclic residuated lattice satisfying either prelinearity law. If the lattice $\cs{L}$ of convex subalgebras of $\m L$ is finite-valued, then $\cs L$ is isomorphic to the lattice of $\ell$-ideals of some Abelian $\ell$-group.
\end{theorem}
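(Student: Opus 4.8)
The plan is to reduce the statement, via the machinery already built, to the lattice-theoretic representation theorem quoted just above it: by \cite{Con65} (with the explicit construction carried out in \cite{HT94}), every finite-valued algebraic distributive lattice whose compact elements form a relatively normal lattice is isomorphic to the lattice of $\ell$-ideals of some Abelian $\ell$-group. Thus the entire task is to check that $\cs{L}$ meets the three hypotheses of that theorem; there is no new combinatorics to do.

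First I would record that $\cs{L}$ is an algebraic distributive lattice whose poset $\K(\cs{L})$ of compact elements is a sublattice --- this is exactly Theorem~\ref{CS8}, valid for every $\ut$-cyclic residuated lattice. Next, using the standing assumption that $\m L$ satisfies one of the prelinearity laws, I would invoke Proposition~\ref{relatnormal} to conclude that $\K(\cs{L})$ is relatively normal; equivalently, one reads this off Lemma~\ref{lemma<<primes}, since a prime convex subalgebra is characterized there by the chain condition~(5), so the poset of prime convex subalgebras of $\m L$ is a root system, which --- together with distributivity of $\cs{L}$ and the fact that $\K(\cs{L})$ is a sublattice --- is precisely relative normality of $\K(\cs{L})$. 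Finally, finite-valuedness of $\cs{L}$ is literally the hypothesis of the present theorem, so nothing remains; applying the cited result yields an Abelian $\ell$-group $\m G$ whose lattice of $\ell$-ideals is isomorphic to $\cs{L}$, which is the claim.

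The only ``obstacle'' is bookkeeping: all the real content is imported through \cite{Con65}. If one wanted a self-contained argument, the hard part would be reconstructing that representation theorem itself --- the Conrad-style functor from finite-valued relatively normal lattices to Abelian $\ell$-groups --- which lies well outside the scope of this section and is here used, legitimately, as a black box; the verification above is routine given Sections~\ref{convexsubs} and~\ref{polarsandprimes}.
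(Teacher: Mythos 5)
Your proposal is correct and follows essentially the same route as the paper: the paper likewise treats the result as an immediate consequence of the cited representation theorem of \cite{Con65} (as developed in \cite{HT94}), combined with Theorem~\ref{CS8} and the relative normality of $\K(\cs{L})$ from Lemma~\ref{lemma<<primes} or Proposition~\ref{relatnormal}. Nothing further is needed.
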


\section{Concluding remarks}\label{section<<concludingremarks}

The preceding sections have provided ample evidence of the key roles $\ell$-groups and the concept of normality play in the study of $\ut$-cyclic residuated lattices. Future developments require a more detailed study of the lattice of convex subalgebras of such algebras. For example, an ongoing project by three of the authors of this article (M. Botur, J. K\"{u}hr, and C. Tsinakis), is devoted to the study of normal-valued $\ut$-cyclic residuated lattices. The term \emph{normal-valued}, borrowed from the theory of $\ell$-groups, refers to a residuated lattice whose completely meet-irreducible convex subalgebras are normal in their cover (in the lattice of convex subalgebras).
One of the early results of the project is that the class of all normal-valued $\ut$-cyclic residuated lattices that satisfy  one of the prelinearity laws is a variety. This generalizes the corresponding result for $\ell$-groups (\cite{Wol68}).

Another direction of research, in which normality plays a role, is connected with the  concept of a \emph{state.} A number of authors have tried to extend the standard concepts of Rie\v{c}an  and Bosbach states for arbitrary bounded residuated lattices.  Since completely simple\footnote{We use the term \emph{completely simple} for a residuated lattice whose only proper convex subalgebra is $\{\ut\}$.} normal-valued $\ut$-cyclic residuated lattices can be of arbitrarily large cardinality, it is unlikely that this line of research will produce useful algebraic information. 
A promising approach would be to introduce an alternative ``ranking function'' that would ``measure'' the relative position of the elements of a completely simple algebra. Further, the time is ripe to study states for non-bounded residuated lattices, in particular cancellative residuated lattices.

\section*{Acknowledgments}

M.B. and J.K. have been supported by the bilateral project ``New perspectives on the residuated posets'' of the Austrian Science Fund (FWF): project I 1923-N25, the Czech Science Foundation (GA\v{C}R): project 15-34697L, and by the Palack\'{y} University projects IGA PrF 2014016 and IGA PrF 2015010 ``Mathematical structures.''

\begin{bibdiv}
\begin{biblist}
\bib{ACM89}{article}{
  author={Anderson, M.},
  author={Conrad, P.},
  author={Martinez, J.},
  title={The lattice of convex $\ell $-subgroups of a lattice-ordered group},
  book={ title={Lattice-Ordered Groups}, editor={Glass, A.M.W.}, editor={Holland, W.C.}, publisher={D. Reidel, Dordrecht}, },
  date={1989},
  pages={105-127},
}

\bib{AF88}{book}{
  author={Anderson, M.},
  author={Feil, T.},
  title={ Lattice Ordered Groups, An Introduction},
  publisher={D. Reidel Publishing Company},
  date={1988},
}

\bib{BCGJT03}{article}{
  author={Bahls, P.},
  author={Cole, J.},
  author={Galatos, N.},
  author={Jipsen, P.},
  author={Tsinakis, C.},
  title={Cancellative Residuated Lattices},
  journal={Algebra Universalis},
  volume={50},
  number={1},
  date={2003},
  pages={83-106},
}

\bib{BT03}{article}{
  author={Blount, K.},
  author={Tsinakis, C.},
  title={The structure of residuated lattices},
  journal={International Journal of Algebra and Computation},
  volume={13},
  number={4},
  date={2003},
  pages={437--461},
}

\bib{Con60}{article}{
  author={Conrad, P.},
  title={The structure of a lattice-ordered group with a finite number of disjoint elements},
  journal={Michigan Math. J},
  volume={7},
  date={1960},
  pages={171-180},
}

\bib{Con61}{article}{
  author={Conrad, P.},
  title={Some structure theorems for lattice-ordered groups},
  journal={Trans. Amer. Math. Soc.},
  volume={99},
  date={1961},
  pages={212-240},
}

\bib{Con65}{article}{
  author={Conrad, P.},
  title={The lattice of all convex $\ell $-subgroups of a lattice-ordered group},
  journal={Czechoslovak Math. J.},
  volume={15},
  date={1965},
  pages={101-123},
}

\bib{Con68}{article}{
  author={Conrad, P.},
  title={Lex-subgroups of lattice-ordered groups},
  journal={Czechoslovak Math. J.},
  volume={18},
  date={1968},
  pages={86-103},
}

\bib{Con70}{book}{
  author={Conrad, P.},
  title={ Lattice Ordered Groups, An Introduction},
  publisher={Tulane University Lecture Notes},
  date={1970},
}

\bib{Dar95}{book}{
  author={Darnel, M.R.},
  title={Theory of Lattice-Ordered Groups},
  edition={},
  publisher={Marcel Dekker},
  place={},
  date={1995},
  pages={},
  isbn={},
}

\bib{Dvu01}{article}{
  author={Dvure\v {c}enskij, A.},
  title={States on pseudo MV-algebras},
  journal={Studia Logica},
  volume={68},
  date={2001},
  pages={301--327},
}

\bib{Dvu02}{article}{
  author={Dvure\v {c}enskij, A.},
  title={Pseudo MV-algebras are intervals in $\ell $-groups},
  journal={ J. Austral. Math. Soc.},
  volume={70},
  date={2002},
  pages={427-445},
}

\bib{GJKO07}{book}{
  author={Galatos, N.},
  author={Jipsen, P.},
  author={Kowalski, T.},
  author={Ono, H.},
  title={Residuated Lattices: An Algebraic Glimpse at Substructural Logics},
  series={Studies in Logic and the Foundations of Mathematics},
  number={151},
  publisher={Elsevier},
  address={Amsterdam},
  date={2007},
}

\bib{GT05}{article}{
  author={Galatos, N.},
  author={Tsinakis, C.},
  title={Generalized MV-algebras},
  journal={J. Algebra},
  volume={283},
  date={2005},
  pages={254-291},
}

\bib{Gla99}{book}{
  author={Glass, A.M.W.},
  title={Partially ordered groups},
  series={Series in Algebra},
  publisher={World Scientific},
  date={1999},
}

\bib{HT94}{article}{
  author={Hart, J.B.},
  author={Tsinakis, C.},
  title={Decompositions of relatively normal lattices},
  journal={Trans. Amer. Math. Soc.},
  volume={341},
  date={1994},
  number={},
  pages={519--548},
}

\bib{JM06}{article}{
  author={Jipsen, P.},
  author={Montagna, F.},
  title={On the structure of generalized BL-algebras},
  journal={Algebra Universalis},
  volume={55},
  date={2006},
  pages={226-237},
}

\bib{JT02}{article}{
  author={Jipsen, P.},
  author={Tsinakis, C.},
  title={A survey of residuated lattices},
  book={ title={Ordered Algebraic Structures}, editor={Martinez, Jorge}, publisher={Kluwer}, address={Dordrecht}, },
  date={2002},
  pages={19--56},
}

\bib{Kop75}{article}{
  author={Kopytov, V.M.},
  title={Lattice-ordered locally nilpotent groups},
  journal={Algebra and Logic},
  volume={14},
  number={},
  date={1975},
  pages={407--413},
}

\bib{Kuhr03a}{article}{
  author={K\"{u}hr, J.},
  title={Prime ideals and polars in DR$\ell $-monoids and pseudo BL-algebras},
  journal={Math. Slovaca},
  volume={53},
  pages={233--246},
  date={2003},
}

\bib{Kuhr03b}{article}{
  author={K\"{u}hr, J.},
  title={Representable dually residuated lattice-ordered monoids},
  journal={Discussiones Math., General Algebra Appl.},
  volume={23},
  pages={115--123},
  date={2003},
}

\bib{Kuhr03c}{article}{
  author={K\"{u}hr, J.},
  title={Pseudo BL-algebras and DRl-monoids},
  journal={Math. Bohem.},
  volume={128},
  pages={199--208},
  date={2003},
}

\bib{Kuhr05}{article}{
  author={K\"{u}hr, J.},
  title={Ideals of noncommutative DR$\ell $-monoids},
  journal={Czechoslovak Math. J.},
  volume={55},
  pages={97--111},
  date={2005},
}

\bib{Kur07}{article}{
  author={K\"{u}hr, J.},
  title={Representable pseudo-BCK-algebras and integral residuated lattices},
  journal={J. Algebra},
  volume={317},
  pages={354--364},
  date={2007},
}

\bib{Mart73}{article}{
  author={Martinez, J.},
  title={Archimedean lattices},
  journal={Algebra Univers.},
  volume={3},
  date={1973},
  pages={247--260},
}

\bib{Mart72}{article}{
  author={Martinez, J.},
  title={Free products in varieties of lattice-ordered groups},
  journal={Chechoslovak Math. J.},
  volume={22},
  number={97},
  date={1972},
  pages={535--553},
}

\bib{MPT10}{article}{
  author={Metcalfe, G.},
  author={Paoli, F.},
  author={Tsinakis, C.},
  title={Ordered algebras and logic},
  book={ title={Uncertainty and Rationality}, editor={Hosni, H.}, editor={Montagna, F.}, publisher={Publications of the Scuola Normale Superiore di Pisa, Vol. 10},},
  date={2010},
  pages={1--85},
}

\bib{MT10}{article}{
  author={Montagna, F.},
  title={ Ordered groups with a conucleus},
  journal={Ann. Pure Appl. Logic},
  volume={141},
  date={2006},
  pages={148-179},
}

\bib{MT10}{article}{
  author={Montagna, F.},
  title={ Interpolation and Beth's property in many-valued logic: a semantic investigation},
  journal={J. Pure Appl. Algebra},
  volume={214},
  number={1},
  date={2010},
  pages={71-88},
}

\bib{Mon54}{article}{
  author={Monteiro, A.},
  title={ L'arithmetique des filtres et les espaces topologiques},
  journal={Segundo Symposium de Matematics-Villavicencio (Mendoza, Buenos Aires)},
  volume={},
  date={1954},
  pages={129-162},
}

\bib{Mon74}{article}{
  author={Monteiro, A.},
  title={ L'arithmetique des filtres et les espaces topologiques. I, II},
  journal={Notas Logica Mat.},
  volume={},
  date={1954},
  pages={29-30},
}

\bib{Mun86}{article}{
  author={Mundici, D.},
  title={Interpretations of AFC*-algebras in \L ukasiewicz sentential calculus},
  journal={J. Funct. Anal.},
  volume={65},
  date={1986},
  pages={15-63},
}

\bib{Rei83}{article}{
  author={Reilly, N.R.},
  title={Nilpotent, weakly abelian and Hamiltonian lattice ordered groups},
  journal={Czechoslovak Math. J.},
  volume={33},
  date={1983},
  pages={348--353},
}


\bib{ST95}{article}{
  author={Snodgrass, J.},
  author={Tsinakis, C.},
  title={The finite basis theorem for relatively normal lattices},
  journal={Algebra Universalis},
  volume={33},
  date={1995},
  pages={40- 67},
}

\bib{Tsi79}{thesis}{
  author={Tsinakis, C.},
  title={A unified treatment of certain aspects of the theories of lattice-ordered groups and semi prime rings via Brouwerian lattices and multiplicative lattices},
  type={Doctoral dissertation},
  organization={University of California at Berkeley},
  date={1979},
}

\bib{vA02}{article}{
  author={van Alten, C.J.},
  title={Representable biresiduated lattices},
  journal={J. Algebra},
  volume={247},
  date={2002},
  number={},
  pages={672--691},
}

\bib{Wil06}{book}{
  author={Wille, A.M.},
  title={Residuated Structures with Involution},
  publisher={Shaker Verlag},
  address={Aachen},
  date={2006},
}

\bib{Wol68}{article}{
  author={Wolfenstein, S.},
  title={Valeurs normales dans ub groupe r\'eticul\'e},
  journal={Atti Accad. Naz. Lincei Rend.},
  number={44},
  pages={337-342},
  date={1968},
}

\end{biblist}
\end{bibdiv}

\end{document}